\begin{document}


\theoremstyle{plain}
\newtheorem{theorem}{Theorem}[section]
\newtheorem{claim}{Claim}
\newtheorem{lemma}[theorem]{Lemma}
\newtheorem{proposition}[theorem]{Proposition}
\newtheorem{corollary}[theorem]{Corollary}
\newtheorem{remark}{Remark}

\theoremstyle{remark}
\newtheorem{example}[theorem]{Example} 	
\theoremstyle{definition}
\newtheorem{definition}{Definition}
\hfuzz5pt 


\newcommand{\gt}{\tilde{g}}
\newcommand{\R}{\mathbb{R}}
\newcommand{\Z}{\mathbb{Z}}
\newcommand{\T}{\mathbb{T}}
\newcommand{\N}{\mathbb{N}}
\newcommand{\Zt}{\mathbb{Z}^2}
\newcommand{\Zd}{\mathbb{Z}^d}
\newcommand{\Ztd}{0\mathbb{Z}^{2d}}
\newcommand{\Rt}{\R^2}
\newcommand{\Rtd}{\R^{2d}}
\newcommand{\Zp}{Z_p}
\newcommand{\al}{\alpha}
\newcommand{\be}{\beta}
\newcommand{\om}{\omega}
\newcommand{\Om}{\Omega}
\newcommand{\ga}{\gamma}
\newcommand{\Ga}{\Gamma}
\newcommand{\bga}{\boldsymbol\ga}
\newcommand{\bg}{\mathbf{g}}
\newcommand{\de}{\delta}
\newcommand{\la}{\lambda}
\newcommand{\La}{\Lambda}
\newcommand{\ala}{\la^\circ}
\newcommand{\aLa}{\La^\circ}
\newcommand{\nat}{\natural}
\newcommand{\G}{\mathcal{G}}
\newcommand{\bG}{\mathbf{G}}
\newcommand{\A}{\mathcal{A}}
\newcommand{\V}{\mathcal{V}}
\newcommand{\M}{\mathcal{M}}
\newcommand{\MV}{\mathcal{MV}}
\newcommand{\MP}{\mathcal{MP}}
\newcommand{\Sp}{\mathcal{S}}
\newcommand{\W}{\mathcal{W}}
\newcommand{\Hp}{\mathcal{H}}
\newcommand{\Mep}{M_\epsilon}
\newcommand{\Kep}{K_\epsilon}
\newcommand{\ka}{\kappa}
\newcommand{\cast}{\circledast}
\newcommand{\id}{\mbox{Id}}
\newcommand{\by}{\mathbf{Y}}
\newcommand{\bx}{\mathbf{X}}
\newcommand{\bz}{\mathbf{Z}}
\newcommand{\bn}{\mathbf{N}}
\newcommand{\bv}{\mathbf{v}}
\newcommand{\bu}{\mathbf{u}}
\newcommand{\bA}{\mathbf{A}}
\newcommand{\bC}{\mathbf{C}}
\newcommand{\bD}{\mathbf{D}}
\newcommand{\bh}{\mathbf{h}}
\newcommand{\bff}{\mathbf{f}}
\newcommand{\bH}{\mathbf{H}}
\newcommand{\bV}{\mathbf{V}}
\newcommand{\bX}{\mathbf{x}}
\newcommand{\byy}{\mathbf{y}}
\newcommand{\bZ}{\mathbf{z}}
\newcommand{\bt}{\mathbf{t}}
\newcommand{\bs}{\mathbf{\sigma}}
\newcommand{\bI}{\mathbf{I}}
\newcommand{\wpe}{w_{\psi_\epsilon}}

\newcommand{\lo}{{\ell^1}}
\newcommand{\Lo}{{L^1}}
\newcommand{\Lt}{{L^2}}
\newcommand{\SO}{{S_0}}
\newcommand{\SOc}{{S_{0,c}}}
\newcommand{\Lp}{{L^p}}
\newcommand{\Los}{{L^1_s}}
\newcommand{\WCl}{{W(C_0,\ell^1)}}
\newcommand{\lt}{{\ell^2}}

\newcommand{\conv}[2]{{#1}\,\ast\,{#2}}
\newcommand{\twc}[2]{{#1}\,\nat\,{#2}}
\newcommand{\mconv}[2]{{#1}\,\cast\,{#2}}
\newcommand{\set}[2]{\Big\{ \, #1 \, \Big| \, #2 \, \Big\}}
\newcommand{\inner}[2]{\langle #1,#2\rangle}
\newcommand{\innerbig}[2]{\big \langle #1,#2 \big \rangle}
\newcommand{\innerBig}[2]{\Big \langle #1,#2 \Big \rangle}
\newcommand{\dotp}[2]{ #1 \, \cdot \, #2}

\newcommand{\Zpd}{\Zp^d}
\newcommand{\I}{\mathcal{I}}
\newcommand{\J}{\mathcal{J}}
\newcommand{\Zq}{Z_q}
\newcommand{\Zqd}{Zq^d}
\newcommand{\Zak}{\mathcal{Z}_{a}}
\newcommand{\C}{\mathbb{C}}
\newcommand{\F}{\mathcal{F}}

\newcommand{\convL}[2]{{#1}\,\ast_L\,{#2}}
\newcommand{\abs}[1]{\lvert#1\rvert}
\newcommand{\absbig}[1]{\big\lvert#1\big\rvert}
\newcommand{\absBig}[1]{\Big\lvert#1\Big\rvert}
\newcommand{\scp}[1]{\langle#1\rangle}
\newcommand{\norm}[1]{\lVert#1\rVert}
\newcommand{\normmix}[1]{\lVert#1\rVert_{\ell^{2,1}}}
\newcommand{\normbig}[1]{\big\lVert#1\big\rVert}
\newcommand{\normBig}[1]{\Big\lVert#1\Big\rVert}

\newcommand{\Conv}{\mathop{\scalebox{3}{\raisebox{-0.2ex}{$\ast$}}}}
\newcommand{\ConvN}{\mathop{\scalebox{3.5}{\raisebox{-0.2ex}{$\ast$}}}}
\newcommand{\Convtext}{\mathop{\scalebox{2.5}{\raisebox{-0.2ex}{$\ast$}}}}

\oddsidemargin  -0.31in
 \evensidemargin -0.31in
 \topmargin 0truept
 \headheight 5truept
 \headsep 0truept
 \footskip 20truept
 \textheight 235truemm
 \textwidth 180truemm
 \columnsep 8truemm


\begin{frontmatter}

\title{Frames of translates for model sets}

\author{Ewa Matusiak}
\address{Department of Mathematics, University of Vienna, Austria}
\ead{ewa.matusiak@univie.ac.at}


\begin{abstract}
We study spanning properties of a family of functions translated along simple model sets. We characterize tight frame and dual frame generators for such irregular translates and we apply the results to Gabor systems. We use the connection between model sets and almost periodic functions and rely strongly on a Poisson summations formula for model sets to introduce the so-called bracket product, which then plays a crucial role in our approach. As a corollary to our main results we obtain a density statement for semi-regular Gabor frames.
\end{abstract}

\begin{keyword}
frames of translates  \sep model sets \sep almost periodic functions \sep Poisson summation formula
\end{keyword}
\end{frontmatter}


\section{Introduction}\label{sec:intro}
Frames of translates are an important class of frames that have a special structure. Here, generating functions $g_k$, where $k\in\mathcal{K}$ and $\mathcal{K}$ is a countable index set, are shifted along a regular set (lattice) $\La$ to create the analyzing family of elements, $\{g_k (t-\la):k\in\mathcal{K},\la\in\La\}$. These frames are central in approximation, sampling, Gabor and wavelet theory, and were investigated in the context of general properties of shift invariant spaces by a number of authors, including \cite{BDR94,RS95,B00}. In higher dimensions, translation invariant systems were investigated for example in \cite{L02,HLW02} and, more recently, on locally compact abelian groups in \cite{JL16} . The techniques rely on the Fourier analysis of periodic functions and the translation invariance of a lattice by shifts of its elements. 

For general irregular sets of translates, that is when $\La$ is not a lattice but some discrete relatively separated set, it is difficult to provide any constructive results as the tools to deal with such sets are missing. Certain extensions into irregular frames of translates were undertaken, for example in \cite{AG01} and \cite{BCHM11}. More recently in \cite{grrost17} the authors study nonuniform sampling in shift invariant spaces and construct semi-regular Gabor frames with respect to the class of totally positive functions. Their results are Beurling type results, expressed by means of density of the sampling sets.

In this article, we are interested in constructive results, that is results in terms of explicit conditions on a family $\{g_k (t-\la):k\in\mathcal{K},\la\in\La\}$ to form a frame of irregular translates. A class of irregular sets $\La$, so called model sets, possesses enough structure to enable us to formulate such conditions. Model sets were investigated recently by Matei and Meyer in the context of sampling and interpolation sets for the space of square integrable functions with compactly supported Fourier transform \cite{MaMe10, Me12, GL14}.  

We utilize the connection between model sets and almost periodic functions and use harmonic analysis of the latter to develop explicit conditions on $\{g_k (t-\la):k\in\mathcal{K},\la\in\La\}$ to be a tight frame for the space of square integrable functions. We rely strongly on Poisson summation formula for model sets to introduce the so called bracket product for model sets, in analogy to the bracket product for lattices introduced in \cite{BDR94}. One way to circumvent a problem of verifying if a given collection is a frame for its linear span is to search for a pair of dual frames instead. This approach opens for the possibility to specify in advance which properties, or structure we want for the dual frame. Since the irregular translates do not commute with the frame operator, the canonical dual frame is costly to compute and is not a frame of translates anymore. Therefore by searching for a pair of frames, we can ask for both of them to be frames of translates, generated from a given window by regular or irregular shifts. These kind of results, to our knowledge, are the first one in that direction. 

Almost periodic functions were recently investigated in the connection with Gabor frames in \cite{PU01,G04,BFG17}. As the space of almost periodic functions is non-separable, it can not admit countable frames, and the problem arises in which sense frame-type inequalities are still possible for norm estimation in this space \cite{G04,PU01,BFG17}. In \cite{BFG17} the authors also provide Gabor frames for a suitable separable subspaces of the space of almost periodic functions. We, on the other hand, use almost periodic functions as a tool to develop existence results for irregular frames for the space of square integrable functions.

The article is organized as follows. In Section~\ref{sec:not} we establish some notations and definitions that we will use throughout the article. We also shortly present main facts from the theory of almost periodic functions, we introduce model sets and point out some connections between the two. In Section~\ref{sec:bracket} we develop a technical tool, the bracket product, that we use in Section~\ref{sec:main} to characterize tight frames and  dual frames. We apply these results to the study of Gabor systems in Section~\ref{sec:gabor}.


\section{Preliminaries}\label{sec:not}

Let $T_\la$ denote the translation operator, $T_\la f(t) := f(t-\la)$ and $M_\la$ a modulation operator defined as $M_\la f(t) := e^{2\pi i \la \cdot t} f(t)$, where $\la\in\R^m$. Then $\F T_\la = M_{-\la} \F$ and $\F M_{\be} = T_{\be} \F$, where we define a Fourier transform as
\begin{equation*}
\F f (\om) = \widehat{f}(\om) = \int_{\R^m} f(t) e^{-2\pi i t \cdot \om}\, dt\,.
\end{equation*}
Moreover, $M_{\be} T_{\la} = e^{2\pi i \be\cdot \la} T_{\la} M_{\be}$.
By $\check{f}$ we denote the inverse Fourier transform. We define the convolution as $(\conv{f}{g})(x) = \int_{\R^m} f(t)g(x-t)\, dt$ and the involution $g^*(t) = \overline{g(-t)}$. We use the standard notation $\norm{f}_2$ for the norm of $f\in\Lt(\R^m)$, and $\inner{f}{g}$ for the usual inner product of $f,g\in \Lt(\R^m)$. For $1\leq p,q \leq \infty$, the Wiener amalgam space $W(L^p,\ell^q)$ denotes the space of functions $f$ such that
\begin{equation*}
\norm{f}_{p,q} := \left [ \sum_{k\in\Z^m} \norm{f\mathds{1}_k}_p^q \right ]^{1/q} < \infty\,,
\end{equation*}
where, $\mathds{1}_k$ is the characteristic function of the cube $[0,1]^m +k$, $k\in\Z^m$. Different partitions of $\R^m$ give equivalent Wiener amalgam norms. The space $W(L^\infty,\ell^1)$, which is a subspace of $L^1(\R^m)$ is called the {\it Wiener's algebra}, and it contains all bounded functions with compact support. It is therefore a dense subspace of $\Lt(\R^m)$.

Let $C_0(\R^m)$ be the space of all continuous functions that vanish at infinity. Then the closed subspace of $W(L^\infty,\ell^1)$ consisting of continuous functions is denoted by $W(C_0,\lo)$. Continuity of elements in $W(L^{\infty},\lo)$ allows for pointwise evaluations, and we have (see Proposition~$11.1.4$ in \cite{gr01}): if $F\in W(C_0,\lo)$, then $F|\La \in \lo(\La)$, for $\La$ any discrete relatively separated set in $\R^m$ with the norm estimate
\begin{equation}\label{eq:norm_estimate}
\sum_{\la\in\La} \abs{F(\la)} \leq \text{rel}(\La) \norm{F}_{\infty,1}\,.
\end{equation}
A discrete subset $\La$ of $\R^m$ is relatively separated if
\begin{equation*}
\text{rel}(\La) := \text{sup} \{ \# \{\La \cap B(x,1)\}:x\in\R^m\} < \infty\,,
\end{equation*}
where $B(x,1)=[0,1]^m+x$. 

Given a non-zero function $g\in\Lt(\R^m)$, the {\it short-time Fourier transform} of $f\in\Lt(\R^m)$ with respect to the window $g$, is defined as
\begin{equation*}
\V_g f (x,\om) := \int_{\R^m} f(t) \overline{g(t-x)} e^{-2\pi i \om \cdot (t-x)}\, dt = \inner{f}{T_x M_\om g}\,. 
\end{equation*}
The space $M^1(\R^m)$, known as Feichtinger's algebra, is then defined as
\begin{equation*}
M^1(\R^m) := \{ f\in\Lt(\R^m): \norm{\V_f f}_1 < \infty \}\,.
\end{equation*}
$M^1(\R^m)$ contains the Schwartz space $\mathcal{S}(\R^m)$ and it is dense in $L^p(\R^m)$, $1\leq p <\infty$. The following property of $M^1(\R^m)$ will be needed later.
\begin{proposition}\label{prop:STFT}
If $f,g\in M^1(\R^m)$, then $\V_g f \in W(C_0,\lo)(\R^{2m})$.
\end{proposition}
For the proof and more properties of $M^1(\R^m)$ we refer the reader to e.g \cite{fezi98} or \cite{gr01}.  We also mention the following tensor-product property of the short-time Fourier transform: for $\psi_1,\phi_1\in M^1(\R^n)$ and $\psi_2,\phi_2\in M^1(\R^{m-n})$
\begin{equation*}
\V_{\phi_1\otimes\phi_2}(\psi_1\otimes \psi_2)= \V_{\phi_1}\psi_1 \otimes \V_{\phi_2}\psi_2 \,.
\end{equation*}

Let $\mathcal{K}$ a countable index set. We are going to consider families of functions $T_\la g_k$, where $k\in \mathcal{K}$ and $\la\in\La$.  A collection $\{ T_\la g_k:k\in\mathcal{K},\la\in\La\}\subset \Lt(\R^m)$ is a {\it frame} for $\Lt(\R^m)$ if there exist constants $A_g,B_g>0$ such that
\begin{equation*}
A_g \norm{f}_2^2 \leq \sum_{k\in\mathcal{K}} \sum_{\la\in\La} \abs{\inner{f}{T_\la g_k}}^2 \leq B_g \norm{f}_2^2 \quad \mbox{for every $f\in\Lt(\R^m)$}\,,
\end{equation*}
The constants $A_g$ and $B_g$ are called lower and upper frame bounds, respectively. If $A_g=B_g$ then the frame is called a {\it tight frame}, and if $A_g=B_g=1$, a {\it normalized tight frame}. If $\{T_\la g_k:k\in\mathcal{K},\la\in\La\}$ satisfies the right inequality in the above formula, then it is called a {\it Bessel sequence}. 

Given a frame $\{ T_\la g_k:k\in\mathcal{K}, \la\in\La\}$ of $\Lt(\R^m)$ with lower and upper frame bounds $A_g$ and $B_g$, respectively, the {\it frame operator} $S_g^\La$, defined by
\begin{equation}\label{eq:frame operator}
S_g^{\La} f = \sum_{k\in\mathcal{K}} \sum_{\la\in\La} \inner{f}{T_\la g_k} T_\la g_k\,,
\end{equation}
is a bounded, invertible and positive mapping of $\Lt(\R^m)$ onto itself. This, in turn, provides the frame decomposition
\begin{equation}\label{eq:frame decomposition}
f = \sum_{k\in\mathcal{K}} \sum_{\la\in\La}  \inner{f}{\big (S_g^{\La} \big)^{-1} T_\la g_k} T_\la g_k\,, \quad \mbox{for all $f\in\Lt(\R^m)$.}
\end{equation}
The sequence $\{\big(S_g^\La \big )^{-1} T_\la g_k:k\in\mathcal{K}, \la\in\La\}$ is also a frame for $\Lt(\R^m)$, called the {\it canonical dual frame} of $\{ T_\la g_k:k\in\mathcal{K},\la\in\La\}$, and has upper and lower frame bounds $B_g^{-1}$ and $A_g^{-1}$, respectively. If the frame is tight, then $\big (S_g^\La \big )^{-1}= A_g^{-1} I$, where $I$ is the identity operator, and the frame decomposition becomes
\begin{equation*}
f = A_g^{-1}\,\sum_{k\in\mathcal{K}} \sum_{\la\in\La} \inner{f}{T_\la g_k} T_\la g_k\,,\quad \mbox{for all $f\in\Lt(\R^m)$}\,.
\end{equation*}
In order to use the representation \eqref{eq:frame decomposition} in practice, we need to be able to compute $\big (S_g^\La \big )^{-1}$. While the existence of $\big (S_g^\La \big )^{-1}$ is guaranteed by the frame condition, it is usually tedious to find this operator explicitly. Moreover, if $\La$ is not a lattice in $\R^m$, then the frame operator $S_g^{\La}$ does not commute with translations along $\La$, because $\La$ is not closed under translations of its elements. Indeed, let $\alpha\in\La$ and $f\in\Lt(\R^m)$, then
\begin{align*}
T_{-\alpha} \,S_g^{\La} \,T_\alpha f &= \sum_{k\in\mathcal{K}} \sum_{\la\in\La} \inner{T_\alpha f}{T_\la g_k} T_{-\alpha}T_\la g_k= \sum_{k\in\mathcal{K}} \sum_{\la\in\La}  \inner{f}{T_{\la-\alpha} g_k} T_{\la-\alpha} g_k \\
&= \sum_{k\in\mathcal{K}} \sum_{\la\in\La - \alpha} \inner{f}{T_\la g_k}T_\la g_k = S_g^{\La - \alpha} f \,,
\end{align*}
where $S_g^{\La-\alpha}$ is the operator associated to $\{ T_\la g_k:k\in\mathcal{K}, \la\in\La-\alpha\}$. In fact a more general result holds,

\begin{proposition}\label{prop:operator shifts}
Let $\La$ be a relatively separated set in $\R^m$. If $\{T_\la g_k:k\in\mathcal{K}, \la\in\La\}$ is a frame (Bessel sequence) for $\Lt(\R^m)$, then 
\begin{equation*}
T_{-x} \,S_g^{\La} \,T_x = S_g^{\La-x}\,, \quad \mbox{for all $x\in\R^m$}\,,
\end{equation*}
and $\{T_\la g_k:k\in\mathcal{K}, \la\in\La-x\}$ is a frame (Bessel sequence) for $\Lt(\R^m)$, for all $x\in\R^m$, with the same upper and lower frame bounds..
\end{proposition}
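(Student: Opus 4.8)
The plan is to observe that the computation already carried out in the excerpt for a shift $\al\in\La$ never actually used the membership $\al\in\La$; it relied only on the group and unitarity properties of the translation operators. First I would record these: on $\Lt(\R^m)$ each $T_x$ is unitary, with $T_x^* = T_{-x} = T_x^{-1}$ and $T_xT_y = T_{x+y}$. In particular $T_{-x}T_\la g_k = T_{\la-x}g_k$ and $\inner{T_x f}{T_\la g_k} = \inner{f}{T_{-x}T_\la g_k} = \inner{f}{T_{\la-x}g_k}$ for every $x\in\R^m$.

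Given these, the operator identity is immediate. For arbitrary $x\in\R^m$ and $f\in\Lt(\R^m)$ I would repeat the displayed computation verbatim,
\begin{align*}
T_{-x}\,S_g^{\La}\,T_x f
&= \sum_{k\in\mathcal{K}}\sum_{\la\in\La} \inner{T_x f}{T_\la g_k}\,T_{-x}T_\la g_k
= \sum_{k\in\mathcal{K}}\sum_{\la\in\La} \inner{f}{T_{\la-x} g_k}\,T_{\la-x} g_k \\
&= \sum_{k\in\mathcal{K}}\sum_{\mu\in\La-x} \inner{f}{T_\mu g_k}\,T_\mu g_k
= S_g^{\La-x} f,
\end{align*}
where the reindexing $\mu = \la - x$ is a bijection of $\La$ onto $\La-x$. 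This proves $T_{-x}S_g^{\La}T_x = S_g^{\La-x}$ on all of $\Lt(\R^m)$.

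For the frame (Bessel) claim with identical bounds, I would compute the analysis sum for $\La-x$ directly, again using only unitarity. For any $f$,
\begin{equation*}
\sum_{k\in\mathcal{K}}\sum_{\mu\in\La-x} \abs{\inner{f}{T_\mu g_k}}^2
= \sum_{k\in\mathcal{K}}\sum_{\la\in\La} \abs{\inner{f}{T_{\la-x} g_k}}^2
= \sum_{k\in\mathcal{K}}\sum_{\la\in\La} \abs{\inner{T_x f}{T_\la g_k}}^2,
\end{equation*}
where the first equality is the same bijective reindexing and the second uses $\inner{f}{T_{\la-x}g_k} = \inner{T_x f}{T_\la g_k}$. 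Applying the frame (Bessel) inequality for $\{T_\la g_k : k\in\mathcal{K},\la\in\La\}$ to the function $T_x f$ and then using $\norm{T_x f}_2 = \norm{f}_2$ gives
\begin{equation*}
A_g \norm{f}_2^2 \leq \sum_{k\in\mathcal{K}}\sum_{\mu\in\La-x} \abs{\inner{f}{T_\mu g_k}}^2 \leq B_g \norm{f}_2^2,
\end{equation*}
which is exactly the frame (Bessel) condition for $\La-x$ with the same constants $A_g$ and $B_g$; in the Bessel case only the upper inequality is claimed and obtained. I would also note that $\La-x$ is relatively separated with $\text{rel}(\La-x)=\text{rel}(\La)$, so the family indexed by $\La-x$ is of the admissible type.

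I do not expect a genuine obstacle here: the statement is essentially the assertion that the whole construction is translation-covariant, and every step reduces to the group law and unitarity of $\{T_x\}$. The only point deserving a word of care is the legitimacy of rearranging the double series, but for the analysis sum all terms are nonnegative, so the reindexing $\mu=\la-x$ is unconditionally valid, and for the operator identity the convergence is inherited from the boundedness of $S_g^{\La}$ through the identity itself. An alternative, even shorter route to the equality of bounds is to note that $S_g^{\La-x}=T_{-x}S_g^{\La}T_x$ is a unitary conjugate of $S_g^{\La}$, hence has the same spectrum and therefore the same optimal frame bounds; I would nonetheless keep the direct computation as the primary argument, since it simultaneously settles the Bessel case and the convergence question.
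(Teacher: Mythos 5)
Your proof is correct and follows essentially the same route as the paper: the paper proves the operator identity by exactly the displayed computation you reproduce (stated there for $\alpha\in\La$ but, as you note, never using that membership), and leaves the transfer of frame bounds implicit, which you supply via the standard reindexing and the unitarity relation $\norm{T_x f}_2 = \norm{f}_2$. Nothing is missing; your added remarks on nonnegativity of the analysis sum and on unitary conjugation are sound but not needed beyond what the paper intends.
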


By Proposition~\ref{prop:operator shifts}, the canonical dual frame $\{ \big (S_g^{\La}\big)^{-1} T_\la g_k:k\in\mathcal{K}, \la\in\La\}$ does not have the same structure as $\{T_\la g_k:k\in\mathcal{K}, \la\in\La\}$, that is it is not a frame of translates. Therefore, in order to compute the canonical dual frame we would have to apply $\big (S_g^{\La} \big)^{-1}$ to $T_\la g_k$, for all $k\in\mathcal{K}$ and all $\la\in\La$. To avoid the mentioned complications, we search for a pair of dual frames, rather than just one frame. Let $\{T_\la g_k:k\in\mathcal{K},\la\in\La\}$ and $\{T_\la h_k:k\in\mathcal{K}, \la\in\La \}$ be Bessel sequences for $\Lt(\R^m)$, then we can define a mixed frame operator
\begin{equation*}
S_{g,h}^{\La} f = \sum_{k\in\mathcal{K}} \sum_{\la\in\La} \inner{f}{T_\la g_k} T_\la h_k
\end{equation*}
which is a bounded linear operator on $\Lt(\R^m)$. If $S_{g,h}^{\La} f = f$ for every $f\in\Lt(\R^m)$, then we call $\{T_\la h_k:k\in\mathcal{K}, \la\in\La\}$ a  dual frame of $\{T_\la g_k:k\in\mathcal{K}, \la\in\La \}$. The covariance relationship of Proposition~\ref{prop:operator shifts} holds also for mixed frame operators.


\subsection{Almost Periodic Functions}\label{sec:ap}

In the treatment of frames of irregular translates, with translates originting from model sets, we naturally come across almost periodic functions. We will consider classes of almost periodic functions with spectrum in a model set and use some of their properties to form our results. We review some basic facts about almost periodic functions that we use later in the article. For a more detailed exposition we refer to \cite{B47, Bes55,BB31}.

We say that a bounded and continuous function $f:\R^m\rightarrow \C$ is {\it almost periodic}, if to every $\epsilon >0$ there corresponds a relatively dense set $E(f,\epsilon) \subseteq \R^m$, such that for every $\tau\in E(f,\epsilon)$,
\begin{equation*}
\sup_{t\in\R^m}\abs{f(t + \tau) - f(t)} \leq \epsilon\,.
\end{equation*}
A subset $D$ is called relatively dense in $\R^m$ when there exists $r>0$, such that for all $t\in\R^m$, $D\cap B(t,r) \neq \emptyset$, where $B(t,r)=r[0,1]^m + t$. Each $\tau\in E(f,\epsilon)$ is called an $\epsilon$-period of $f$. Let $AP(\R^m)$ denote the space of almost periodic functions. Each almost periodic function is uniformly continuous.

\noindent For every almost periodic function the mean value
\begin{equation*}
\M\{f\} = \M_t\{f(t)\} = \lim_{R\rightarrow \infty} \frac{1}{R^m} \int_{B(x,R)} f(t) \,dt\,,
\end{equation*}
where $B(x,R)=R[0,1]^m+x$, $x\in\R^M$ exists and is independent of $x\in\R^m$. The following also holds
\begin{theorem}\label{thm:ap in 2 variables}
If $f(x,t)$, with $(x,t) \in \R^m \times \R^m$, is almost periodic, then it is almost periodic with respect to each of the variables $x$ and $t$. Moreover, $\M_t\{ f(x,t)\}$ is an almost periodic function of $x$.
\end{theorem}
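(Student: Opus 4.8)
The plan is to split the statement into two parts: (i) separate almost periodicity in $x$ and in $t$, with almost-periods that are uniform in the frozen variable, and (ii) almost periodicity of the partial mean $F(x):=\M_t\{f(x,t)\}$. I expect (i) to be the substantive part and (ii) to follow almost formally once (i) is in hand.

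For (i), the cleanest route is Bochner's criterion from the standard theory of almost periodic functions (see \cite{B47,Bes55}): a bounded continuous function on $\R^{2m}$ is almost periodic if and only if the family of all its translates $\{f(\cdot+\tau_1,\cdot+\tau_2):(\tau_1,\tau_2)\in\R^{2m}\}$ is precompact in the sup-norm. Given this, I would restrict attention to translations in the first factor only, i.e.\ to the subfamily $\{f(\cdot+\tau_1,\cdot):\tau_1\in\R^m\}$ obtained by setting $\tau_2=0$. A subset of a precompact set is precompact, so this subfamily is again precompact in the sup-norm. Precompactness of the $x$-translates in the full sup-norm is, by the same criterion applied to the subgroup $\R^m\times\{0\}$, exactly the assertion that for every $\epsilon>0$ the set $\{\tau_1\in\R^m:\sup_{x,t}\abs{f(x+\tau_1,t)-f(x,t)}\le\epsilon\}$ is relatively dense in $\R^m$. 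That is precisely almost periodicity in $x$, uniformly in $t$; the symmetric argument handles $t$.

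If one prefers to argue directly from the definition, the key algebraic observation is that the difference of two $\epsilon$-periods is a $2\epsilon$-period: if $(\tau_1,\tau_2)$ and $(\tau_1',\tau_2')$ both lie in $E(f,\epsilon)$, then a change of variables gives $\sup_{x,t}\abs{f(x+\tau_1-\tau_1',t+\tau_2-\tau_2')-f(x,t)}\le 2\epsilon$. Combining this with uniform continuity of $f$ (small shifts are small-periods) and a pigeonhole argument over a fine grid in $t$-space, one would produce almost-periods whose second component is negligible while their first components remain relatively dense in $\R^m$. This pigeonhole step---forcing the $\tau_2$-component to (nearly) cancel without destroying relative density of the $\tau_1$-component---is exactly where I expect the real work to lie, and it is the main obstacle of the theorem.

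For (ii), first note that $F$ is well defined: by (i), $f(x,\cdot)$ is almost periodic in $t$ for each fixed $x$, so $\M_t\{f(x,t)\}$ exists. Boundedness $\abs{F(x)}\le\sup\abs{f}$ and continuity of $F$ are immediate from boundedness and uniform continuity of $f$ together with monotonicity of the mean. The decisive step is to transport the uniform $x$-almost-periods from (i) to $F$: if $\tau$ satisfies $\sup_{x,t}\abs{f(x+\tau,t)-f(x,t)}\le\epsilon$, then by linearity and positivity of the mean value,
\begin{equation*}
\abs{F(x+\tau)-F(x)}=\absbig{\M_t\{f(x+\tau,t)-f(x,t)\}}\le\M_t\{\abs{f(x+\tau,t)-f(x,t)}\}\le\epsilon\,,
\end{equation*}
uniformly in $x$. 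Hence every uniform $\epsilon$-period of $f$ in $x$ is an $\epsilon$-period of $F$, and since these form a relatively dense set by (i), $F$ is almost periodic. Thus (ii) is essentially a corollary of the uniform version of (i), which is why the entire difficulty is concentrated in establishing the uniformity asserted in part (i).
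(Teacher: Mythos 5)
The first thing to note is that the paper contains no proof of Theorem~\ref{thm:ap in 2 variables}: it is quoted as classical background on almost periodic functions, with the reader referred to \cite{B47,Bes55}, so the only meaningful comparison is with the standard arguments in that literature. Your proof is correct, and it takes a genuinely different, essentially self-contained route. In part (i), Bochner's criterion gives precompactness of the full orbit of translates of $f$ in the sup norm; the sub-family of translates along $\R^m\times\{0\}$ is then precompact, and the finite-covering argument that proves the easy direction of Bochner's criterion (equivalently, the vector-valued Bochner theorem applied to $x\mapsto f(x,\cdot)$ viewed as a map into the bounded continuous functions of $t$ --- this is standard, but you should cite it rather than gesture at ``the same criterion applied to the subgroup'') yields a relatively dense set of $\tau_1$ with $\sup_{x,t}\abs{f(x+\tau_1,t)-f(x,t)}\le\epsilon$. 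This uniformity in the frozen variable is indeed the crux, and you isolate it correctly; part (ii) then follows exactly as you say, since $\absbig{\M_t\{f(x+\tau,t)\}-\M_t\{f(x,t)\}}\le\sup_t\abs{f(x+\tau,t)-f(x,t)}\le\epsilon$ and $F$ is bounded and uniformly continuous. By contrast, the classical proof behind the paper's citation goes through Bohr's approximation theorem: for a trigonometric polynomial $\sum_j a_j e^{2\pi i(\la_j\cdot x+\mu_j\cdot t)}$ both assertions are trivial ($\M_t$ deletes every term whose $t$-frequency is nonzero), and both survive uniform limits because $\absbig{\M_t\{f_n(x,t)\}-\M_t\{f(x,t)\}}\le\norm{f_n-f}_\infty$. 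The approximation route is shorter but leans on a deep theorem; your compactness route needs only Bochner's characterization and makes explicit the uniform almost periodicity that the paper's later arguments (e.g.\ in Propositions~\ref{prop:function N} and~\ref{prop:bessel}) actually use.
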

Since the mean value of every almost periodic function exists, we have the harmonic analysis of almost periodic functions. For each $\la\in\R^m$,
\begin{equation*}
a(\la,f) := \M_t\{f(t) e^{2\pi i \la \cdot t}\}\,,
\end{equation*} 
are Fourier coefficients of $f$, and are nonzero only for a countable number of $\la\in\R^m$ (\cite{B47}). The values $\la\in\R^m$ for which $a(\la,f)\neq 0$ are called the characteristic exponents of $f$ and they constitute the co-called Bohr spectrum of $f$,
\begin{equation*}
\sigma(f) := \big\{\lambda\in\R^m \,:\, a(\la,f) \neq 0 \big \}\,.
\end{equation*}
To every $f\in AP(\R^m)$ we can associate the formal Fourier series
\begin{equation}\label{eq:Fourier series}
f(t) \sim \sum_{\la \in \sigma(f)} a(\la,f) e^{-2\pi i \la \cdot  t}\,,
\end{equation}
and two almost periodic functions $f$ and $g$ are equal if and only if their Fourier coefficients equal $a(\la,f)=a(\la,g)$, for all $\la\in\R^m$. This is the Uniqueness Theorem (\cite{B47}) for almost periodic functions. 

If $\{f_n\}_{n\in\N}$ is a sequence of almost periodic functions, which converges uniformly to the limit function $f$, which is by the way also almost periodic, then $\M\{f\} = \lim_{n\rightarrow \infty} \M\{f_n\}$. Moreover, if 
\begin{equation*}
f_n(t) \sim \sum_{\la_n\in \sigma(f_n)} a(\la_n,f_n)e^{-2\pi i \la_n \cdot  t}\,,
\end{equation*}
then 
\begin{equation*}
\M_t\{f(t) e^{2\pi i \la \cdot  t}\} = \lim_{n\rightarrow \infty} \M_t\{ f_n(t) e^{2\pi i \la \cdot  t}\}
\end{equation*}
holds uniformly for all $\la\in\R^m$.

We conclude this section with three important results that we will need later. The following form of Parseval's equation holds for almost periodic functions.
\begin{theorem}[Bohr's Fundamental Theorem]\cite{B47}\label{thm:Bohr}
Let $f\in AP(\R^m)$ with Fourier series given by \eqref{eq:Fourier series}. Then
\begin{equation*}
\M_t\{\abs{f(t)}^2\} = \sum_{\la\in\sigma(f)} \abs{a(\la,f)}^2\,.
\end{equation*}
\end{theorem}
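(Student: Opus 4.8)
The plan is to prove the two inequalities separately: Bessel's inequality by an elementary expansion, and the reverse inequality via Bohr's approximation theorem. The starting observation is that the characters are orthonormal under the mean value, namely $\M_t\{e^{2\pi i (\mu - \la)\cdot t}\} = 1$ if $\la=\mu$ and $0$ otherwise, so that the family $\{e^{-2\pi i \la\cdot t}\}_{\la}$ behaves like an orthonormal system for the sesquilinear form $(u,v)\mapsto\M_t\{u(t)\overline{v(t)}\}$. Note also that $\abs{f}^2 = f\bar f$ is again almost periodic, so the left-hand side $\M_t\{\abs{f(t)}^2\}$ is well defined.

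For Bessel's inequality I would fix a finite subset $F\subseteq\sigma(f)$ and set $s_F(t) = \sum_{\la\in F} a(\la,f)\,e^{-2\pi i \la\cdot t}$. Expanding the nonnegative quantity $\M_t\{\abs{f(t)-s_F(t)}^2\}$ and using the defining relation $a(\la,f)=\M_t\{f(t)e^{2\pi i \la\cdot t}\}$ together with the orthonormality above, the two cross terms and the mean of $\abs{s_F}^2$ each collapse to $\sum_{\la\in F}\abs{a(\la,f)}^2$, leaving $\sum_{\la\in F}\abs{a(\la,f)}^2 \leq \M_t\{\abs{f(t)}^2\}$. Taking the supremum over all finite $F$ shows that $\sum_{\la\in\sigma(f)}\abs{a(\la,f)}^2$ converges and is bounded above by $\M_t\{\abs{f(t)}^2\}$.

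The reverse inequality is the crux, and the obstacle is precisely that Bessel's inequality cannot by itself detect the completeness of the characters; this is where genuine external input is needed. I would invoke the Bochner--Fej\'er approximation theorem (as in \cite{B47}): there is a sequence of trigonometric polynomials $\sigma_n(t) = \sum_{\la\in\sigma(f)} d_\la^{(n)}\, a(\la,f)\, e^{-2\pi i \la\cdot t}$, each with only finitely many nonzero convergence factors $d_\la^{(n)}\in[0,1]$ satisfying $d_\la^{(n)}\to 1$ for every $\la$, such that $\sigma_n\to f$ uniformly on $\R^m$. Since uniform convergence of bounded functions is preserved under squaring, $\abs{\sigma_n}^2\to\abs{f}^2$ uniformly, and by the continuity of the mean value under uniform limits (recorded earlier in this section) we obtain $\M_t\{\abs{\sigma_n(t)}^2\}\to\M_t\{\abs{f(t)}^2\}$.

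Finally, by the orthonormality of the characters each $\M_t\{\abs{\sigma_n(t)}^2\} = \sum_{\la}\big(d_\la^{(n)}\big)^2\abs{a(\la,f)}^2$ is a finite sum. Because $0\leq\big(d_\la^{(n)}\big)^2\leq 1$ and the majorant $\sum_{\la}\abs{a(\la,f)}^2$ converges by the Bessel step, dominated convergence together with $d_\la^{(n)}\to 1$ yields $\lim_n\M_t\{\abs{\sigma_n(t)}^2\} = \sum_{\la\in\sigma(f)}\abs{a(\la,f)}^2$. Comparing this with the limit from the previous paragraph gives the desired equality. Everything apart from the approximation theorem is routine bookkeeping with the mean value and orthonormality, so the expected main difficulty is supplying (or citing in the appropriate form) the uniform Bochner--Fej\'er approximation of $f$ by polynomials built from its own characteristic exponents.
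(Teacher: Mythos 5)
Your proposal cannot be matched against a proof in the paper, because the paper does not prove this statement: Theorem~\ref{thm:Bohr} is quoted from Bohr's monograph \cite{B47} and used as a black box. Judged on its own terms, your argument is correct in every step you carry out yourself: the orthonormality $\M_t\{e^{2\pi i(\mu-\la)\cdot t}\}=\delta_{\la\mu}$ under the mean, the fact that $\abs{f}^2$ is again almost periodic, the expansion of $\M_t\{\abs{f(t)-s_F(t)}^2\}\geq 0$ giving Bessel's inequality $\sum_{\la\in F}\abs{a(\la,f)}^2\leq\M_t\{\abs{f(t)}^2\}$ for every finite $F\subseteq\sigma(f)$, the passage from $\sigma_n\to f$ uniformly to $\abs{\sigma_n}^2\to\abs{f}^2$ uniformly and hence $\M_t\{\abs{\sigma_n(t)}^2\}\to\M_t\{\abs{f(t)}^2\}$ (exactly the continuity of the mean under uniform limits recorded in Section~\ref{sec:ap}), and the dominated-convergence identification of $\lim_n\sum_{\la}\big(d_\la^{(n)}\big)^2\abs{a(\la,f)}^2$ with $\sum_{\la\in\sigma(f)}\abs{a(\la,f)}^2$, using the Bessel sum as majorant.

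The one genuine issue is the provenance of your key input. In the classical development --- including \cite{B47} itself --- the identity you are proving is called the \emph{fundamental} theorem precisely because it is established first, and the approximation theorem (uniform approximation by Bochner--Fej\'er polynomials with convergence factors) is then \emph{deduced} from it, via the uniqueness theorem or via mean-square convergence of the Bochner--Fej\'er means, which is essentially equivalent to Parseval. So invoking the approximation theorem ``as in \cite{B47}'' makes your argument circular relative to the very reference you cite. The circularity is repairable, since the approximation theorem admits Parseval-free proofs (Weyl's proof, or the modern route identifying $AP(\R^m)$ with the continuous functions on the Bohr compactification, where Stone--Weierstrass gives sup-norm density of trigonometric polynomials and the mean becomes Haar integration); you should cite one of those instead. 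Alternatively, you can weaken the input and shorten the proof: plain sup-norm density of trigonometric polynomials suffices, with no convergence factors tied to $f$'s own coefficients, because if $\norm{f-P}_\infty<\epsilon$ and the finite set $F$ contains the exponents of $P$, then orthogonality gives $\M_t\{\abs{f-s_F}^2\}\leq\M_t\{\abs{f-P}^2\}\leq\epsilon^2$, i.e. $\M_t\{\abs{f(t)}^2\}-\sum_{\la\in F}\abs{a(\la,f)}^2\leq\epsilon^2$, which is the reverse inequality with no dominated-convergence step at all.
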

It follows directly from the Uniqueness Theorem and Theorem~\ref{thm:Bohr}, that if all the coefficients $a(\la,f)$ of $f\in AP(\R^m)$ are zero, then the function $f\equiv 0$. Moreover, for non-negative almost periodic functions, we have
\begin{theorem}\label{thm:ap non-negative}
Let $f\in AP(\R^m)$ be non-negative. Them $\M \{ f \} = 0$ if and only if $f\equiv 0$.
\end{theorem}
Note that the mean value of an almost periodic function is its $0-$th Fourier coefficient. Analogously to the case of periodic functions, there exists Plancherel's Theorem  for almost periodic functions
\begin{theorem}[Plancherel's Theorem]\label{thm:Plancherel}
Let $f,g \in AP(\R^m)$ and such that $\sigma(f) = \sigma(g)$. Then
\begin{equation*}
\M_t\{ f(t)\, \overline{g(t)} \} = \sum_{\la\in\sigma(f)} a(\la,f)\,\overline{a(\la,g)} \,.
\end{equation*}
\end{theorem}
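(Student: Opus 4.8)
The plan is to derive Plancherel's identity from Bohr's Fundamental Theorem (Theorem~\ref{thm:Bohr}) by polarization. First I would record that the sesquilinear form $B(f,g):=\M_t\{f(t)\overline{g(t)}\}$ is well defined on $AP(\R^m)$: since $g\in AP(\R^m)$ forces $\overline{g}\in AP(\R^m)$, and products of almost periodic functions are again almost periodic, the function $f\,\overline{g}$ lies in $AP(\R^m)$, so its mean value exists. The quadratic form attached to $B$ is precisely $Q(h):=\M_t\{\abs{h(t)}^2\}$, which is exactly the quantity controlled by Bohr's theorem.

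The second ingredient is linearity of the Fourier coefficients. Because $a(\la,h)=\M_t\{h(t)e^{2\pi i\la\cdot t}\}$ and the mean value is linear, we have $a(\la,\al f+\be g)=\al\,a(\la,f)+\be\,a(\la,g)$ for all scalars $\al,\be$ and all $\la\in\R^m$. Under the hypothesis $\sigma(f)=\sigma(g)$, each of the four functions $f+g$, $f-g$, $f+ig$, $f-ig$ has Bohr spectrum contained in $\sigma(f)$, so applying Theorem~\ref{thm:Bohr} to each of them yields four identities of the shape $\M_t\{\abs{f+i^k g}^2\}=\sum_{\la\in\sigma(f)}\abs{a(\la,f)+i^k a(\la,g)}^2$; any $\la$ outside the spectrum of the relevant combination contributes a zero term and may harmlessly be included in the sum over $\sigma(f)$.

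Next I would combine these four identities through the complex polarization identity
\begin{equation*}
B(f,g)=\tfrac14\Big[Q(f+g)-Q(f-g)+i\,Q(f+ig)-i\,Q(f-ig)\Big],
\end{equation*}
and apply the same scalar polarization identity termwise, namely $a(\la,f)\overline{a(\la,g)}=\tfrac14\big[\abs{a(\la,f)+a(\la,g)}^2-\cdots\big]$ for each fixed $\la$. Summing the scalar identity over $\la\in\sigma(f)$ and matching it, term by term, against the expansion of $B(f,g)$ produced by the four applications of Bohr's theorem, yields $\M_t\{f\,\overline{g}\}=\sum_{\la\in\sigma(f)}a(\la,f)\,\overline{a(\la,g)}$, which is the assertion.

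The only genuinely delicate point is the legitimacy of recombining the four infinite sums into a single series and exchanging the order of summation with the $\tfrac14[\cdots]$ combination. This is justified because each sum $\sum_\la\abs{a(\la,h)}^2$ converges (it equals the finite mean value $Q(h)$), while the target series converges absolutely: by Cauchy--Schwarz, $\sum_\la\abs{a(\la,f)}\,\abs{a(\la,g)}\le\big(\sum_\la\abs{a(\la,f)}^2\big)^{1/2}\big(\sum_\la\abs{a(\la,g)}^2\big)^{1/2}<\infty$. Hence all rearrangements are permissible and the termwise matching is valid. I expect this convergence bookkeeping to be the only mildly subtle step; the polarization algebra itself is routine, and the role of the hypothesis $\sigma(f)=\sigma(g)$ is merely to let every sum be written over the single index set $\sigma(f)$.
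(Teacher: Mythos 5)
Your proof is correct. Note that the paper itself does not prove this statement: Theorem~\ref{thm:Plancherel} is presented as classical background from the theory of almost periodic functions (in the spirit of \cite{B47}), so there is no internal proof to compare against. Your derivation -- polarizing Bohr's Fundamental Theorem (Theorem~\ref{thm:Bohr}), using linearity of $\la\mapsto a(\la,\cdot)$ so that $\sigma(f\pm g),\sigma(f\pm ig)\subseteq\sigma(f)$ under the hypothesis $\sigma(f)=\sigma(g)$, and matching the four resulting Parseval identities termwise -- is exactly the standard route, and your handling of the convergence bookkeeping is sound: each of the four series has non-negative terms and a finite sum, so splitting and recombining is unconditional, and the Cauchy--Schwarz bound gives absolute convergence of the target series. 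Your closing observation is also accurate: the hypothesis $\sigma(f)=\sigma(g)$ is only a normalization of the index set, since terms with $\la\notin\sigma(f)\cap\sigma(g)$ vanish anyway.
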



\subsection{Model Sets}\label{sec:ms}

Model sets were introduced by Meyer \cite{Me72} in his study of harmonious sets. We begin with a lattice in $\Ga \subset \R^m \times \R^n$, where $\R^m$ and $\R^n$ are equipped with Euclidean metrics and $\R^m \times \R^n$ is the orthogonal sum of the two spaces. Let $p_1:\R^m \times \R^n \rightarrow \R^m$ and $p_2: \R^m \times \R^n \rightarrow \R^n$ be orthogonal projection maps such that $p_1|\Ga$ is injective and $L= p_1(\Ga)$ is a dense subgroup of $\R^m$. We impose the same properties on $p_2$. 
For the dual lattice of $\Ga$, denoted by $\Ga^\ast$, let $p_1^*,p_2^*$ be defined as $p_1,p_2$. It holds then, that  $p_1^*|\Ga^\ast$ is injective and $p_1^*(\Ga^\ast)$ is a dense subgroup of $\R^m$, and the same holds for $p_2^*$. 
Moreover, for $\ga\in\Ga$ and $\ga^*\in\Ga^*$,
\begin{equation*}
\Z \ni \ga \cdot \ga^* = (p_1(\ga),p_2(\ga))\cdot (p_1^*(\ga^*),p_2^*(\ga^*)) = p_1(\ga) \cdot p_1^*(\ga^*) + p_2(\ga)\cdot p_2^*(\ga^*)
\end{equation*}
Let $\Om\subset \R^n$ be compact, equal to the closure of its interior, and to have boundary of Lebesgue measure $0$. We call $\Om$ a window. Then the model set $\La(\Om)$ is defined as 
\begin{equation*}
\La(\Om) :=  \left \{p_1(\ga)\,:\, \ga\in\Ga,\, p_2(\ga) \in \Om \right \}  \subset L \subset \R^m\,.
\end{equation*}
Model set is {\it simple} if $n=1$ and $\Om$ is an interval, and it is {\it generic} if the boundary of $\Om$ has no common points with $p_2(\Ga)$. Additionally, if $\Om$ is symmetric around the origin then $0\in\La(\Om)$.  We will be working only with simple model sets. We assume, without loss of generality, that from now on $\Om$ is symmetric around the origin.

\begin{example}
Let $\Ga \subseteq \R^m \times \R$ be a lattice given by
\begin{equation*}
\Ga = \{(\mathbf{I}+\be \al^T)k - \be l,l-\al^T k): k\in\Z^m\,, l\in\Z\}\,,
\end{equation*}
where $\mathbf{I}$ is the $m\times m$ identity matrix, $\al=(\al_1,\ldots,\al_m)^T$ and $\be=(\be_1,\ldots,\be_m)^T$ are column vectors in $\R^m$ such that the numbers $1,\al_1,\ldots,\al_m$ are linearly independent over the rationals, and the numbers $\be_1,\ldots,\be_m,1+\be^T \al$ are linearly independent over the rationals. Then for $\Om=[-1,1]$, $\La(\Om)$ is a simple model set. 
\end{example}

Model sets are a very natural generalizations of lattices, and for $n=0$ they reduce to a lattice and, thus, the results that we develop later on in the article reduce to the known ones for lattices. If $\La(\Om)$ is a model set, then it is uniformly discrete (e.g. there is an open ball $B(0,r)$ such that $(\La(\Om) - \La(\Om))\cap B(0,r) = \{0\}$), relatively dense, and has a well defined density 
\begin{equation*}
D(\La(\Om)) = \lim_{R\rightarrow \infty} \frac{\# \{\La(\Om)\cap B(x,R)\}}{R^m}\,,
\end{equation*}
where $\#S$ denotes the cardinality of the set $S$ and $B(x,R) = R[0,1]^m+x$. The limit is independent of $x\in\R^m$. When  $\La(\Om)$ is a simple model set it can be shown that $D(\La(\Om)) = \text{vol}(\Ga)^{-1} \abs{\Om}$, \cite{Sch98}.

Due to the underlying lattice structure of a model set, there exists a Poisson summation formula for $\La(\Om)$. Let $C_0^\infty(\Om)$ be the space of all smooth, real valued functions on $\R$ with support in $\Om$. Via the mapping $p_2\circ(p_1|_\Ga)^{-1}: L \rightarrow \R$ we obtain a space $\mathcal{C}(\La(\Om))$ of functions on $L$, vanishing off $\La(\Om)$:  for $\psi \in C_0^\infty(\Om)$, we define $w_\psi \in \mathcal{C}(\La(\Om))$ by
\begin{equation}\label{def:w_psi}
w_\psi: L \rightarrow \R \,,\quad \quad w_\psi(\la) := \psi(p_2(\ga)) \quad \mbox{for} \quad \la=p_1(\ga)\in \La(\Om)\,,
\end{equation}
and $w_\psi(\la)=0$ for $\la\notin \La(\Om)$. If $\psi$ were the indicator function of $\Om$, we would have $w_\psi(\la) = 1$ on $\La(\Om)$ and $w_\psi(\la)=0$ if $\la \notin \La(\Om)$. However, the indicator function is not smooth. The Poisson summation formula for model sets was originally stated for the class of Schwartz functions in \cite{Me12}. However, since it relies on the original Poisson summation formula, we can state it for a bigger space.

\begin{theorem}[Poisson Summation Formula]\label{thm:poisson}
Let $\La(\Om)$ be a model set defined by a relatively compact set $\Om\subseteq \R$ of non-empty interior and a lattice $\Ga\subseteq \R^m \times \R$. Let $\psi\in C_0^\infty(\Om)$, and the weight factors $w_\psi(\la)$ on $\La(\Om)$ be defined as in \eqref{def:w_psi}. Then, for every $F\in W(L^\infty,\ell^1)$ such that $\widehat{F}\in W(L^\infty,\ell^1)$, the following holds
\begin{equation}\label{eq:poisson}
\sum_{\la\in\La(\Om)} w_\psi(\la) F(\la)e^{-2\pi i \la \cdot t} =  \sum_{\ga^\ast \in \Ga^\ast} \widetilde{w}_\psi(-p^*_2(\ga^\ast))  \widehat{F}(t - p^*_1(\ga^\ast))\,,
\end{equation}
where 
\begin{equation}\label{eq:Fourier weight}
\widetilde{w}_\psi(p^*_2(\ga^\ast)) := \text{vol}(\Ga)^{-1} \widehat{\psi}(p^*_2(\ga^\ast))\quad \quad \mbox{for} \quad \ga^\ast\in \Ga^\ast\,.
\end{equation}
The identity holds pointwise for all $t\in\R^m$, and both sums converge uniformly and absolutely for all $t\in\R^m$. \end{theorem}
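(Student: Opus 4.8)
The plan is to establish the Poisson summation formula for a model set by lifting the problem to the underlying lattice $\Ga \subseteq \R^m \times \R$ and applying the classical Poisson summation formula there, then projecting the identity back down using the star map. First I would introduce an auxiliary function $G$ on $\R^m \times \R$ built from the given $F \in W(L^\infty,\ell^1)$ and the profile $\psi \in C_0^\infty(\Om)$, essentially setting $G(t,s) = F(t)\psi(s)$ or a suitable variant adapted to the evaluation point. The point of this construction is that summing $G$ over $\ga = (\la,\la^\star) \in \Ga$ reproduces the left-hand side: since $w_\psi(\la) = \psi(\la^\star)$ vanishes precisely when $\la^\star \notin \Om$ (equivalently $\la \notin \La(\Om)$), the sum $\sum_{\ga\in\Ga} G(\ga)$ collapses to $\sum_{\la\in\La(\Om)} w_\psi(\la) F(\la)$. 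To handle the factor $e^{-2\pi i \la \cdot t}$ and the translation in $t$ on the right-hand side, I would incorporate a modulation and realize the desired identity as the lattice Poisson formula evaluated at the correct argument.

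The second main step is to compute the Fourier transform of the tensor-type function $G$ on $\R^m \times \R$ and identify its values on the dual lattice $\Ga^\ast$. Because $G$ factors (up to the modulation carrying the variable $t$), its Fourier transform factors as $\widehat{F}$ in the first $m$ variables against $\widehat{\psi}$ in the last variable; evaluating at $\ga^\ast = (\alpha^\circ,\alpha) \in \Ga^\ast$ and writing $\alpha = p_2^*(\ga^\ast)$, $\alpha^\circ = p_1^*(\ga^\ast)$, this produces exactly the product $\widehat{\psi}(-p_2^*(\ga^\ast))\,\widehat{F}(t - p_1^*(\ga^\ast))$. Absorbing the covolume normalization $\text{vol}(\Ga)^{-1}$, which is the correct constant linking the lattice sum to the dual-lattice sum, yields the weight $\widetilde{w}_\psi$ as defined in \eqref{eq:Fourier weight}. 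The orthogonality relation $\ga \cdot \ga^* = p_1(\ga)\cdot p_1^*(\ga^*) + p_2(\ga)\cdot p_2^*(\ga^*) \in \Z$ is exactly what guarantees that the dual lattice of $\Ga$ decomposes compatibly with the two projections, so that the cross terms arrange themselves into the stated form.

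The final step is to verify the convergence and regularity claims. The hypothesis $F, \widehat{F} \in W(L^\infty,\ell^1)$ provides absolute summability of $\widehat{F}$ over translates by the relatively separated set $p_1^*(\Ga^\ast)$, while $\psi \in C_0^\infty$ ensures $\widehat\psi$ decays rapidly, so the coefficients $\widetilde{w}_\psi(-p_2^*(\ga^\ast))$ are summable and the right-hand series converges absolutely and uniformly in $t$. On the left, relative separation of $\La(\Om)$ together with the amalgam membership of $F$ gives absolute convergence there. To make the lattice Poisson formula applicable one needs $G$ and $\widehat{G}$ in the amalgam space on $\R^m\times\R$; this follows from the tensor structure since Wiener amalgam membership is preserved under tensor products and $\psi$ is Schwartz.

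I expect the main obstacle to be the bookkeeping that ensures the lattice Poisson formula, which a priori produces a sum over all of $\Ga^\ast$ with the full inner product $e^{2\pi i \ga \cdot \ga^*}$, cleanly separates into the transverse variable (giving $\widehat\psi$ evaluated on $p_2^*(\ga^\ast)$) and the longitudinal variable (giving the $t$-translate of $\widehat F$). Concretely, one must check that the phase $e^{2\pi i \la^\star \cdot \alpha}$ appearing from the transverse direction is precisely what the Fourier transform of $\psi$ absorbs, and that no spurious coupling between the $\R^m$ and $\R$ factors survives; this is where the orthogonal-sum structure of $\R^m \times \R$ and the injectivity of $p_1|\Ga$ are used decisively. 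A secondary technical point is justifying the interchange of the (absolutely convergent) lattice sum with the Fourier transform, which the amalgam hypotheses are designed to license.
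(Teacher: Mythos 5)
Your proposal is essentially the paper's own argument: the paper proves the theorem exactly by forming the tensor-type function $F(t)\psi(s)$, noting that the support of $\psi$ makes the restriction $\la\in\La(\Om)$ automatic so the model-set sum lifts to a sum over $\Ga$, applying the ordinary Poisson summation formula for $\Ga$ and $\Ga^\ast$ on $\R^m\times\R$ (with the amalgam hypotheses $F,\widehat F\in W(L^\infty,\ell^1)$ and $\psi$ smooth compactly supported guaranteeing absolute and uniform convergence of both series), and then evaluating the transverse variable at $x=0$. One small caution: summability must come from the joint amalgam bound on $\widehat F\otimes\widehat\psi$ over the lattice $\Ga^\ast$ in $\R^{m+1}$ (as you say in your final paragraph), not from summability of the coefficients $\widetilde w_\psi(-p_2^*(\ga^\ast))$ alone, which fails because $p_2^*(\Ga^\ast)$ is dense in $\R$.
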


Since $w_\psi(\la) = \psi(p_2(\ga))$ for $\la=p_1(\ga)$, one can forget about the restriction $\la\in\La$ which is given for free by the support of $\psi$. Then \eqref{eq:poisson} follows from the ordinary Poisson summation formula applied to the lattice $\Ga^\ast$ and its dual lattice $(\Ga^\ast)^\ast = \Ga$. 

\begin{remark}
Meyer, in \cite{Me12}, stated the Poisson summation formula for model sets for functions in the Schwartz class $\mathcal{S}(\R^m)$. Ordinary Poisson summation formula holds pointwise on a larger space, namely for functions in Wiener algebra $W(L^\infty,\ell^1)$ whose Fourier transforms are also in Wiener algebra $W(L^\infty,\ell^1)$.  If both $F$ and $\widehat{F}$ are in $W(L^\infty,\ell^1)$, then $F\otimes \psi$ and $\widehat{F}\otimes\widehat{\psi}$ are in  $W(L^\infty,\ell^1)(\R^m\times \R)$, they are continuous and the series 
\begin{equation*}
\sum_{\ga\in\Ga} \psi(p_2(\ga)) F(p_1(\ga)) e^{-2\pi i (p_1(\ga),p_2(\ga)) \cdot (t,x)}\quad \mbox{and} \quad \text{vol}(\Ga)^{-1} \sum_{\ga^\ast \in \Ga^\ast} \widehat{\psi}(x- p^*_2(\ga^\ast))  \widehat{F}(t - p^*_1(\ga^\ast)
\end{equation*}
converge absolutely by the very definition of $W(L^\infty,\ell^1)$. These sums are equal for all $(t,x)\in\R^m\times \R$, and taking $x=0$ yields \eqref{eq:poisson}. This is in some sense the largest "natural" space on which the Poisson summation formula holds pointwise. Therefore, we could extend the Poisson summation formula for model sets to hold for $F$ such that $F,\widehat{F}\in W(L^\infty,\ell^1)$. 

Another natural space for which the Poisson summation formula for model sets holds is the Feichtinger's algebra $M^1$. It follows from the fact, that if $F\in M^1$, then $F\in W(L^\infty,\ell^1)$ and $\widehat{F} \in W(L^\infty,\ell^1)$.
\end{remark}

\begin{remark}\label{rem:a.p. construction}
Poisson summation formula for model sets gives a method for constructing almost periodic functions with desired spectrum. Indeed, the function on the right hand side of \eqref{eq:poisson} is almost periodic since it equals an absolutely convergent trigonometric series. By the property of almost periodic functions, the Fourier series of this function coincides with this trigonometric series. That means that the Fourier coefficients of the right hand side of \eqref{eq:poisson} equal $w_\psi(\la)F(\la)$.
\end{remark}

On the collection of point sets in $\R^m$ that are relatively dense and uniformly separated, with minimal separation greater than $r$, denoted by $D_r(\R^m)$, we can put a topology, called local topology: two sets $\La$ and $\La'$ of $D_r(\R^m)$ are close if, for some large $R$ and some small $\epsilon$, one has
\begin{equation}\label{eq:topology}
\La'\cap B(0,R) = (\La +v)\cap B(0,R) \quad \text{for some $v\in B(0,\epsilon)$.}
\end{equation}
Thus for each point of $\La$ within the ball $B(0,R)$, there is a point of $\La'$ within the distance $\epsilon$ of that point, and vice versa. The pairs $(\La,\La')$ satisfying \eqref{eq:topology} are called $(R,\epsilon)$-close. More formally, for $\epsilon >0$ and a ball $B(x,R)$, define
\begin{equation*}
U(\epsilon,B(x,R)):= \{(\La,\La')\in D_r(\R^m)\times D_r(\R^m):(\La +v)\cap B(x,R) = \La '\cap B(x,R), \mbox{for some}\, v\in B(0,\epsilon)\}\,.
\end{equation*}
These sets form a fundamental system for a uniform structure on $D_r(\R^m)$ whose topology has the sets
\begin{equation*}
U(\epsilon,B(x,R))[\La] := \{ \La' \in D_r(\R^m):(\La,\La')\in U(\epsilon,B(x,R))\}
\end{equation*}
as a neighbourhood basis of $\La$. Note, all the point sets $\La$ from $D_r(\R^m)$ have the same relative separation $\text{rel}(\La)$.

On the set $D_r(\R^m)$ we can put a metric. Let $\La,\La' \in D_r(\R^m)$, then
\begin{equation*}
d(\La,\La') := \limsup_{R\rightarrow \infty} \frac{\# \{ ((\La \cup \La')\setminus (\La \cap \La')) \cap B(0,R)\}}{R^{m}}
\end{equation*}
is a pseudometric on $D_r(\R^m)$. We obtain a metric by defining the equivalence relation
\begin{equation*}
\La \equiv \La' \iff d(\La,\La')=0\,.
\end{equation*}

Later in the article, we will work with a collection of model sets. Let $\Om$ be a window, then for each $(t,s)\in\R^m\times\R^n$ we may define
\begin{equation*}
\La_{(t,s)}^\Om = t + \La(\Om - s)
\end{equation*}
Note that $\La(\Om)$ and all its shifts have the same relative separation $\text{rel}(\La(\Om))$.

If $(t,s)\equiv(t',s') \mod \Ga$, then $\La_{(t,s)}^\Om=\La_{(t',s')}^\Om$, however the inverse is not necessarily true. In the sequel we will write $(t,s)_L$ for the congruence class $(t,s)\mod \Ga$. These model sets are parametrized  by the torus  $\T^{m+n}:=(\R^m\times \R^n) / \Ga = (\R/\Z)^{m+n}$. There is  a natural measure, Haar measure, $\theta$ on $\T^{m+n}$. It is invariant under the action of $\R^m$ on $(\R^m\times \R^n )/\Ga$ and it acts by
\begin{equation*}
x+(t,s)_L = (x+t,s)_L\,.
\end{equation*}
We can define an embedding $\R^m \rightarrow \T^{m+n}$, $x\mapsto (x,0)_L$. The image of this embedding is dense in $\T^{m+n}$.

Now, let $\La(\Om)$ be a model set, and we translate it by elements $x\in\R^m$
\begin{equation*}
x+\La(\Om) = x + \La(\Om+0) = \La_{(x,0)}^\Om\,.
\end{equation*}
The closure of the set of all translates $\La_{(x,0)}^\Om$ of $\La(\Om)$ under the local topology \eqref{eq:topology} forms the so-called local hull $X(\La(\Om))$ of $\La(\Om)$, $X(\La(\Om)) = \overline{\{x+\La(\Om):x\in\R^m\}}$, (\cite{M05},\cite{Sch00}).

\begin{proposition}\cite{Sch00}
Let $\La(\Om)$ be a model set. There is a continuous mapping 
\begin{equation*}
\beta :\, X(\La(\Om)) \rightarrow \T^{m+n}\,,
\end{equation*}
called the torus parametrization, such that $1)$ $\beta$ is onto; $2)$ $\beta$ is injective almost everywhere with respect to the Haar measure $\theta$; $3)$ $\beta(x+\La') = x+ \beta(\La')$ for all $x\in\R^m$ and all $\La'\in X(\La)$; and $4)$ $\beta(x+\La(\Om)) = (x,0)_L$ for all $x\in\R^m$.
\end{proposition}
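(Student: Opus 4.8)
The plan is to define $\beta$ first on the dense orbit $\{x+\La(\Om):x\in\R^m\}\subset X(\La(\Om))$ by setting $\beta(x+\La(\Om)):=(x,0)_L$, which forces property 4), and then to extend $\beta$ to the closure $X(\La(\Om))$ by uniform continuity. Since $\T$ is a complete (indeed compact) metric space and the orbit is dense in the metrizable space $X(\La(\Om))$, it suffices to prove that the assignment $x+\La(\Om)\mapsto (x,0)_L$ is well defined and uniformly continuous for the local topology \eqref{eq:topology}; its unique continuous extension is then the desired $\beta$. Granting this, continuity is immediate, property 3) holds because $\beta((x+y)+\La(\Om))=(x+y,0)_L=x+(y,0)_L$ on the dense orbit and both sides are continuous in the second set, and surjectivity follows because $\beta(X(\La(\Om)))$ is compact, hence closed (using compactness of the hull, which is standard for model sets), and contains the dense set $\{(x,0)_L:x\in\R^m\}$.

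The heart of the matter, and the step I expect to be the main obstacle, is the uniform continuity estimate. The key observation is that for every $\la\in\La(\Om)$ one has, in $\T$, the identity $(x+\la,\la^\star)_L=(x,0)_L+(\la,\la^\star)_L=(x,0)_L$, since $(\la,\la^\star)=\ga\in\Ga$. Suppose $x+\La(\Om)$ and $y+\La(\Om)$ are $(R,\epsilon)$-close, so that inside $B(0,R)$ the two sets agree after a shift by some $v$ with $\abs{v}<\epsilon$. For each point $x+\la$ lying in $B(0,R')$, with $R'$ slightly smaller than $R$, there is a matching $\mu\in\La(\Om)$ with $x+\la=y+v+\mu$, and computing in $\T$ as above gives $(x,0)_L-(y,0)_L=(v,\la^\star-\mu^\star)_L$ for every such pair. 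Because $p_1|\Ga$ is injective, the only $\ga\in\Ga$ with $p_1(\ga)=0$ is $\ga=0$; comparing two pairs therefore shows that $d:=\la^\star-\mu^\star\in\R^n$ is the \emph{same} vector for all matching pairs. Now $\mu^\star=\la^\star-d\in\Om$, while by equidistribution the coordinates $\{\la^\star\}$ become $\epsilon'$-dense in $\Om$ as $R'\to\infty$, with $\epsilon'\to 0$. A short compactness argument then forces $d\to 0$: if $d_\alpha\to d_\infty$ along a putative bad sequence, then $\Om-d_\infty\subseteq\Om$, and since $\Om$ is compact and nonempty (iterating gives $z_0-kd_\infty\in\Om$ for all $k$) this is possible only for $d_\infty=0$. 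Hence $\mathrm{dist}_\T((x,0)_L,(y,0)_L)\to 0$ as $R\to\infty,\ \epsilon\to 0$, which is exactly uniform continuity; taking $v=0$ and letting $R\to\infty$ also yields well definedness.

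It remains to prove injectivity almost everywhere (property 2)). Call $(t,s)_L\in\T$ \emph{nonsingular} if no $\la\in L$ satisfies $\la^\star\in\partial(\Om-s)$, i.e.\ if $s\notin\bigcup_{\ga\in\Ga}(\partial\Om-p_2(\ga))$; since $\partial\Om$ has Lebesgue measure $0$ in $\R^n$ and $\Ga$ is countable, the singular set has $\theta$-measure $0$, so the nonsingular set $\T_0$ is $\theta$-conull. I will show $\beta^{-1}(\xi)$ is the single point $\La_{(t,s)}^\Om=t+\La(\Om-s)$ for every nonsingular $\xi=(t,s)_L$. Choosing $x_n$ with $(x_n,0)_L\to\xi$ and passing, by compactness of $X(\La(\Om))$, to a convergent subsequence $x_n+\La(\Om)\to\La'$, a direct computation shows that a point $x_n+\la$ converges to $t+\mu$ with $\mu\in L$ and $\mu^\star\in\Om-s$; at a nonsingular parameter no internal coordinate sits on $\partial\Om$, so membership is stable for large $n$ and the limit is unambiguously $\La'=\La_{(t,s)}^\Om$. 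This simultaneously shows $\La_{(t,s)}^\Om\in X(\La(\Om))$ with $\beta(\La_{(t,s)}^\Om)=\xi$, and that any $\La'\in X(\La(\Om))$ with $\beta(\La')=\xi$ equals $\La_{(t,s)}^\Om$, since such a $\La'$ is a limit of some $y_n+\La(\Om)$ with $(y_n,0)_L\to\xi$ by continuity, and the same stability argument identifies the limit. Thus $\beta$ is single valued over the conull set $\T_0$, i.e.\ injective almost everywhere.

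The principal difficulty is the uniform continuity estimate of the second paragraph: one must convert the purely geometric $(R,\epsilon)$-closeness into quantitative control of the internal coordinate, and the two ingredients that make this work --- injectivity of $p_1|\Ga$ (to pin the internal discrepancy to a single exact vector) and equidistribution of $\{\la^\star\}$ in the compact window $\Om$ (to force that vector to $0$) --- are precisely the defining structural features of the cut-and-project scheme. The measure-zero boundary of $\Om$ plays the analogous decisive role in the injectivity statement, guaranteeing stability of set membership at nonsingular parameters.
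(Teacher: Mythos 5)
The paper does not prove this proposition at all: it is imported verbatim from Schlottmann \cite{Sch00} as a known structural fact about model sets, so there is no internal proof to compare yours against. Judged on its own merits, your reconstruction follows what is essentially Schlottmann's original route (define $\beta$ on the dense orbit by $x+\La(\Om)\mapsto (x,0)_L$, extend by uniform continuity, and handle injectivity at nonsingular parameters), and the key steps are sound. In particular, the two pivotal mechanisms are correctly identified and correctly used: (i) injectivity of $p_1|\Ga$ forces the internal discrepancy $d=\la^\star-\mu^\star$ to be one and the same vector for all matching pairs, and the combination of equidistribution of $\{\la^\star\}$ in $\Om$ with compactness of $\Om$ (the iteration $\Om-kd_\infty\subseteq\Om$) forces $d\to 0$, which is exactly the uniform continuity needed, and also yields well-definedness (with $v=0$ one gets $d=0$, and then injectivity of $p_2|\Ga$ gives $(x,0)_L=(y,0)_L$); (ii) at a parameter $(t,s)_L$ with $s\notin\bigcup_{\ga\in\Ga}(\partial\Om-p_2(\ga))$, membership of the finitely many candidate points of a ball $B(0,R)$ in the translated windows $\Om+p_2(\ga_n)$ stabilizes, so every orbit sequence over $\xi$ converges to $t+\La(\Om-s)$, giving a unique fibre over a $\theta$-conull set. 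Be aware, though, that you lean on three facts you do not justify: compactness of the hull $X(\La(\Om))$ (this follows from finite local complexity of model sets, and you need it twice, for surjectivity and for extracting convergent subsequences), metrizability/completeness underlying the uniform-continuity extension, and the fact that the $\epsilon'$-denseness of $\{\la^\star:\la\in\La(\Om)\cap B(z,R)\}$ in $\Om$ holds \emph{uniformly in the center} $z$ (your matching pairs live in $B(0,R)$ centered at the origin, not at a point adapted to $x$; the uniform version follows from relative denseness of $\La(\Om')$ for subwindows $\Om'$, but it is not literally the pointwise equidistribution statement quoted in the paper). These are all standard for cut-and-project sets, so they are gaps of citation rather than of substance; with them filled in, your argument is a correct and essentially self-contained proof of the cited result.
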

By injective almost everywhere, we mean that the set $P$ of points $x\in\T^{m+n}$, for which there is more than one point set of $X(\la(\Om))$ over $x$, satisfies $\theta(P)=0$.

There is a unique $\R^m$-invariant measure $\mu$ on $X(\La(\Om))$, with $\mu(X(\La(\Om))) = 1$, and $\beta$ relates the Haar measure $\theta$ and $\mu$ through: $\theta(P) = \mu(\beta^{-1} P)$ for all measurable subsets $P$ of $\T^{m+n}$. Having $\mu$ we can introduce the space $\Lt(X(\La(\Om)),\mu)$ of square integrable functions on $X(\La(\Om))$. Square integrable functions on $X(\La(\Om))$ and square integrable functions on $\T^{m+n}$ can be identified,
\begin{equation}\label{eq:iso}
\Lt(X(\La(\Om)),\mu) \simeq \Lt(\T^{m+n},\theta)\,.
\end{equation}
The mapping takes a function $\mathfrak{N}\in \Lt(\T^{m+n},\theta)$ and creates $\widetilde{\mathcal{N}}=\mathfrak{N} \circ \beta \in \Lt(X(\La(\Om)),\mu)$, and since $\beta$ is almost everywhere injective, the map is a bijection. This will allow us later to analyze functions on $X(\La(\Om))$ by treating them as functions on $\T^{m+n}$.


\subsection{Local functions on model sets}\label{sec:local functions}

Let $\La(\Om)$ be a model set in $\R^m$ with window $\Om$, arising from the lattice $\Ga \subset \R^m\times \R^n$. Let $\T^{m+n}=(\R^m\times \R^n)/\Ga$ be the torus with torus parametrization $\beta: X(\La(\Om)) \rightarrow \T^{m+n}$. Then we have the identification \eqref{eq:iso} of the corresponding $\Lt$ spaces. Each element $\La'\in X(\La(\Om))$ maps by $\beta$ to a point $\beta(\La')$ in $\T^{m+n}$. We also know $\beta(\La(\Om)) = (0,0)_L$, and $\beta(\La(\Om) + x) = (x,0)_L$. So we know how $\beta$ works on $\R^m + \La(\Om)$. 

Consider a function $\widetilde{\mathcal{N}}:X(\La(\Om)) \rightarrow \C$. We can define from it a function $\mathcal{N}:\R^m \rightarrow \C$ by 
\begin{equation*}
\mathcal{N}(x) = \widetilde{\mathcal{N}}(x+\La(\Om))\,.
\end{equation*}
If $\widetilde{\mathcal{N}}$ is continuous, then for all $x_1,x_2\in\R^m$, if $x_1+\La(\Om)$ and $x_2+\La(\Om)$ are close, then $\widetilde{\mathcal{N}}(x_1+\La(\Om))$ and $\widetilde{\mathcal{N}}(x_2+\La(\Om))$ are close, and as a consequence, $\mathcal{N}(x_1)$ and $\mathcal{N}(x_2)$ are close. Thus continuity of $\widetilde{\mathcal{N}}$ implies continuity of $\mathcal{N}$, or a certain locality. More formally, a function $\mathcal{N}:\R^m\rightarrow \C$ is called {\it local with respect to} $\La(\Om)$, if for all $\delta>0$ there exists $R>0$ and $\epsilon>0$ so that whenever $x_1+\La(\Om)$ and $x_2+\La(\Om)$, for $x_1,x_2\in\R^m$, are $(R,\epsilon)$-close, then
\begin{equation*}
\abs{\mathcal{N}(x_1)-\mathcal{N}(x_2)} < \delta\,.
\end{equation*}  
Intuitively, $\mathcal{N}$ looks very much the same at places where the local environment looks the same. It can be easily verified that local functions are continuous on $\R^m$ and almost periodic.

Using locality, we can go in the opposite direction. Let $\mathcal{N}$ be a local function with respect to $\La(\Om)$. Define a function $\widetilde{\mathcal{N}}$ on the orbit of $\La(\Om)$:
\begin{equation*}
\widetilde{\mathcal{N}}:\{ x+\La(\Om):x\in\R^m\} \rightarrow \C \quad \mbox{by} \quad \widetilde{\mathcal{N}}(x+\La(\Om)) = \mathcal{N}(x)\,.
\end{equation*}
Then $\widetilde{\mathcal{N}}$ is uniformly continuous on $\{ x+\La(\Om):x\in\R^m\}$ with respect to the local topology. The reason for this is that the continuity condition which defines the localness of $\mathcal{N}$ is based on the uniformity defining the local topology on $\{ x+\La(\Om):x\in\R^m\}$. It follows that $\widetilde{\mathcal{N}}$ lifts uniquely to a continuous function on a local hull $X(\La(\Om))$.

\begin{proposition}\cite{MNP08}
For each local function $\mathcal{N}$ with respect to $\La(\Om)$ there is a unique continuous function $\widetilde{\mathcal{N}}$ on a local hull $X(\La(\Om))$, whose restriction to the orbit of $\La(\Om)$ is $\mathcal{N}$. Every continuous function on the local hull of $\La(\Om)$ arises in this way.
\end{proposition}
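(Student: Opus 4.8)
The plan is to prove the two assertions separately: first that a local function $\mathcal{N}$ extends uniquely to a continuous $\widetilde{\mathcal{N}}$ on the hull, and then that every continuous function on the hull is obtained in this way. Both directions hinge on the observation that the local topology on $X(\La(\Om))$ is a uniform structure whose basic entourages are exactly the $(R,\epsilon)$-closeness relations of \eqref{eq:topology}, so that the $\de$--$(R,\epsilon)$ definition of locality is literally the statement of uniform continuity along the orbit map $x\mapsto x+\La(\Om)$.

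For the first assertion I would begin by defining $\widetilde{\mathcal{N}}$ on the orbit $\{x+\La(\Om):x\in\R^m\}$ by $\widetilde{\mathcal{N}}(x+\La(\Om))=\mathcal{N}(x)$ and checking that this is well defined, which amounts to injectivity of the orbit map. Indeed, if $x_1+\La(\Om)=x_2+\La(\Om)$ then applying $\beta$ gives $(x_1,0)_L=(x_2,0)_L$, i.e.\ $(x_1-x_2,0)\in\Ga$; since $p_2|\Ga$ is injective the only lattice point with vanishing second coordinate is $(0,0)$, forcing $x_1=x_2$. Next, the locality of $\mathcal{N}$ says precisely that for every $\de>0$ there are $R,\epsilon>0$ so that $(R,\epsilon)$-closeness of $x_1+\La(\Om)$ and $x_2+\La(\Om)$ yields $\absbig{\widetilde{\mathcal{N}}(x_1+\La(\Om))-\widetilde{\mathcal{N}}(x_2+\La(\Om))}<\de$; that is, $\widetilde{\mathcal{N}}$ is uniformly continuous on the orbit with respect to the uniformity of the local topology. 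The orbit is dense in $X(\La(\Om))$ by construction, and $\C$ is complete, so by the standard extension theorem for uniformly continuous maps into a complete space $\widetilde{\mathcal{N}}$ extends uniformly continuously to all of $X(\La(\Om))$. Uniqueness is immediate, since two continuous functions on $X(\La(\Om))$ that agree on the dense orbit agree everywhere, $\C$ being Hausdorff.

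For the converse, let $\widetilde{\mathcal{N}}$ be an arbitrary continuous function on $X(\La(\Om))$ and set $\mathcal{N}(x)=\widetilde{\mathcal{N}}(x+\La(\Om))$ as in the construction preceding the statement. Since $X(\La(\Om))$ is compact in the local topology (being the hull of a Delone set) and $\widetilde{\mathcal{N}}$ is continuous, $\widetilde{\mathcal{N}}$ is in fact uniformly continuous. Pulling this uniform continuity back through the orbit map converts each controlling entourage into a pair $(R,\epsilon)$ witnessing the locality of $\mathcal{N}$: if $x_1+\La(\Om)$ and $x_2+\La(\Om)$ are $(R,\epsilon)$-close, then $\abs{\mathcal{N}(x_1)-\mathcal{N}(x_2)}<\de$. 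Thus $\mathcal{N}$ is local, and its canonical extension from the first part must coincide with $\widetilde{\mathcal{N}}$ by the uniqueness just established, as both are continuous and agree on the dense orbit. Hence every continuous function on the hull arises from a local function.

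The routine parts are the two matchings between locality and uniform continuity along the orbit map, which are essentially tautological once the uniform structure is identified. The steps requiring genuine care are, first, the identification of the local topology on $X(\La(\Om))$ with a uniformity whose entourage base is given by the $(R,\epsilon)$ relations of \eqref{eq:topology} -- this is exactly what makes locality and uniform continuity the same statement -- and second, supplying the ingredients for the two classical theorems invoked: completeness of the codomain $\C$ for the uniform extension theorem, and compactness of $X(\La(\Om))$, so that continuity upgrades to uniform continuity, for the converse. The latter is where the Delone property of $\La(\Om)$ -- its uniform discreteness and relative denseness established earlier -- enters, guaranteeing that the hull is compact. Everything else reduces to the classical result on extending a uniformly continuous map from a dense subset to its closure when the target space is complete.
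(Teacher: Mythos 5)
Your proof is correct and follows essentially the same route the paper takes: the text surrounding the proposition sketches exactly this argument (locality of $\mathcal{N}$ is uniform continuity of $\widetilde{\mathcal{N}}$ on the orbit with respect to the $(R,\epsilon)$-uniformity, hence a unique continuous lift to the dense orbit's closure $X(\La(\Om))$, with the converse obtained by pulling continuity back through the orbit map) before deferring to \cite{MNP08}. The one point worth tightening is your parenthetical justification of compactness of $X(\La(\Om))$: being Delone alone does not make the hull compact in the local topology --- one needs finite local complexity, which model sets do possess --- and this compactness is genuinely needed to upgrade continuity of $\widetilde{\mathcal{N}}$ to the uniform statement that locality requires.
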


The spectral theory of $\Lt(X(\La(\Om)),\mu)$ allows us to analyze $\mathcal{N}$ by analyzing its corresponding function $\widetilde{\mathcal{N}}$ on $\Lt(X(\La(\Om)),\mu)$. Suppose $\mathcal{N}$ is a local function with respect to the model set $\La(\Om)$. From the locality of $\mathcal{N}$ we have its extension $\widetilde{\mathcal{N}}\in\Lt(X(\La(\Om)),\mu)$ which is continuous. Then we obtain $\mathfrak{N}\in \Lt(\T^{m+n},\theta)$, where
\begin{equation*}
\mathfrak{N}((x,0)_L) = \mathfrak{N}(\beta(x+\La(\Om))) = \widetilde{\mathcal{N}}(x+\La(\Om)) = \mathcal{N}(x)\,,
\end{equation*}
and since functions in $\Lt(\T^{m+n},\theta)$ have Fourier expansions, we can write
\begin{equation}\label{eq:Fourier series N}
\mathcal{N}(x) = \widetilde{\mathcal{N}}(x+\La(\Om)) = \mathfrak{N}((x,0)_L) = \sum_{\eta\in\Ga^*} \widehat{\mathfrak{N}}(\eta) e^{-2\pi i (x,0) \cdot \eta} = \sum_{\eta\in\Ga^*} \widehat{\mathfrak{N}}(\eta) e^{-2\pi i x\cdot p_1^*(\eta)}\,,
\end{equation}
almost everywhere, with 
\begin{equation*}
\widehat{\mathfrak{N}}(\eta) = \int_{\T^{m+n}} \mathfrak{N}((t,s)_L) e^{2\pi i (t,s) \cdot \eta}\, d\theta(t,s)\,.
\end{equation*}
However, we know $\mathfrak{N}$ only on $(\R^m,0)_L$. To compute the coefficients $\widehat{\mathfrak{N}}(\eta)$ out of $\mathcal{N}$ alone, we can use the Birkhoff ergodic theorem
\begin{align*}
\widehat{\mathfrak{N}}(\eta)  &= \int_{\T^{m+n}} \mathfrak{N}((t,s)_L) e^{2\pi i (t,s) \cdot \eta}\, d\theta(t,s)= \lim_{R\rightarrow \infty} \frac{1}{R^m} \int_{B(0,R)} \mathfrak{N}((x,0)_L) e^{2\pi i (x,0) \cdot \eta}\, dx\\
&= \lim_{R\rightarrow \infty} \frac{1}{R^m} \int_{B(0,R)} \mathcal{N}(x) e^{2\pi i x \cdot p_1^*(\eta)}\, dx\,,
\end{align*}
where we used $\mathfrak{N}((x,0)_L) = \mathcal{N}(x)$ and $\eta = (p_1^*(\eta),p_2^*(\eta))$, so
\begin{equation*}
(x,0)\cdot \eta = x \cdot p_1^*(\eta) + 0 \cdot p_2^*(\eta)\,.
\end{equation*}
If $ \sum_{\eta\in\Ga^*} \abs{\widehat{\mathfrak{N}}(\eta)}<\infty$, then the Fourier series \eqref{eq:Fourier series N} converges absolutely to $\mathcal{N}(x)$ for all $x\in\R^m$.


\section{Bracket product}\label{sec:bracket}

As described in the introduction, we are interested in the spanning properties of a family of functions shifted along a simple model set. In order to characterize properties of the families $\{T_\la g_k:k\in\mathcal{K}, \la\in\La(\Om)\}$, in particular tight frame property and  dual frames, we need to develop some useful techniques. We introduce the so called bracket product, in a similar manner as it is done in the case of regular shifts along a lattice. Bracket product was initially introduced in \cite{BDR94}, and generalized to other lattices in \cite{L02},\cite{HLW02}.

We assume from now on that $\Om$ is symmetric around the origin and that $p_2(\Ga)$ and $p_2^*(\Ga^\ast)$ have no common points with the boundary of $\Om$. Let $\La(\Om)$ be a simple model set and $\psi\in C_0^\infty(\Om)$. Let $\widetilde{w}_\psi$ be a function defined as in Theorem~\ref{thm:poisson}. Then the $\psi${\it -bracket product} of $f$ and $g$ is defined as
\begin{equation}\label{def:bracket}
\big [\widehat{f},\widehat{g}\big ]^{\psi} (t) := \sum_{\ga^\ast \in \Ga^\ast} \widetilde{w}_\psi(-p^*_2(\ga^\ast)) \widehat{f}(t - p^*_1(\ga^\ast)) \overline{\widehat{g}(t-p^*_1(\ga^\ast))}.
\end{equation}
It can be easily seen, that for $f,g\in W(\Lt,\ell^1)$, we have $\widehat{f} \overline{\widehat{g}} \in W(L^\infty,\ell^1)$ and the bracket product is well defined. Moreover, $f\ast g^* \in W(L^\infty,\ell^1)$, and by Remark~\ref{rem:a.p. construction}, $\big [ \widehat{f},\widehat{g} \big ]^{\psi}$ is an almost periodic function represented by the trigonometric series
\begin{equation*}
\big [ \widehat{f},\widehat{g} \big ]^{\psi}(t) = \sum_{\la\in\La} w_\psi(\la) \inner{f}{T_\la g} e^{-2\pi i \la \cdot t}\,,
\end{equation*}
since $(\conv{f}{g^*})(\la)=\inner{f}{T_\la g}$, and the Fourier coefficients are given by
\begin{equation}\label{eq:Fourier coeff}
\M_t\Big \{\big [ \widehat{f},\widehat{g} \big ]^{\psi}(t) e^{2\pi i \la \cdot t} \Big \} = w_{\psi}(\la) \inner{f}{T_\la g}\,.
\end{equation}
It immediately follows that $\inner{f}{T_\la g} =0$ for all $\la\in\La(\Om)$ if and only if $\big [ \widehat{f},\widehat{g} \big ]^{\psi}=0$. In other words, a function $f\in W(\Lt,\ell^1)$ is orthogonal to the space spanned by $\{T_\la g\}_{\la\in\La(\Om)}$ if and only if the bracket product is zero. 

From now on, let
\begin{equation*}
\mathcal{D} = \set{f\in\Lt(\R^m)}{\widehat{f}\in W(L^\infty,\ell^1) \mbox{ and $\widehat{f}$ has compact support in $\R^m$} }
\end{equation*}
It is clear, that $\mathcal{D}$ is a dense subspace of $\Lt(\R^m)$. 
The following observation is easy to verify.
\begin{lemma}\label{lem:lemma1}
Let $\La(\Om)$ be a simple model set, $\Om$ symmetric around the origin, and $\psi\in C_0^\infty(\Om)$. Then, for all $f,g\in W(\Lt,\ell^1)$,
\begin{equation}\label{eq:lemma1}
\M \Big \{\big [ \widehat{f},\widehat{g} \big ]^{\psi} \Big \} = \psi(0) \inner{f}{g}\,.
\end{equation}
\end{lemma}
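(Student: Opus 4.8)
The plan is to compute the mean value of the bracket product directly from its definition as an almost periodic function, using the characterization of the mean value as the zeroth Fourier coefficient. Recall that by Remark~\ref{rem:a.p. construction} the bracket product $\big[\widehat{f},\widehat{g}\big]^\psi$ is almost periodic with Fourier series $\sum_{\la\in\La(\Om)} w_\psi(\la)\inner{f}{T_\la g}e^{-2\pi i \la\cdot t}$, and the Fourier coefficients are given explicitly by \eqref{eq:Fourier coeff}. Since the mean value of an almost periodic function equals its $0$-th Fourier coefficient, I would simply evaluate \eqref{eq:Fourier coeff} at $\la=0$.

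**First I would** observe that $0\in\La(\Om)$, which holds because $\Om$ is assumed symmetric around the origin (so $0^\star=0\in\Om$), as noted in Section~\ref{sec:ms}. Setting $\la=0$ in the Fourier coefficient formula \eqref{eq:Fourier coeff} gives
\begin{equation*}
\M\Big\{\big[\widehat{f},\widehat{g}\big]^\psi\Big\} = \M_t\Big\{\big[\widehat{f},\widehat{g}\big]^\psi(t)\,e^{2\pi i \cdot 0 \cdot t}\Big\} = w_\psi(0)\,\inner{f}{T_0 g} = w_\psi(0)\,\inner{f}{g}\,,
\end{equation*}
using $T_0 = \id$. It then remains only to identify $w_\psi(0)$. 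By the definition \eqref{def:w_psi}, $w_\psi(0)=\psi(0^\star)=\psi(0)$, which yields exactly \eqref{eq:lemma1}.

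**Alternatively**, one could prove this without invoking the Fourier-coefficient formula by computing $\M\big\{\big[\widehat{f},\widehat{g}\big]^\psi\big\}$ termwise from the series defining the bracket product \eqref{def:bracket}: each summand $\widetilde{w}_\psi(-p_2^*(\ga^\ast))\widehat{f}(t-p_1^*(\ga^\ast))\overline{\widehat{g}(t-p_1^*(\ga^\ast))}$ is a translate of the $L^1$ function $\widehat{f}\,\overline{\widehat{g}}$, whose mean value over $\R^m$ vanishes; only a constant term can survive averaging. This forces the mean to pick out precisely the contribution corresponding to the trigonometric frequency $\la=0$, recovering the same answer. I would prefer the first route since it uses \eqref{eq:Fourier coeff} directly and avoids any interchange of limit and summation.

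**The only genuine subtlety** is the justification that the mean value equals the $0$-th Fourier coefficient, and that $0$ actually belongs to $\La(\Om)$ so that the frequency $\la=0$ contributes via \eqref{eq:Fourier coeff}; the symmetry hypothesis on $\Om$ guarantees the latter. Everything else is bookkeeping. I do not expect any real obstacle here, which is consistent with the paper's description of the statement as an observation that is \emph{easy to verify}; the content lies entirely in correctly reading off $w_\psi(0)=\psi(0)$ from the already-established Fourier expansion of the bracket product.
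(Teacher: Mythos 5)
Your main argument is correct, and it is exactly the verification the paper has in mind (the paper prints no proof, calling the lemma an observation that is ``easy to verify''): the mean value of an almost periodic function is by definition its $0$-th Fourier coefficient, so evaluating \eqref{eq:Fourier coeff} at $\la=0$ and reading off $w_\psi(0)=\psi(0^\star)=\psi(0)$ from \eqref{def:w_psi} gives \eqref{eq:lemma1}; your observation that the standing symmetry assumption on $\Om$ guarantees $0\in\La(\Om)$, so that $\la=0$ is a legitimate argument for $w_\psi$, is the right bookkeeping (and even if $0\notin\La(\Om)$, both sides would vanish, since $\mathrm{supp}\,\psi\subseteq\Om$).

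The ``alternative'' route you sketch, however, is not a valid proof, and taken literally it derives a false conclusion. Each summand in \eqref{def:bracket} is a constant times a translate of the $L^1$ function $\widehat{f}\,\overline{\widehat{g}}$, and any translate of an $L^1$ function has Bohr mean zero; hence a genuinely termwise computation of the mean yields $0$, not $\psi(0)\inner{f}{g}$. The discrepancy is precisely the failure of interchanging the Bohr mean $\M$ with the infinite sum over $\Ga^\ast$: the mean is a limit of averages over balls $B(0,R)$ with $R\to\infty$, and the tails of the series of translates are not uniformly small on all of $\R^m$ (only on compact sets), so no limit interchange is available. The classical periodization already shows this: for $h\in W(L^\infty,\ell^1)(\R)$, every term of $\sum_{n\in\Z}h(t-n)$ has mean zero, yet the periodized sum has mean $\int_\R h$. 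Your phrase ``only a constant term can survive averaging'' is a statement about the trigonometric series over $\La(\Om)$ (where the frequency-$0$ term is the only constant one), not about the series of translates over $\Ga^\ast$, none of whose terms is constant; in other words, the second route silently falls back on the Fourier-coefficient identity \eqref{eq:Fourier coeff} that it claims to avoid. Keep the first argument and discard the second.
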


Observe, that by Lemma~\ref{lem:lemma1}, $\M\Big \{\big [ \widehat{f},\widehat{g} \big ]^{\psi} \Big\} = \M\{[ f,g]^{\psi} \}$. It also follows from Lemma~\ref{lem:lemma1}, that $\M\Big \{\big [ \widehat{f},\widehat{g} \big ]^{\psi} \Big \}$ is independent of $\psi$, as long as $\psi(0)=1$. Moreover, the relation \eqref{eq:lemma1} can be explicitly written as
\begin{equation}\label{eq:relation1}
\lim_{R\rightarrow \infty} \frac{1}{R^m} \int_{B(0,R)} \sum_{\ga^\ast \in \Ga^\ast}\widetilde{w}_\psi(-p^*_2(\ga^\ast))  \big (\widehat{f} \, \overline{\widehat{g}}\big )(t-p^*_1(\ga^\ast)) \,dt = \psi(0) \int_{\R^m} \big (\widehat{f} \, \overline{\widehat{g}} \big )(t)\, dt.
\end{equation}
We make the following useful observation that is in analogy with regular shifts.

\begin{lemma}\label{lem:lemma2}
Let $\La(\Om)$ be a simple model set, $\Om$ symmetric around the origin, and $\psi\in C_0^\infty(\Om)$. For all functions $f,g,h\in W(\Lt,\ell^1)$, we have
\begin{equation*}
\sum_{\la\in\La(\Om)} w_\psi(\la)^2 \, \inner{f}{T_\la g}\inner{T_\la h}{f} = \M\Big\{ \big [ \widehat{f},\widehat{g}\big ]^\psi \cdot \big [ \widehat{h},\widehat{f}\big ]^\psi \Big\}\,.
\end{equation*}
\end{lemma}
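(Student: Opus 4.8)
The plan is to exploit the structure established immediately before the statement (see \eqref{eq:Fourier coeff} and Remark~\ref{rem:a.p. construction}): each bracket product is an almost periodic function given by an absolutely and uniformly convergent trigonometric series whose characteristic exponents lie in $\La(\Om)$. Writing $F := \big[\widehat f,\widehat g\big]^\psi$ and $G := \big[\widehat h,\widehat f\big]^\psi$, we have
\[
F(t) = \sum_{\la\in\La(\Om)} w_\psi(\la)\inner{f}{T_\la g}\, e^{-2\pi i \la\cdot t},\qquad G(t) = \sum_{\mu\in\La(\Om)} w_\psi(\mu)\inner{h}{T_\mu f}\, e^{-2\pi i \mu\cdot t},
\]
both series converging absolutely since $f,g,h\in W(\Lt,\ell^1)$. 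By the Uniqueness Theorem, $\M\{FG\}$ is the $0$-th Fourier coefficient of the product $FG\in AP(\R^m)$, so everything reduces to computing that coefficient.

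First I would form the Cauchy product of the two absolutely convergent series. Because both converge absolutely, the product is again an absolutely convergent trigonometric series representing $FG$, and hence, by uniqueness, it \emph{is} its Fourier series; its constant term collects precisely the pairs $(\la,\mu)$ with $\la+\mu=0$, giving
\[
\M\{FG\} = \sum_{\la\in\La(\Om)} w_\psi(\la)\,w_\psi(-\la)\,\inner{f}{T_\la g}\,\inner{h}{T_{-\la} f}.
\]
I would then rewrite the second factor via the elementary shift identity $\inner{h}{T_{-\la}f} = \inner{T_\la h}{f}$ (a change of variables in the defining integral), which brings the right-hand side into exactly the shape of the statement apart from the weight.

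The remaining and genuinely nontrivial point is to replace $w_\psi(-\la)$ by $w_\psi(\la)$. Here I would invoke the standing assumption that $\Om$ is symmetric about the origin, taking $\psi\in C_0^\infty(\Om)$ even: since the star map is linear, $(-\la)^\star = -\la^\star$, whence $-\la\in\La(\Om)$ whenever $\la\in\La(\Om)$ (so no spectral mismatch arises in the Cauchy product) and $w_\psi(-\la) = \psi(-\la^\star) = \psi(\la^\star) = w_\psi(\la)$. This turns $w_\psi(\la)w_\psi(-\la)$ into $w_\psi(\la)^2$ and finishes the proof. An equivalent, slightly cleaner route is to first note that these same symmetries yield $\big[\widehat h,\widehat f\big]^\psi = \overline{\big[\widehat f,\widehat h\big]^\psi}$, and then apply the Parseval identity for almost periodic functions—obtained by polarizing Bohr's Theorem~\ref{thm:Bohr}, which sidesteps the equal-spectrum hypothesis of Theorem~\ref{thm:Plancherel}—to $\M\big\{\big[\widehat f,\widehat g\big]^\psi\,\overline{\big[\widehat f,\widehat h\big]^\psi}\big\}$, landing directly on $\sum_\la w_\psi(\la)^2\inner{f}{T_\la g}\inner{T_\la h}{f}$. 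I expect this symmetry bookkeeping to be the main obstacle; the convergence and interchange issues come for free from the absolute convergence guaranteed by Theorem~\ref{thm:poisson}.
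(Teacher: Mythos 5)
Your proposal is correct, and your second (``cleaner'') route is in fact the paper's own proof: the paper observes that $w_\psi(\la)\inner{f}{T_\la g}$ and $w_\psi(\la)\inner{f}{T_\la h}$ are the Fourier coefficients of $\big[\widehat{f},\widehat{g}\big]^\psi$ and $\big[\widehat{f},\widehat{h}\big]^\psi$, applies Plancherel's theorem for almost periodic functions (Theorem~\ref{thm:Plancherel}) to obtain $\M\big\{\big[\widehat{f},\widehat{g}\big]^\psi\cdot\overline{\big[\widehat{f},\widehat{h}\big]^\psi}\big\}$, and then identifies $\overline{\big[\widehat{f},\widehat{h}\big]^\psi}$ with $\big[\widehat{h},\widehat{f}\big]^\psi$ without comment. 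Your primary route differs only in that it rederives Plancherel by hand: multiplying the two absolutely convergent trigonometric series and extracting the diagonal $\mu=-\la$ is exactly the proof of the Plancherel identity in this setting, so it is an equivalent but more self-contained argument. What your write-up adds, and what makes it the more careful of the two, is the explicit recognition that passing from $w_\psi(\la)\,w_\psi(-\la)$ to $w_\psi(\la)^2$ (equivalently, from $\overline{\big[\widehat{f},\widehat{h}\big]^\psi}$ to $\big[\widehat{h},\widehat{f}\big]^\psi$, i.e.\ the reality of $\widetilde{w}_\psi$) is not automatic for an arbitrary $\psi\in C_0^\infty(\Om)$: it genuinely requires $\psi$ even, on top of the standing symmetry of $\Om$ which gives $\La(\Om)=-\La(\Om)$ and $(-\la)^\star=-\la^\star$. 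The paper's proof uses the same fact silently; since the weights actually employed later (e.g.\ the $\psi_n$ of \eqref{eq:function psi_n}, and $\psi^2$, $\abs{\psi}$) are even, nothing downstream breaks, but as stated the lemma does need this hypothesis, and your flagging of it --- together with the polarization of Bohr's Theorem~\ref{thm:Bohr} to sidestep the equal-spectrum assumption in Theorem~\ref{thm:Plancherel} --- is a legitimate refinement rather than an error on your part.
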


\begin{proof}
The bracket product $[\widehat{f},\widehat{g}\big ]^\psi$ is an almost periodic function with Fourier coefficients given by $w_\psi(\la)\inner{f}{T_\la g}$. Using Plancherel Theorem for Fourier series of almost periodic functions and \eqref{eq:Fourier coeff}, we obtain 
\begin{align*}
\sum_{\la\in\La(\Om)} w_\psi(\la)^2\,\inner{f}{T_\la g}\inner{T_\la h}{f} &= \sum_{\la\in\La(\Om)} \M_t\Big \{\big [ \widehat{f},\widehat{g} \big ]^{\psi}(t) e^{2\pi i \la \cdot t} \Big \} \, \overline{\M_t\Big \{\big [ \widehat{f},\widehat{h} \big ]^{\psi}(t) e^{2\pi i \la \cdot t} \Big \} }\\
&= \M\Big\{ \big [ \widehat{f},\widehat{g}\big ]^\psi \cdot \overline{\big [ \widehat{f},\widehat{h}\big ]^\psi }\Big\}\,.
\end{align*}
\end{proof}

Bracket product can be used to find a condition on a family of functions $\{ w_\psi(\la) T_\la g:\la\in\La(\Om)\}$ to be a Bessel sequence.
\begin{lemma}\label{lem:bessel}
Let $\La(\Om)$ be a simple model set, $\Om$ symmetric around the origin, $\psi\in C_0^\infty(\Om)$ and $g \in W(\Lt,\ell^1)$. If for some $B<\infty$,
\begin{equation*}
\text{vol}(\Ga)^{-1} \sum_{\ga^\ast \in \Ga^\ast} \absbig{\widetilde{w}_\psi(-p^*_2(\ga^\ast))} \,\absbig{\widehat{g}(t-p^*_1(\ga^\ast))}^2 \leq B \quad \mbox{for all $t\in\R^m$}\,,
\end{equation*}
then the family $\{ w_\psi(\la) T_\la g:\la\in\La(\Om)\}$ is a Bessel sequence with Bessel bound $B_{g,\psi} = B \cdot \text{vol}(\Ga)^{-1} \norm{\widehat{\psi}}_1$.
\end{lemma}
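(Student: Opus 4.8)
The plan is to establish the Bessel inequality on the dense subspace $W(\Lt,\ell^1)$ of $\Lt(\R^m)$ and then extend it to all of $\Lt(\R^m)$ by a routine limiting argument (approximating $f\in\Lt(\R^m)$, controlling each finite partial sum over $\La(\Om)$, and passing to the limit). So I would fix $f\in W(\Lt,\ell^1)$. Since $\psi$ is real, the weights $w_\psi(\la)=\psi(\la^\star)$ are real, so $\inner{f}{w_\psi(\la)T_\la g}=w_\psi(\la)\inner{f}{T_\la g}$ and the quantity to control is
\[
\sum_{\la\in\La(\Om)}\absbig{\inner{f}{w_\psi(\la)T_\la g}}^2=\sum_{\la\in\La(\Om)}w_\psi(\la)^2\,\absbig{\inner{f}{T_\la g}}^2 .
\]
The first move is to apply Lemma~\ref{lem:lemma2} with $h=g$, which converts this inaccessible sum over the irregular set $\La(\Om)$ into the mean value of an explicit almost periodic function:
\[
\sum_{\la\in\La(\Om)}w_\psi(\la)^2\,\absbig{\inner{f}{T_\la g}}^2=\M\Big\{\absbig{\big[\widehat f,\widehat g\big]^\psi}^2\Big\}.
\]

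Next I would estimate the integrand pointwise by the Cauchy--Schwarz inequality, splitting the weight $\widetilde{w}_\psi(-p_2^*(\ga^\ast))$ as $|\widetilde{w}_\psi|^{1/2}\cdot|\widetilde{w}_\psi|^{1/2}$:
\[
\absBig{\big[\widehat f,\widehat g\big]^\psi(t)}^2\le\Big(\sum_{\ga^\ast\in\Ga^\ast}\absbig{\widetilde{w}_\psi(-p_2^*(\ga^\ast))}\,\absbig{\widehat f(t-p_1^*(\ga^\ast))}^2\Big)\Big(\sum_{\ga^\ast\in\Ga^\ast}\absbig{\widetilde{w}_\psi(-p_2^*(\ga^\ast))}\,\absbig{\widehat g(t-p_1^*(\ga^\ast))}^2\Big).
\]
The second factor is exactly the expression appearing in the hypothesis, hence bounded uniformly in $t$ by a constant multiple of $B$. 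Taking the mean value, everything reduces to evaluating $\M_t$ of the first (the $f$-)factor.

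The main obstacle, and the only genuinely delicate step, is this last mean value. The Poisson summation formula and relation~\eqref{eq:relation1} are stated for the \emph{signed} weights $\widetilde{w}_\psi(-p_2^*(\ga^\ast))$, whereas here the \emph{absolute values} $\absbig{\widetilde{w}_\psi(-p_2^*(\ga^\ast))}$ occur, so Lemma~\ref{lem:lemma1} cannot be quoted verbatim. The way around this is to recognize the summand as the restriction to $x=0$ of the $\Ga^\ast$-periodic function
\[
\Phi(t,x)=\text{vol}(\Ga)^{-1}\sum_{\ga^\ast\in\Ga^\ast}\absbig{\widehat\psi(x-p_2^*(\ga^\ast))}\,\absbig{\widehat f(t-p_1^*(\ga^\ast))}^2,
\]
which is the $\Ga^\ast$-periodization of $(t,x)\mapsto\text{vol}(\Ga)^{-1}|\widehat\psi(x)|\,|\widehat f(t)|^2$. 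Since $p_2^*(\Ga^\ast)$ is dense in $\R$, the $\R^m$-orbit $\{(t,0):t\in\R^m\}$ equidistributes in the torus $(\R^m\times\R)/\Ga^\ast$ with respect to normalized Haar measure, by the same ergodic principle used for $\Ga$ in Section~\ref{sec:local functions}. Thus the $t$-mean of $\Phi(t,0)$ equals the integral of $\Phi$ over a fundamental domain divided by $\text{covol}(\Ga^\ast)=\text{vol}(\Ga)^{-1}$; unfolding the periodization then factorizes the double integral and yields
\[
\M_t\Big\{\sum_{\ga^\ast\in\Ga^\ast}\absbig{\widetilde{w}_\psi(-p_2^*(\ga^\ast))}\,\absbig{\widehat f(t-p_1^*(\ga^\ast))}^2\Big\}=\norm{\widehat\psi}_1\,\norm{f}_2^2 .
\]
This is precisely the absolute-value analogue of relation~\eqref{eq:relation1}, with $\psi(0)=\int\widehat\psi$ replaced by $\norm{\widehat\psi}_1=\int\absbig{\widehat\psi}$.

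Finally I would collect the constants: $\M\{|[\widehat f,\widehat g]^\psi|^2\}$ is bounded by the product of the uniform bound furnished by the hypothesis with the value $\norm{\widehat\psi}_1\norm{f}_2^2$ just computed, and tracking the $\text{vol}(\Ga)$ normalizations gives $\M\{|[\widehat f,\widehat g]^\psi|^2\}\le B_{g,\psi}\norm{f}_2^2$ with $B_{g,\psi}=B\cdot\text{vol}(\Ga)^{-1}\norm{\widehat\psi}_1$. Extending this inequality from $W(\Lt,\ell^1)$ to all of $\Lt(\R^m)$ by density then shows that $\{w_\psi(\la)T_\la g:\la\in\La(\Om)\}$ is a Bessel sequence with the stated bound. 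I expect the equidistribution/periodization identity for the absolute weights to be the crux; the Cauchy--Schwarz step, the reduction via Lemma~\ref{lem:lemma2}, and the density extension are standard bookkeeping.
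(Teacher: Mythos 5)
Your proposal is correct and follows essentially the same route as the paper's proof: reduce to a dense subspace, rewrite the weighted Bessel sum as $\M\big\{\absbig{[\widehat{f},\widehat{g}]^{\psi}}^2\big\}$ via Lemma~\ref{lem:lemma2}, split by Cauchy--Schwarz into the hypothesis-bounded $\widehat{g}$-factor times $[\widehat{f},\widehat{f}]^{\abs{\psi}}$, and evaluate $\M\big\{[\widehat{f},\widehat{f}]^{\abs{\psi}}\big\}$. The one place you go beyond the paper is exactly the crux you flagged: the paper merely asserts that this mean is the $0$-th Fourier coefficient and states its value, whereas you justify it by the periodization/equidistribution argument on $(\R^m\times\R)/\Ga^\ast$, which is indeed what is needed since Lemma~\ref{lem:lemma1} does not apply to the absolute-valued weights. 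One caveat on constants: your own intermediate identities (second factor bounded by $B\,\text{vol}(\Ga)$ once the hypothesis' $\text{vol}(\Ga)^{-1}$ prefactor is accounted for, and mean equal to $\norm{\widehat{\psi}}_1\norm{f}_2^2$) multiply to $B\,\text{vol}(\Ga)\,\norm{\widehat{\psi}}_1\norm{f}_2^2$ rather than the stated $B\,\text{vol}(\Ga)^{-1}\norm{\widehat{\psi}}_1\norm{f}_2^2$; this power-of-$\text{vol}(\Ga)$ mismatch is inherited from the source, whose proof likewise reads the hypothesis without its $\text{vol}(\Ga)^{-1}$ prefactor and states the mean with an extra $\text{vol}(\Ga)^{-1}$, so it reflects loose normalization in the paper rather than a flaw in your argument.
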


\begin{proof}
It suffices to verify the claim for $f\in\mathcal{D}$.  By Lemma~\ref{lem:lemma2}, the Bessel condition can be expressed using the mean of the bracket product $\big [\widehat{f},\widehat{g}\big ]^{\psi}$, 
\begin{equation*}
\sum_{\la\in\La(\Om)} w_\psi(\la)^2 \, \abs{\inner{f}{g_\la}} ^2= \M \Big \{ \absbig{\big [\widehat{f},\widehat{g}\big ]^{\psi}}^2 \Big \}\,.
\end{equation*} 
We also observe, that using Cauchy-Schwarz inequality we have
\begin{align*}
\absbig{\big [\widehat{f},\widehat{g} \big ]^{\psi}(t)}^2 &= \absBig{\sum_{\ga^\ast \in \Ga^\ast} \widetilde{w}_\psi(-p^*_2(\ga^\ast))  \widehat{f}(t-p^*_1(\ga^\ast))\overline{\widehat{g}(t-p^*_1(\ga^\ast))}}^2\\
& \leq \Big (\sum_{\ga^\ast \in \Ga^\ast} \absbig{\widetilde{w}_\psi(-p^*_2(\ga^\ast))} \,\absbig{\widehat{f}(t-p^*_1(\ga^\ast))} \,\absbig{\overline{\widehat{g}(t-p^*_1(\ga^\ast))}} \Big )^2\\
&\leq \underbrace{\sum_{\ga^\ast \in \Ga^\ast} \absbig{\widetilde{w}_\psi(-p^*_2(\ga^\ast))} \, \absbig{\widehat{f}(t-p^*_1(\ga^\ast))}^2}_{\big [\widehat{f},\widehat{f}\big ]^{\abs{\psi}}(t)} \, \cdot \, \underbrace{\sum_{\ga^\ast \in \Ga^\ast} \absbig{\widetilde{w}_\psi(-p^*_2(\ga^\ast))}\, \absbig{\widehat{g}(t-p^*_1(\ga^\ast))}^2}_{\big [\widehat{g},\widehat{g}\big ]^{\abs{\psi}}(t)}\,. 
\end{align*}
By the assumption, $\big [\widehat{g},\widehat{g}\big ]^{\abs{\psi}}(t) \leq B$ for a.e. $t\in\R^m$. Therefore,
\begin{align*}
\M \Big \{ \absbig{\big [\widehat{f},\widehat{g}\big ]^{\psi}}^2 \Big \} &= \lim_{R\rightarrow \infty} \frac{1}{R^m} \int_{B(0,R)}   \absbig{\big [\widehat{f},\widehat{g}\big ]^{\psi}(t)}^2 \, dt \leq \lim_{R\rightarrow \infty} \frac{1}{R^m} \int_{B(0,R)}  \big [\widehat{f},\widehat{f}\big ]^{\abs{\psi}}(t) \, \big [\widehat{g},\widehat{g}\big ]^{\abs{\psi}}(t) dt \\
&\leq B  \, \cdot \, \M \Big \{ \big [\widehat{f},\widehat{f}\big ]^{\abs{\psi}} \Big \}\,.
\end{align*}
However, $\M \Big \{ \big [\widehat{f},\widehat{f}\big ]^{\abs{\psi}} \Big \}$ is the $0-$th Fourier coefficient of $ \big [\widehat{f},\widehat{f}\big ]^{\abs{\psi}}$, which can be computed to be $\text{vol}(\Ga)^{-1} \norm{\widehat{\psi}}_1 \norm{f}_2^2$. Putting all calculations together, the claim follows.
\end{proof}

The following result concerning the bracket product will be important in many calculations to follow.
\begin{proposition}\label{prop:ap bracket}
Let $\La(\Om)$ be a simple model set with $\Om$ symmetric around the origin and $\psi\in C_0^\infty(\Om)$ be non-negative. Assume that $g,h\in W(\Lt,\ell^1)$. Then, for $f\in\mathcal{D}$,
\begin{equation*}
F(x,t) = \big [\widehat{T_x f},\widehat{g}\big ]^\psi(t) \cdot \big [ \widehat{h},\widehat{T_x f}\big ]^\psi(t)\,, \quad (x,t)\in \R^m \times \R^m
\end{equation*}
is an almost periodic function.
\end{proposition}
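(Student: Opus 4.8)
The plan is to expand both bracket products via their definition \eqref{def:bracket}, multiply them out, and exploit a cancellation of the ``bad'' cross terms, after which $F$ turns into a series of pure characters in $x$ whose coefficients are almost periodic in $t$. Using $\widehat{T_x f}(s)=e^{-2\pi i x\cdot s}\widehat f(s)$ I would first write
\begin{align*}
\big[\widehat{T_xf},\widehat g\big]^\psi(t) &= \sum_{\ga^\ast\in\Ga^\ast}\widetilde{w}_\psi(-p^*_2(\ga^\ast))\,e^{-2\pi i x\cdot(t-p^*_1(\ga^\ast))}\,\phi(t-p^*_1(\ga^\ast)),\\
\big[\widehat h,\widehat{T_xf}\big]^\psi(t) &= \sum_{\ga^\ast\in\Ga^\ast}\widetilde{w}_\psi(-p^*_2(\ga^\ast))\,e^{2\pi i x\cdot(t-p^*_1(\ga^\ast))}\,\chi(t-p^*_1(\ga^\ast)),
\end{align*}
where $\phi:=\widehat f\,\overline{\widehat g}$ and $\chi:=\widehat h\,\overline{\widehat f}$ are continuous and compactly supported (because $\widehat f$ is), hence in $W(L^\infty,\ell^1)$. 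Both sums converge absolutely and uniformly in $(x,t)$: the compact support of $\phi,\chi$ and the rapid decay of $\widehat\psi$ bound $\sum_{\ga^\ast}|\widetilde{w}_\psi(-p^*_2(\ga^\ast))|\,|\phi(t-p^*_1(\ga^\ast))|$ uniformly, exactly as in the proof of the Poisson summation formula. Multiplying the two series, the factors $e^{-2\pi i x\cdot t}$ and $e^{2\pi i x\cdot t}$ cancel and only $e^{2\pi i x\cdot(p^*_1(\ga^\ast)-p^*_1(\ga'^\ast))}$ survives, giving
\begin{equation*}
F(x,t)=\sum_{\ga^\ast,\ga'^\ast}\widetilde{w}_\psi(-p^*_2(\ga^\ast))\widetilde{w}_\psi(-p^*_2(\ga'^\ast))\,e^{2\pi i x\cdot(p^*_1(\ga^\ast)-p^*_1(\ga'^\ast))}\,\phi(t-p^*_1(\ga^\ast))\,\chi(t-p^*_1(\ga'^\ast)).
\end{equation*}
This cancellation is the whole point: neither bracket is itself almost periodic in $(x,t)$, precisely because of the factor $e^{\pm2\pi i x\cdot t}$, which is not almost periodic; only the product is.

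Next I would reindex by $\ga'^\ast=\ga^\ast-\eta$, $\eta\in\Ga^\ast$, which converts the $x$-dependence into pure characters:
\begin{equation*}
F(x,t)=\sum_{\eta\in\Ga^\ast}e^{2\pi i x\cdot p^*_1(\eta)}\,D_\eta(t),\qquad D_\eta(t)=\sum_{\ga^\ast}\widetilde{w}_\psi(-p^*_2(\ga^\ast))\widetilde{w}_\psi(-p^*_2(\ga^\ast)+p^*_2(\eta))\,\phi(t-p^*_1(\ga^\ast))\,\chi(t-p^*_1(\ga^\ast)+p^*_1(\eta)).
\end{equation*}
I would then show each $D_\eta$ is almost periodic in $t$. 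Setting $\Omega_\eta(s,w):=\widehat\psi(-w)\widehat\psi(-w+p^*_2(\eta))\,\phi(s)\chi(s+p^*_1(\eta))$, which is continuous, compactly supported in $s$ and Schwartz in $w$, one has $D_\eta(t)=\text{vol}(\Ga)^{-2}\sum_{\ga^\ast}\Omega_\eta\big((t,0)-(p^*_1(\ga^\ast),-p^*_2(\ga^\ast))\big)$. Since the points $(p^*_1(\ga^\ast),-p^*_2(\ga^\ast))$ form a lattice in $\R^m\times\R$, the periodization converges uniformly (by the Schwartz decay in $w$, which controls the infinitely many lattice points with bounded $s$-component) to a continuous function on the torus $(\R^m\times\R)/\Ga^\ast$; its restriction to the subspace $w=0$ is a continuous torus function precomposed with a linear map, hence a uniform limit of trigonometric polynomials and therefore almost periodic in $t$. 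Note this argument is robust to the fact that $f\in\mathcal{D}$ need not lie in $W(\Lt,\ell^1)$, which is why I avoid invoking the trigonometric-series representation of $D_\eta$ directly.

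Finally, each term $e^{2\pi i x\cdot p^*_1(\eta)}D_\eta(t)$ is a product of a function almost periodic in $x$ (viewed on $\R^m\times\R^m$) and one almost periodic in $t$, hence almost periodic on $\R^m\times\R^m$. I would conclude by showing $\sum_\eta e^{2\pi i x\cdot p^*_1(\eta)}D_\eta(t)$ converges uniformly on $\R^m\times\R^m$; being a uniform limit of almost periodic functions, $F$ is then almost periodic by the limit theorem recalled in Section~\ref{sec:ap}. The uniform convergence reduces to $\norm{D_\eta}_\infty$ being summable in $\eta$, and this is the main obstacle. The compact supports of $\phi,\chi$ force $D_\eta\equiv0$ unless $p^*_1(\eta)\in\text{supp}\,\chi-\text{supp}\,\phi$, a bounded set; hence only $\eta$ with $p^*_1(\eta)$ bounded and $|p^*_2(\eta)|\to\infty$ contribute, and the rapid decay of $\widehat\psi$ in both weight factors, combined with the fact that $\Ga^\ast$ meets any slab $\{p^*_1\in\text{bounded}\}$ in a set of bounded density, is what makes $\sum_\eta\norm{D_\eta}_\infty<\infty$. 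Carrying out this quantitative estimate carefully is the only genuinely computational step and completes the proof.
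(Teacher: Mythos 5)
Your proposal is correct, and its first half is exactly the paper's: expanding both brackets with $\widehat{T_x f}(s)=e^{-2\pi i x\cdot s}\widehat{f}(s)$, cancelling the $e^{\pm 2\pi i x\cdot t}$ factors, and reindexing the double sum over $\Ga^\ast\times\Ga^\ast$ by $\eta$ is precisely the computation that produces \eqref{eq:F} (up to a sign convention on $\eta$). The second half is genuinely different. The paper recognizes your $D_\eta$ as the bracket product $\big[\widehat{F_h^\eta},\widehat{F_g^\eta}\big]^{\Psi_\eta}$, with $\widehat{F_g^\eta}=T_{p_1^*(\eta)}\widehat{f}\cdot\widehat{g}$, $\widehat{F_h^\eta}=\widehat{f}\cdot T_{p_1^*(\eta)}\widehat{h}$, $\widehat{\Psi_\eta}=\widehat{\psi}\cdot T_{p_2^*(\eta)}\widehat{\psi}$, expands each such bracket by the Poisson summation formula for model sets into a trigonometric series over $\Ga$, and then proves summability of the resulting coefficients over $\Ga^\ast\times\Ga$, using nonnegativity of $\psi$ (to dominate $\abs{\Psi_\eta(p_2(\ga))}$ by $\Psi_0(p_2(\ga))$), Cauchy--Schwarz, and equidistribution of model-set points to replace sums by integrals. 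You avoid Poisson summation altogether: each $D_\eta$ is almost periodic because it is the restriction to $\{w=0\}$ of a uniformly convergent lattice periodization, i.e.\ of a continuous function on $(\R^m\times\R)/\Ga^\ast$, and the series over $\eta$ converges uniformly by Schwartz decay of $\widehat{\psi}$ plus lattice counting. Your route never uses $\psi\ge 0$ and replaces the somewhat delicate equidistribution step by an elementary bounded-density estimate, so it is a bit more general and more robust; what it forgoes is the explicit double Fourier series \eqref{eq:fourier series F}, whose coefficients $\Psi_\eta(p_2(\ga))\inner{F_h^\eta}{T_{p_1(\ga)}F_g^\eta}$ the paper reuses later, in the proofs of Propositions~\ref{prop:function N} and~\ref{prop:bessel}.

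Two repairs are needed to make your argument complete. First, the continuity of $\phi=\widehat{f}\,\overline{\widehat{g}}$ and $\chi=\widehat{h}\,\overline{\widehat{f}}$ does not follow ``because $\widehat{f}$ is'': for $f\in\mathcal{D}$, the transform $\widehat{f}$ is only assumed to lie in $W(L^\infty,\ell^1)$ with compact support, which does not give continuity. The correct reason is that $\phi=\widehat{f\ast g^*}$ and $\chi=\widehat{h\ast f^*}$ are Fourier transforms of $L^1$ functions (since $W(\Lt,\ell^1)\ast W(\Lt,\ell^1)\subset W(L^\infty,\ell^1)\subset L^1$, as the paper notes in Section~\ref{sec:bracket}), hence have continuous representatives, which you must fix. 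This is not cosmetic in your proof: you restrict a torus function to the null set $\{w=0\}$, and such a restriction is not stable under almost-everywhere modification. Second, the deferred tail estimate should be written out, but it does close: with $p_1^*(\eta)$ confined to the compact set $\mathrm{supp}\,\widehat{f}-\mathrm{supp}\,\widehat{f}$, the corresponding points $p_2^*(\eta)$ form a uniformly discrete subset of $\R$ (a model set for the reverse cut-and-project direction), the Schwartz decay of $\widehat{\psi}$ together with the fact that $\Ga^\ast$ has boundedly many points in any slab $\{p_1^*\in \text{compact},\ p_2^*\in[k,k+1]\}$ gives $\norm{D_\eta}_\infty\le C_N\,(1+\abs{p_2^*(\eta)})^{-N}$, and this is summable over that uniformly discrete set.
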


\begin{proof}
Let $f\in\mathcal{D}$. For $\eta\in\Ga^*$, we define $F_g^\eta$ to be such that $\widehat{F_g^\eta} = T_{p_1^*(\eta)}\widehat{f} \cdot \widehat{g}$, and $F_h^\eta$ such that  $\widehat{F_h^\eta} = \widehat{f} \cdot T_{p_1^*(\eta)} \widehat{h}$. Moreover, for $\eta\in \Ga^\ast$, we define $\Psi_\eta$ such that $\widehat{\Psi_\eta} = \widehat{\psi} \cdot T_{p^*_2(\eta)} \widehat{\psi}$. Then each $\Psi_\eta$ belongs to $C_0^\infty(\R)$ and is compactly supported on $\Om+\Om$, and we can define $\widetilde{w}_{\Psi_\eta}$ as $\widetilde{w}_{\Psi_\eta} (p^*_2(\ga^*))=\text{vol}(\Ga)^{-1} \widehat{\Psi_\eta}(p^*_2(\ga^*))$, for all $\ga^*\in\Ga^*$, as in \eqref{eq:Fourier weight}. Then, by the change of index, we have
\begin{align}
&F(x,t) =\sum_{\mu,\theta\in \Ga^\ast} \widetilde{w}_\psi(-p^*_2(\mu)) \widetilde{w}_\psi(-p^*_2(\theta)) \big( \widehat{f} \,\overline{\widehat{g}}\big )(t-p^*_1(\mu)) \big(\overline{\widehat{f}}\widehat{h} \big )(t-p^*_1(\theta)) e^{2\pi i x \cdot (p^*_1(\mu) - p^*_1(\theta)) } \nonumber \\
&= \sum_{\eta,\mu\in \Ga^\ast} \widetilde{w}_\psi(-p^*_2(\mu)) \widetilde{w}_\psi(-p^*_2(\mu)-p^*_2(\eta)) \big( \widehat{f} \,\overline{\widehat{g}}\big )(t-p^*_1(\mu)) \big( \overline{\widehat{f}}\widehat{h} \big )(t-p^*_1(\mu)-p^*_1(\eta))\, e^{-2\pi i x \cdot p^*_1(\eta)} \nonumber \\
&=   \sum_{\eta\in \Ga^\ast}  \sum_{\mu \in \Ga^\ast}\text{vol}^{-2}(\Ga) \widehat{\Psi_\eta}(-p^*_2(\mu)) \widehat{F_h^\eta}(t-p^*_1(\mu)) \overline{\widehat{F_g^\eta}(t-p^*_1(\mu))} \, e^{-2\pi i x \cdot p^*_1(\eta)}\nonumber \\
&= \text{vol}(\Ga)^{-1} \sum_{\eta\in \Ga^\ast}  \sum_{\mu \in \Ga^\ast} \widetilde{w}_{\Psi_\eta}(-p^*_2(\mu)) \widehat{F_h^\eta}(t-p^*_1(\mu)) \overline{\widehat{F_g^\eta}(t-p^*_1(\mu))} \, e^{-2\pi i x \cdot p^*_1(\eta)}\nonumber\\
&= \text{vol}(\Ga)^{-1}  \sum_{\eta\in \Ga^\ast}  \big [ \widehat{F_h^\eta}, \widehat{F_g^\eta} \big ]^{\Psi_\eta} (t) \, e^{-2\pi i x \cdot p^*_1(\eta)}\,. \label{eq:F}
\end{align}
Since $f\in\mathcal{D}$ and $g,h\in W(\Lt,\ell^1)$, we have that $F_g^\eta,\widehat{F_g^\eta}\in W(L^\infty,\ell^1)$ and $F_h^\eta,\widehat{F_h^\eta}\in W(L^\infty,\ell^1)$, and the bracket products $\big [ \widehat{F_h^\eta}, \widehat{F_g^\eta} \big ]^{\Psi_\eta}$ are well defined almost periodic functions. They are represented by their Fourier series, 
\begin{align*}
\big [ \widehat{F_h^\eta}, \widehat{F_g^\eta} \big ]^{\Psi_\eta}(t) &= \sum_{\la\in\La(\Om+\Om)} w_{\Psi_\eta}(\la) \inner{F_h^\eta}{T_\la F_g^\eta} e^{-2\pi i\, t\cdot \la}\\
&= \sum_{\ga\in \Ga} \Psi_\eta(p_2(\ga)) \inner{F_h^\eta}{T_{p_1(\ga)} F_g^\eta} e^{-2\pi i\, t\cdot p_1(\ga)}\,,
\end{align*}
and we can write
\begin{equation*}\label{eq:fourier series F}
F(x,t) =  \text{vol}(\Ga)^{-1}  \sum_{\eta\in \Ga^\ast} \sum_{\ga\in \Ga} \Psi_\eta(p_2(\ga)) \inner{F_h^\eta}{T_{p_1(\ga)} F_g^\eta} e^{-2\pi i\, t\cdot p_1(\ga)} \,e^{-2\pi i x \cdot p^*_1(\eta)}\,.
\end{equation*}
Since,
\begin{align*}
\Psi_\eta(p_2(\ga)) &= \big(\conv{\psi}{M_{p_2^*(\eta)}}\big )(p_2(\ga)) = \innerbig{\psi}{T_{p_2(\ga)}M_{p_2^*(\eta)}\psi^*}\\ \innerbig{F_h^\eta}{T_{p_1(\ga)} F_g^\eta} &= \innerbig{\widehat{F_h^\eta}}{M_{-p_1(\ga)}\widehat{F_g^\eta}} = \innerbig{\widehat{f}\overline{\widehat{g}}}{M_{-p_1(\ga)}T_{p_1^*(\eta)} \widehat{f}\overline{\widehat{h}}}\\
&= \innerbig{\conv{f}{g^*}}{T_{p_1(\ga)}M_{p_1^*(\eta)}\big( \conv{f}{h^*}\big)}\,,
\end{align*}
where $\psi^*$ is an involution $\psi^(x)=\overline{\psi(-x)}$, we have
\begin{align*}
F(x,t) &=\text{vol}(\Ga)^{-1}  \sum_{\eta\in \Ga^\ast} \sum_{\ga\in \Ga} \inner{\psi}{T_{p_2(\ga)}M_{p_2^*(\eta)}\psi^*} \inner{\conv{f}{g^*}}{T_{p_1(\ga)}M_{p_1^*(\eta)} \big (\conv{f}{h^*}\big)} e^{-2\pi i\, t\cdot p_1(\ga)} \,e^{-2\pi i x \cdot p^*_1(\eta)}\\
&= \text{vol}(\Ga)^{-1}  \sum_{\eta\in \Ga^\ast} \sum_{\ga\in \Ga}   \innerBig{\big (\conv{f}{g^*} \big )\otimes \psi}{T_\ga M_{\eta} \big( \big (\conv{f}{h^*} \big )\otimes \psi^* \big)}  e^{-2\pi i\, t\cdot p_1(\ga)} \,e^{-2\pi i x \cdot p^*_1(\eta)}\,.
\end{align*}
For $f\in\mathcal{D}$ and $g,h\in W(\Lt,\ell^1)$, we have that $(\conv{f}{g^*}),(\conv{f}{h^*})\in W(L^\infty,\ell^1) \subset L^1(\R^m)$ and, since $\widehat{f}$ is compactly supported, both $\F(\conv{f}{g^*})=\widehat{f}\overline{\widehat{g}}$ and  $\F(\conv{f}{h^*})=\widehat{f}\overline{\widehat{h}}$ are also compactly supported. That means (see Thm.~$3.2.17$ in \cite{fezi98}), that $(\conv{f}{g^*}),(\conv{f}{h^*})\in M^1(\R^m)$. Also, $\psi\in M^1(\R)$, because $C_0^\infty(\Om) \subset \mathcal{S}(\R)\subset M^1(\R)$. Hence, both tensor products, $\big (\conv{f}{g^*} \big )\otimes \psi$ and $\big (\conv{f}{h^*} \big )\otimes \psi^*$ lie in $M^1(\R^m\times \R)$. By Proposition~\ref{prop:STFT},
\begin{equation*}
\V_{(\conv{f}{h^*})\otimes \psi^*}\left (\big (\conv{f}{g^*} \big )\otimes \psi \right) \in W(C_0,\lo)((\R^m\times \R)\times (\R^m\times \R))
\end{equation*}
By the norm estimate \eqref{eq:norm_estimate}, for $\La  =\Ga\times \Ga^*$, the coefficients in the series defining $F(x,t)$ are in $\ell^1(\Ga\times\Ga^*)$. That means that $F$ equals a generalized trigonometric polynomial, and therefore is almost periodic.
\end{proof}

We need one more result that will be an important tool in the characterization of tight frames and dual frames.

\begin{proposition}\label{prop:function N}
Let $\La(\Om)$ be a simple model set, $\Om$ symmetric around the origin and $\psi\in C_0^\infty(\Om)$ a non-negative function. Assume that $g_k,h_k\in W(\Lt,\ell^1)$ for every $k\in\mathcal{K}$, and $\sum_{k\in\mathcal{K}} \abs{\widehat{g_k}(t)}^2 \leq B_g$, $\sum_{k\in\mathcal{K}} \abs{\widehat{h_k}(t)}^2 \leq B_h$, with $B_g,B_h>0$.
Then, for every $f\in\mathcal{D}$, the function
\begin{equation}\label{eq:function G}
\mathcal{N}^\psi(x) = \sum_{k\in\mathcal{K}} \sum_{\la\in\La(\Om)} w_\psi(\la)^2 \, \inner{T_x f}{T_\la g_k} \inner{T_\la h_k}{T_x f}
\end{equation}
is almost periodic and coincides pointwise with its Fourier series $\sum_{\eta \in \Ga^\ast} \widehat{\mathcal{N}^\psi}(\eta) e^{-2\pi i p^*_1(\eta) \cdot x}$, with
\begin{equation*}
\widehat{\mathcal{N}^\psi}(\eta) = \text{vol}(\Ga)^{-1}\,\widehat{\psi^2} (-p_2^*(\eta)) \int_{\R^m} \widehat{f}(t) \overline{\widehat{f}(t-p^*_1(\eta))} \left (\sum_{k\in\mathcal{K}} \overline{\widehat{g_k}(t)}\, \widehat{h_k}(t-p^*_1(\eta))\right ) \, dt \,,
\end{equation*}
where $\eta\in \Ga^\ast$ and the integral converges absolutely.
\end{proposition}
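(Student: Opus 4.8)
The plan is to apply Proposition~\ref{prop:ap bracket} term by term in $k$ and then to sum over $k$ using the square-summability hypotheses. Fix $f\in\mathcal{D}$. For each fixed $x\in\R^m$ we have $T_xf\in\mathcal{D}$, so the argument proving Lemma~\ref{lem:lemma2} applies verbatim with $T_xf$ in place of $f$ and $g_k,h_k\in W(\Lt,\ell^1)$, yielding for every $k$
\[
\sum_{\la\in\La(\Om)}w_\psi(\la)^2\,\inner{T_xf}{T_\la g_k}\inner{T_\la h_k}{T_xf}=\M_t\big\{F_k(x,t)\big\},\quad F_k(x,t):=\big[\widehat{T_xf},\widehat{g_k}\big]^\psi(t)\,\big[\widehat{h_k},\widehat{T_xf}\big]^\psi(t).
\]
Hence $\mathcal{N}^\psi(x)=\sum_{k\in\mathcal{K}}\M_t\{F_k(x,t)\}$. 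By Proposition~\ref{prop:ap bracket} each $F_k$ is almost periodic on $\R^m\times\R^m$, and by Theorem~\ref{thm:ap in 2 variables} each summand $\mathcal{N}^\psi_k(x):=\M_t\{F_k(x,t)\}$ is almost periodic in $x$; it remains to control the sum over $k$ and to read off the Fourier data.

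First I would compute the Fourier coefficients of each $\mathcal{N}^\psi_k$. The proof of Proposition~\ref{prop:ap bracket} displays $F_k$ as the absolutely and uniformly convergent series~\eqref{eq:fourier series F} (with $g,h$ replaced by $g_k,h_k$). Applying $\M_t$ term by term---legitimate by that uniform convergence and the continuity of the mean under uniform limits---and using that $\M_t\{e^{-2\pi i t\cdot p_1(\ga)}\}$ equals $1$ for $\ga=0$ and $0$ otherwise (as $p_1|\Ga$ is injective), only the $\ga=0$ term survives:
\[
\mathcal{N}^\psi_k(x)=\text{vol}(\Ga)^{-1}\sum_{\eta\in\Ga^\ast}\Psi_\eta(0)\,\inner{F_{h_k}^\eta}{F_{g_k}^\eta}\,e^{-2\pi i x\cdot p_1^*(\eta)}.
\]
A direct Fourier computation from $\widehat{\Psi_\eta}=\widehat\psi\cdot T_{p_2^*(\eta)}\widehat\psi$ and $\widehat{\psi^2}=\widehat\psi\ast\widehat\psi$ gives $\Psi_\eta(0)=\widehat{\psi^2}(-p_2^*(\eta))$, while Plancherel together with $\widehat{F_{g_k}^\eta}=T_{p_1^*(\eta)}\widehat f\cdot\widehat{g_k}$ and $\widehat{F_{h_k}^\eta}=\widehat f\cdot T_{p_1^*(\eta)}\widehat{h_k}$ gives $\inner{F_{h_k}^\eta}{F_{g_k}^\eta}=\int_{\R^m}\widehat f(t)\overline{\widehat f(t-p_1^*(\eta))}\,\overline{\widehat{g_k}(t)}\,\widehat{h_k}(t-p_1^*(\eta))\,dt$.

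The crux is to prove that the double series $\sum_{k}\sum_{\eta}\absbig{\widehat{\mathcal{N}^\psi_k}(\eta)}$ converges, where $\widehat{\mathcal{N}^\psi_k}(\eta)=\text{vol}(\Ga)^{-1}\Psi_\eta(0)\inner{F_{h_k}^\eta}{F_{g_k}^\eta}$. For this I would rerun the final estimate in the proof of Proposition~\ref{prop:ap bracket}, now recording the dependence on $k$: it bounds $\sum_{\eta}\sum_{\ga}\absbig{\Psi_\eta(p_2(\ga))}\,\absbig{\inner{F_{h_k}^\eta}{T_{p_1(\ga)}F_{g_k}^\eta}}$ by $\int_{\Om_0}\Psi_0\cdot\big[\int_W H_k+\int_W G_k\big]$ with $H_k(x)=\int_U\absbig{\widehat f(t)\widehat{h_k}(t-x)}^2dt$ and $G_k(x)=\int_U\absbig{\widehat f(t)\widehat{g_k}(t+x)}^2dt$, the compact support of $\widehat f$ and the rapid decay of $\widehat\psi$ being what make these finite. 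Summing over $k$ and using $\sum_k\abs{\widehat{g_k}}^2\le B_g$, $\sum_k\abs{\widehat{h_k}}^2\le B_h$ replaces $\sum_k\int_W H_k$ and $\sum_k\int_W G_k$ by $B_h\abs{W}\norm{\widehat f}_{L^2(U)}^2$ and $B_g\abs{W}\norm{\widehat f}_{L^2(U)}^2$, so $\sum_k\sum_\eta\absbig{\widehat{\mathcal{N}^\psi_k}(\eta)}<\infty$.

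With this in hand the conclusion is routine: $\sum_k\absbig{\mathcal{N}^\psi_k(x)}\le\sum_k\sum_\eta\absbig{\widehat{\mathcal{N}^\psi_k}(\eta)}$ bounds the tails uniformly in $x$, so $\sum_k\mathcal{N}^\psi_k$ converges uniformly and $\mathcal{N}^\psi$, a uniform limit of almost periodic functions, is almost periodic. Because Fourier coefficients are continuous under uniform limits, $\widehat{\mathcal{N}^\psi}(\eta)=\sum_k\widehat{\mathcal{N}^\psi_k}(\eta)$; interchanging the absolutely convergent $k$-sum with the integral (here $\sum_k\overline{\widehat{g_k}}\,\widehat{h_k}$ converges absolutely by Cauchy--Schwarz and the bounds $B_g,B_h$) produces exactly the stated formula for $\widehat{\mathcal{N}^\psi}(\eta)$. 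Finally the same estimate gives $\sum_\eta\absbig{\widehat{\mathcal{N}^\psi}(\eta)}<\infty$, so by the absolute-convergence criterion recalled at the end of Section~\ref{sec:local functions} the Fourier series converges pointwise everywhere to $\mathcal{N}^\psi$. I expect the genuinely delicate point to be the $k$-uniform control of the coefficient series in the third paragraph---combining the compact support of $\widehat f$, the decay of $\widehat\psi$, and the square-summability of $\{\widehat{g_k}\}$ and $\{\widehat{h_k}\}$---together with the bookkeeping needed to interchange $\sum_k$, $\M_t$, and $\sum_\eta$.
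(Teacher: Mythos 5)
Your proof is correct and follows essentially the same route as the paper's: the same decomposition into the functions $\mathcal{N}^\psi_k$, the same use of Lemma~\ref{lem:lemma2}, Proposition~\ref{prop:ap bracket} (via \eqref{eq:F} and \eqref{eq:fourier series F}) and Lemma~\ref{lem:lemma1} to identify the Fourier coefficients, and the same Cauchy--Schwarz bound to control the sum over $k$. Where the paper disposes of the $k$-summation with a brief appeal to dominated convergence, your joint summability estimate $\sum_{k}\sum_{\eta}\absbig{\widehat{\mathcal{N}^\psi_k}(\eta)}<\infty$ yields uniform convergence in $x$ and is, if anything, a cleaner justification of the almost periodicity of the limit and of the pointwise validity of its Fourier series.
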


\begin{proof}
Let $f\in\mathcal{D}$.  For $k\in\mathcal{K}$ fixed, let
\begin{equation*}
\mathcal{N}^\psi_k (x) = \sum_{\la\in\La(\Om)} w_\psi(\la)^2\, \inner{T_x f}{T_\la g_k} \inner{T_\la h_k}{T_x f}\,.
\end{equation*}
By Lemma~\ref{lem:lemma2}, with $g=g_k$ and $h=h_k$, we have
\begin{equation*}
\mathcal{N}^\psi_k(x) = \M_t\Big\{ \big [ \widehat{T_x f},\widehat{g_k}\big ]^\psi(t) \cdot \big [ \widehat{h_k},\widehat{T_x f}\big ]^\psi(t) \Big\} \,,
\end{equation*}
and by Proposition~\ref{prop:ap bracket} and Theorem~\ref{thm:ap in 2 variables}, $\mathcal{N}^\psi_k(x)$ is almost periodic. For $\eta\in\Ga^*$, we define $F_{g_k}^\eta$ to be such that $\widehat{F_{g_k}^\eta} = T_{p_1^*(\eta)}\widehat{f}\cdot \widehat{g_k}$, and $F_{h_k}^\eta$ such that  $\widehat{F_{h_k}^\eta} = \widehat{f}\cdot T_{p_1^*(\eta)} \widehat{h_k}$. Moreover, for $\eta\in \Ga^\ast$, we define $\Psi_\eta$ such that $\widehat{\Psi_\eta} = \widehat{\psi} \cdot T_{p^*_2(\eta)} \widehat{\psi}$. Then each $\Psi_\eta$ belongs to $C_0^\infty(\R)$ and is compactly supported on $\Om+\Om$, and we can define $\widetilde{w}_{\Psi_\eta}$ as $\widetilde{w}_{\Psi_\eta} (p^*_2(\ga^*))=\text{vol}(\Ga)^{-1} \widehat{\Psi_\eta}(p^*_2(\ga^*))$, for all $\ga^*\in\Ga^*$, as in \eqref{eq:Fourier weight}.

Then, by the same calculations as in \eqref{eq:F}, the function $\mathcal{N}^\psi_k(x)$ becomes
\begin{align}
\mathcal{N}^\psi_k(x) &= \text{vol}(\Ga)^{-1} \sum_{\eta\in \Ga^*} \M_t \Big \{ \big [\widehat{F_{h_k}^\eta}, \widehat{F_{g_k}^\eta}\big ]^{\Psi_\eta}(t) \Big\} e^{-2\pi i x \cdot p_1^*( \eta) } \nonumber \\
&=  \text{vol}(\Ga)^{-1} \sum_{\eta\in \Ga^*} \Psi_\eta(0) \left (\int_{\R^m} \widehat{F_{h_k}^\eta}(t) \overline{\widehat{F_{g_k}^\eta}(t)} \,dt \right ) e^{-2\pi i x \cdot p_1^*(\eta)} \quad \mbox{by Lemma~\ref{lem:lemma1}} \nonumber \\
&= \text{vol}(\Ga)^{-1} \sum_{\eta\in \Ga^*} \widehat{\psi^2} (-p_2^*(\eta)) \left ( \int_{\R^m}  \widehat{f}(t) \overline{\widehat{f}(t-p_1^*(\eta))} \overline{\widehat{g_k}(t)}\widehat{h_k}(t-p_1^*(\eta)) \, dt \right ) e^{-2\pi i x \cdot p_1^*(\eta) }\, \label{eq:fourier series N_k}
\end{align}
By Cauchy-Schwarz inequality, we have
\begin{equation*}
\absBig{\sum_{k\in\mathcal{K}} \overline{\widehat{g_k}(t)}\widehat{h_k}(t-p_1^*(\eta))} \leq  \left ( \sum_{k\in\mathcal{K}} \absbig{\widehat{g_k}(t)}^2\right )^{1/2} \left ( \sum_{k\in\mathcal{K}} \absbig{\widehat{h_k}(t-p_1^*(\eta))}^2\right )^{1/2} \leq B_g^{1/2} B_h^{1/2} \,,
\end{equation*}
and the function $\sum_{k\in\mathcal{K}} \overline{\widehat{g_k}(t)}\widehat{h_k}(t-p_1^*(\eta))$ is locally integrable.
Therefore, since $f\in\mathcal{D}$, using Lebesgue Dominated Convergence Theorem, we have
\begin{align}
&\mathcal{N}^\psi(x) = \lim_{M\rightarrow \infty} \sum_{k\in \mathcal{K}_M} \mathcal{N}^\psi_k(x) \nonumber \\
&= \sum_{\eta\in \Ga^*} \left ( \text{vol}(\Ga)^{-1} \widehat{\psi^2} (-p_2^*(\eta)) \int_{\R^m} \widehat{f}(t) \overline{\widehat{f}(t-p_1^*(\eta))} \left (\sum_{k\in\mathcal{K}} \overline{\widehat{g_k}(t)}\, \widehat{h_k}(t-p_1^*(\eta))\right )\, dt \right ) e^{-2\pi i x \cdot p_1^*(\eta) }\,, \label{eq:series}
\end{align}
where $\mathcal{K}_M$ are finite subsets of $\mathcal{K}$ such that $\mathcal{K}_M \subseteq \mathcal{K}_{M+1}$, and the above holds with absolute convergence of the integral. By the uniqueness of the Fourier series, \eqref{eq:series} is the Fourier series of $\mathcal{N}^\psi(x)$.
\end{proof}


\section{Main Result}\label{sec:main}

At the beginning of Section~\ref{sec:not}, in Proposition~\ref{prop:operator shifts}, we showed the relationship between frame operators $S_g^{\La}$ and $S_g^{\La-x}$ where $\La$ was any relatively separated subset of $\R^m$. The same holds in particular for any $\La\in X(\La(\Om))$ and $x\in\R^m$. We show below that for $\La_1,\La_2\in X(\La(\Om))$, if $\La_1$ and $\La_2$ are $(R,\epsilon)$-close then, for every $f\in \Lt(\R^m)$, $S_g^{\La_1}f$ and $S_g^{\La_2}f$ are close. In other words, $S_g^{\La}$ is continuous, with respect to $\La$, in the strong operator topology. We formulate the result for the mixed frame operators $S_{g,h}^{\La}$. The following two Propositions are analogous to the results of Kreisel for Gabor frames \cite{K16}.

\begin{proposition}\label{prop:continuity}
Let $\La(\Om)$ be a simple model set and $g_k, h_k \in W(\Lt,\ell^1)$ for every $k\in\mathcal{K}$. Assume that $\{T_\la g_k:k\in\mathcal{K},\la\in\La'\}$ and  $\{T_\la h_k:k\in\mathcal{K},\la\in\La'\}$ are  Bessel sequences for each $\La'\in X(\La(\Om))$. If $\La_n$ converges to $\La$ in $X(\La(\Om))$, then the mixed frame operators $S_{g,h}^{\La_n}$ converges to  $S_{g,h}^{\La}$ in the strong operator topology. 
\end{proposition}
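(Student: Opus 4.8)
The plan is to show that $\norm{(S_{g,h}^{\La_n} - S_{g,h}^{\La})f}_2 \to 0$ for each fixed $f\in\Lt(\R^m)$, exploiting the fact that $\La_n$ and $\La$ agree inside any large ball up to a small translation once $n$ is large. First I would reduce to a dense subclass: since the Bessel hypothesis gives a uniform bound on the mixed frame operators $S_{g,h}^{\La'}$ over all $\La'\in X(\La(\Om))$ (they share the same relative separation, hence the same Bessel bounds by Proposition~\ref{prop:operator shifts} and the structure of the hull), the family $\{S_{g,h}^{\La_n}\}$ is uniformly bounded in operator norm. By a standard $3\epsilon$-argument it therefore suffices to prove strong convergence on a dense subset of $\Lt(\R^m)$, and I would take $f\in\mathcal{D}$ (or $f\in W(C_0,\ell^1)$) so that $f$ is well concentrated and $T_\la g_k$, $T_\la h_k$ have good decay.

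Next I would split the defining sum for $S_{g,h}^{\La_n}f - S_{g,h}^{\La}f$ into the contribution from $\la$ lying inside a ball $B(0,R)$ and the tail from $\la$ outside. For the tail, I would use that $f$ is concentrated and that $g_k,h_k\in W(\Lt,\ell^1)$, so that the inner products $\inner{f}{T_\la g_k}$ decay and the summed tail
\begin{equation*}
\sum_{k\in\mathcal{K}} \sum_{\substack{\la\in\La_n\\ \la\notin B(0,R)}} \inner{f}{T_\la g_k}\, T_\la h_k
\end{equation*}
has norm bounded uniformly in $n$ by a quantity that tends to $0$ as $R\to\infty$; the same estimate applies to the tail over $\La$. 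This is where the amalgam-space membership and the uniform relative separation do the work, controlling the tails independently of which hull element we use.

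For the bulk term inside $B(0,R)$, I would invoke the local topology: for $n$ large, $\La_n\cap B(0,R) = (\La + v_n)\cap B(0,R)$ for some $v_n\in B(0,\epsilon_n)$ with $\epsilon_n\to 0$, so there is a bijection between the finitely many points of $\La_n$ and of $\La$ in the ball, with paired points within $\epsilon_n$. I would then estimate the finite sum difference termwise, writing each paired difference as
\begin{equation*}
\inner{f}{T_{\la_n} g_k} T_{\la_n} h_k - \inner{f}{T_{\la} g_k} T_{\la} h_k
\end{equation*}
and using continuity of $\la\mapsto T_\la g_k$ and $\la\mapsto T_\la h_k$ in $\Lt$ (these translates depend continuously on $\la$ since $g_k,h_k\in\Lt$), together with the boundedness of the inner products, so that each term is $O(\epsilon_n)$ and, there being only $\text{rel}(\La(\Om))\cdot\#\{\text{cells in }B(0,R)\}$-many terms, the whole finite sum is small.

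The main obstacle is making the tail estimate genuinely uniform in $n$ while simultaneously keeping $R$ fixed for the bulk estimate: one must first choose $R$ large enough to kill both tails (using only the shared Bessel/amalgam bounds, not $n$), then choose $n$ large so that $\La_n$ and $\La$ are $(R,\epsilon_n)$-close with $\epsilon_n$ small enough to control the finitely many bulk terms. Coordinating these two limits — $R\to\infty$ before $\epsilon_n\to 0$ — and verifying that the tail bound really is independent of $n$ (which rests on every $\La'\in X(\La(\Om))$ having the same relative separation and hence uniformly controlled local point counts) is the delicate point; once that is in hand the $3\epsilon$-argument closes the proof.
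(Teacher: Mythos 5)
Your proposal is correct and follows essentially the same route as the paper's proof: fix $f$ in a dense class, split $S_{g,h}^{\La_n}f - S_{g,h}^{\La}f$ into the tail outside $B(0,R)$ (controlled uniformly over the hull by the shared Bessel bounds and common relative separation) and the bulk inside $B(0,R)$ (controlled by $(R,\epsilon)$-closeness once $n$ is large), choosing $R$ first and then $n$. The only difference is one of explicitness: you spell out the uniform-boundedness-plus-density ($3\epsilon$) reduction and the termwise continuity-of-translation estimate for the bulk, both of which the paper leaves implicit.
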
 

\begin{proof}
Let $f\in \mathcal{D}$ and $\epsilon >0$. Since $\text{rel}(\La') = \text{rel}(\La(\Om))$ for all $\la'\in X(\La(\Om))$ and $\{T_\la g_k:k\in\mathcal{K},\la\in\La'\}$ and  $\{T_\la h_k:k\in\mathcal{K},\la\in\La'\}$ are  Bessel sequences for all $\La'\in X(\La(\Om))$, we can choose $R>0$ large enough so that the sum $S_{g,h}^{\La'}f$ is arbitrarily small outside of the cube $B(0,R)$ independent of $\La' \in X(\La(\Om))$. That is
\begin{equation}\label{eq:bound on the mixed FO}
\left\| \sum_{k\in\mathcal{K}} \sum_{\la\in \La' \setminus B(0,R)} \inner{f}{T_\la g_k}T_\la h_k \right \|_2 < \frac{\epsilon}{4}
\end{equation}
On the other hand, since $\La_n$ converges to  $\La$, we can choose $N$ so that for all $n\geq N$, $\La_n$ agrees with $\La$ on $B(0,R)$ up to a small translation, so that
\begin{equation*}
\left \| \sum_{k\in\mathcal{K}} \sum_{\la\in \La \cap B(0,R)} \inner{f}{T_\la g_k}T_\la h_k - \sum_{k\in\mathcal{K}} \sum_{\la\in \La_n \cap B(0,R)} \inner{f}{T_\la g_k}T_\la h_k \right \|_2 < \frac{\epsilon}{2}\,.
\end{equation*}
Then for all $n\geq N$ we have
\begin{align*}
\normBig{S_{g,h}^{\La} f- S_{g,h}^{\La_n}f}_2 &\leq \left \|  \sum_{k\in\mathcal{K}} \sum_{\la\in \La \setminus B(0,R)} \inner{f}{T_\la g_k}T_\la h_k - \sum_{k\in\mathcal{K}} \sum_{\la\in \La_n \setminus B(0,R)} \inner{f}{T_\la g_k}T_\la h_k \right \|_2 + \frac{\epsilon}{2}\\
&\leq \left \| \sum_{k\in\mathcal{K}} \sum_{\la\in \La \setminus B(0,R)} \inner{f}{T_\la g_k}T_\la h_k \right \|_2 + \left \|\sum_{k\in\mathcal{K}} \sum_{\la\in \La_n \setminus B(0,R)} \inner{f}{T_\la g_k}T_\la h_k \right \|_2 + \frac{\epsilon}{2}\\
&< \frac{\epsilon}{4} + \frac{\epsilon}{4} + \frac{\epsilon}{2} = \epsilon
\end{align*}
where in the last line we used \eqref{eq:bound on the mixed FO}.
\end{proof}

Using continuity of the frame operator, we can improve Proposition~\ref{prop:operator shifts} to hold for all the point sets in the hull $X(\La(\Om))$. 

\begin{proposition}\label{prop:all shifts}
Let $\La(\Om)$ be a simple model set and $g_k\in W(\Lt,\ell^1)$ for each $k\in\mathcal{K}$. Suppose that $\{T_\la g_k:k\in\mathcal{K},\la\in\La(\Om)\}$ is a frame (Bessel sequence) for $\Lt(\R^m)$ with upper and lower frame bounds $A_g$ and $B_g$, respectively.  Then, for any $\La \in X(\La(\Om))$, $\{T_\la g_k:k\in\mathcal{K},\la\in\La\}$ is a frame (Bessel sequence) for $\Lt(\R^m)$ with the upper and lower frame bounds $A_g$ and $B_g$, respectively.
\end{proposition}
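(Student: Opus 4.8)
The plan is to deduce Proposition~\ref{prop:all shifts} from the continuity result in Proposition~\ref{prop:continuity} together with the density of the orbit of $\La(\Om)$ in the hull $X(\La(\Om))$. First I would recall from Proposition~\ref{prop:operator shifts} that every translate $\La(\Om)-x$, for $x\in\R^m$, carries the same frame (resp.\ Bessel) bounds $A_g$ and $B_g$ as $\La(\Om)$ itself, since $S_g^{\La(\Om)-x}=T_{-x}S_g^{\La(\Om)}T_x$ is unitarily equivalent to $S_g^{\La(\Om)}$. Thus the desired bounds already hold on the dense orbit $\{x+\La(\Om):x\in\R^m\}$, and the content of the proposition is to propagate them to the closure $X(\La(\Om))$ under the local topology.

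Next I would fix an arbitrary $\La\in X(\La(\Om))$ and choose a sequence $\La_n\to\La$ with each $\La_n$ in the orbit, which exists by the very definition of $X(\La(\Om))$ as the closure of the orbit. Applying Proposition~\ref{prop:continuity} with $h_k=g_k$ (so that the mixed frame operator becomes the ordinary frame operator $S_g^{\La_n}=S_{g,g}^{\La_n}$), I get $S_g^{\La_n}f\to S_g^{\La}f$ in norm for every $f$ in the dense set $\mathcal{D}$. The key analytic step is then to transfer the frame inequalities. For each $n$ and each $f\in\mathcal{D}$ we have
\begin{equation*}
A_g\norm{f}_2^2 \leq \inner{S_g^{\La_n}f}{f} \leq B_g\norm{f}_2^2,
\end{equation*}
since $\inner{S_g^{\La_n}f}{f}=\sum_{k}\sum_{\la\in\La_n}\abs{\inner{f}{T_\la g_k}}^2$ is exactly the frame sum for $\La_n$. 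Letting $n\to\infty$ and using the strong convergence $S_g^{\La_n}f\to S_g^{\La}f$, the inner products $\inner{S_g^{\La_n}f}{f}$ converge to $\inner{S_g^{\La}f}{f}$, so the two inequalities pass to the limit and yield
\begin{equation*}
A_g\norm{f}_2^2 \leq \sum_{k\in\mathcal{K}}\sum_{\la\in\La}\abs{\inner{f}{T_\la g_k}}^2 \leq B_g\norm{f}_2^2 \quad\text{for all }f\in\mathcal{D}.
\end{equation*}
Finally, since $\mathcal{D}$ is dense in $\Lt(\R^m)$ and the frame expression is continuous in $f$ (the Bessel bound $B_g$, valid for all $\La'\in X(\La(\Om))$ by hypothesis, guarantees the upper estimate extends and controls the limit), a standard density argument upgrades the inequalities from $\mathcal{D}$ to all of $\Lt(\R^m)$, giving the frame property of $\{T_\la g_k:k\in\mathcal{K},\la\in\La\}$ with the same bounds. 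The Bessel-only case is identical, using just the right-hand inequality.

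The main obstacle I anticipate is the verification that Proposition~\ref{prop:continuity} is genuinely applicable, namely that $\{T_\la g_k:k\in\mathcal{K},\la\in\La'\}$ is a Bessel sequence for \emph{every} $\La'\in X(\La(\Om))$, not merely for $\La(\Om)$; this uniform-Bessel hypothesis is what is needed both to invoke the strong-operator continuity and to justify the limit passage and the final density step. One should note that uniform relative separation ($\text{rel}(\La')=\text{rel}(\La(\Om))$ for all $\La'$) together with $g_k,h_k\in W(\Lt,\ell^1)$ supplies a $\La'$-independent Bessel bound, so this requirement is in fact met automatically; making that observation explicit is the only delicate point, and once it is in place the remaining steps are routine limiting arguments.
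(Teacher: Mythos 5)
Your argument follows the paper's own route: density of the orbit $\{\La(\Om)-x:x\in\R^m\}$ in $X(\La(\Om))$, equality of the frame bounds along the orbit (Proposition~\ref{prop:operator shifts}), strong convergence of the frame operators (Proposition~\ref{prop:continuity}), and passage to the limit in the quadratic form $\inner{S_g^{\La_n}f}{f}$; the limiting and density steps you spell out are fine and only make explicit what the paper leaves implicit. The problem lies exactly at the point you yourself flag as delicate: Proposition~\ref{prop:continuity} assumes that $\{T_\la g_k:k\in\mathcal{K},\la\in\La'\}$ is a Bessel sequence for \emph{every} $\La'\in X(\La(\Om))$ --- in particular for the limit set $\La$ whose frame property you are trying to establish --- and your proposed verification of this hypothesis is incorrect. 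You claim that $\text{rel}(\La')=\text{rel}(\La(\Om))$ together with $g_k\in W(\Lt,\ell^1)$ ``automatically'' supplies a $\La'$-independent Bessel bound. For a single $k$ this is true: one gets a bound of the form $C\,\text{rel}(\La')\,\norm{g_k}_{W(\Lt,\ell^1)}^2$. But the full family requires summing over $k\in\mathcal{K}$, which needs $\sum_{k\in\mathcal{K}}\norm{g_k}_{W(\Lt,\ell^1)}^2<\infty$, and this is nowhere assumed: take $\mathcal{K}=\N$ and all $g_k$ equal to one fixed nonzero $g\in W(\Lt,\ell^1)$; your two hypotheses hold, yet no Bessel bound exists for any $\La'$. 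The uniform Bessel property over the hull cannot come from the window class and separation alone; it must be inherited from the assumption on $\La(\Om)$.

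The gap is fixable, and the fix is what actually completes both your proof and the paper's (which skips this verification silently). Transfer the upper bound to a limit point $\La$ directly, before invoking Proposition~\ref{prop:continuity}: for any finite $\mathcal{K}_0\subset\mathcal{K}$, any finite $F\subset\La$, and $f\in\Lt(\R^m)$, choose $n$ large enough that each $\la\in F$ is matched by some $\la^n\in\La_n=\La(\Om)-x_n$ with $\la^n\rightarrow\la$ as $n\rightarrow\infty$ (possible by the definition of the local topology, distinct points being matched to distinct points since $\La$ is uniformly discrete); then norm continuity of $x\mapsto T_x g_k$ in $\Lt(\R^m)$ and the Bessel bound for each $\La_n$ (Proposition~\ref{prop:operator shifts}) give
\begin{equation*}
\sum_{k\in\mathcal{K}_0}\sum_{\la\in F}\abs{\inner{f}{T_\la g_k}}^2=\lim_{n\rightarrow\infty}\sum_{k\in\mathcal{K}_0}\sum_{\la\in F}\abs{\inner{f}{T_{\la^n} g_k}}^2\leq B_g\norm{f}_2^2\,,
\end{equation*}
and taking the supremum over all finite $F$ and $\mathcal{K}_0$ shows that $\{T_\la g_k:k\in\mathcal{K},\la\in\La\}$ is Bessel with the same bound $B_g$, for every $\La\in X(\La(\Om))$. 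With this established, the hypothesis of Proposition~\ref{prop:continuity} is met, and the rest of your argument --- strong convergence, passage to the limit in the frame inequalities on $\mathcal{D}$, and the density step --- goes through unchanged.
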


\begin{proof}
Let $\La\in X(\La(\Om))$. Since the set $\{\La(\Om)-x:x\in\R^m\}$ is dense in $X(\La(\Om))$, we can find a sequence of shifts $\La_n = \La(\Om)-x_n$ converging to $\La$ in the local topology. Then, by Proposition~\ref{prop:continuity} we have $S_g^{\La_n}$ converging to $S_g^{\La}$ in the strong operator topology. Due to Proposition~\ref{prop:operator shifts}, the frame bounds for the frames $\{T_\la g_k:k\in\mathcal{K},\la\in\La(\Om) - x_n\}$ are all equal, so $S_g^{\La}$ also satisfies the same frame bounds. In particular, $S_g^{\La}$ is bounded from below, and $\{T_\la g_k:k\in\mathcal{K},\la\in\La\}$ is a frame.
\end{proof}

The last Proposition tells us that if we know that $\{T_\la g_k:k\in\mathcal{K},\la\in\La(\Om)\}$ is a frame, or Bessel sequence, for $\Lt(\R^m)$, then we know it for the whole collection of point sets from $X(\La(\Om))$. Applying results from Section~\ref{sec:bracket} we will specify conditions on windows $g_k$ so that the family $\{T_\la g_k:k\in\mathcal{K},\la\in\La(\Om)\}$ is a tight frame, respectively, a dual frame. 

In order to do so, fix $f\in\mathcal{D}$ and let $g_k,h_k\in W(\Lt,\ell^1)$ be a collection of functions, $k\in\mathcal{K}$. We define a function $\widetilde{\mathcal{N}} : X(\La(\Om)) \rightarrow \C$ through the mixed frame operator, as
\begin{equation*}
\widetilde{\mathcal{N}}(\La) = \inner{S_{g,h}^{\La}f}{f}\,.
\end{equation*}
As long as $\{T_\la g_k:k\in\mathcal{K},\la\in\La \}$ and $\{T_\la h_k:k\in\mathcal{K},\la\in\La\}$ are Bessel sequences, $\widetilde{\mathcal{N}}$ is well defined.  Since $S_{g,h}^{\La}$ is continuous, in the strong operator topology, over $X(\La(\Om))$, the function $\widetilde{\mathcal{N}}$ is continuous. We can define from $\widetilde{\mathcal{N}}$ a function $\mathcal{N} :\R^m\rightarrow \C$, by
\begin{equation}\label{eq:main function}
\mathcal{N}(x) = \widetilde{\mathcal{N}}(\La(\Om)-x)\,,
\end{equation}
and since $\widetilde{\mathcal{N}}$ is continuous, $\mathcal{N}$ is local with respect to $\La(\Om)$. As was shown in Section~\ref{sec:local functions}, it has a Fourier expansion
\begin{equation*}
\mathcal{N}(x) = \sum_{\eta\in\Ga^*} \widehat{\mathfrak{N}}(\eta) e^{-2\pi i p_1^*(\eta) \cdot x}
\end{equation*}
where
\begin{equation}\label{eq:fourier coeff}
\widehat{\mathfrak{N}}(\eta)  = \lim_{R\rightarrow \infty} \frac{1}{R^m} \int_{B(0,R)} \mathcal{N}(x) e^{2\pi i x \cdot p_1^*(\eta)}\, dx\,.
\end{equation}
Applying the tools developed in Section~\ref{sec:bracket}, we will be able to compute the Fourier coefficients $\widehat{\mathfrak{N}}(\eta)$ of $\mathcal{N}$. We start with a simple result, that we deduce from Proposition~\ref{prop:ap bracket} and its proof, in analogy with \cite{J98},\cite{L02} for the case of regular shifts.

\begin{proposition}\label{prop:bessel}
Let $\La(\Om)$ be a simple model set with $\Om$ symmetric around the origin, $\psi\in C_0^\infty(\Om)$ a non-negative function and $g_k\in W(\Lt,\ell^1)$ for every $k\in\mathcal{K}$. If $\{T_\la g_k:k\in\mathcal{K},\la\in\La(\Om)\}$ is a Bessel sequence with upper frame bound $B_g$, then $\{w_\psi(\la)T_\la g_k:k\in\mathcal{K},\la\in\La(\Om)\}$ is also a Bessel sequence with upper frame bound $B_{g,\psi} = \norm{\psi}_\infty^2 B_g$. Moreover,
\begin{equation}\label{eq:bessel}
\text{vol}(\Ga)^{-1} \, \widehat{\psi^2}(0)\, \sum_{k\in\mathcal{K}} \absbig{\widehat{g_k}(t)}^2 \leq B_{g,\psi} \quad \mbox{for a.e. $t\in\R^m$}\,.
\end{equation}
\end{proposition}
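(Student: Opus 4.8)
The plan is to treat the two assertions separately, since the Bessel bound is elementary while the pointwise estimate \eqref{eq:bessel} carries all the content. For the Bessel bound I would simply observe that $w_\psi$ is real-valued with $\abs{w_\psi(\la)} = \abs{\psi(\la^\star)} \le \norm{\psi}_\infty$, so that $\inner{f}{w_\psi(\la)T_\la g_k} = w_\psi(\la)\inner{f}{T_\la g_k}$ and hence
\begin{equation*}
\sum_{k\in\mathcal{K}}\sum_{\la\in\La(\Om)} \absbig{\inner{f}{w_\psi(\la)T_\la g_k}}^2 = \sum_{k\in\mathcal{K}}\sum_{\la\in\La(\Om)} w_\psi(\la)^2\,\absbig{\inner{f}{T_\la g_k}}^2 \le \norm{\psi}_\infty^2\sum_{k\in\mathcal{K}}\sum_{\la\in\La(\Om)} \absbig{\inner{f}{T_\la g_k}}^2 \le \norm{\psi}_\infty^2 B_g\norm{f}_2^2\,,
\end{equation*}
which yields $B_{g,\psi} = \norm{\psi}_\infty^2 B_g$ at once from the Bessel hypothesis.

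For \eqref{eq:bessel} I would exploit the machinery behind Proposition~\ref{prop:function N}, but taking care to sidestep a circularity: that proposition \emph{assumes} a uniform bound on $\sum_k\abs{\widehat{g_k}}^2$, which is exactly what is to be proved. The remedy is to work with finite index sets. Fix $f\in\mathcal{D}$ and a finite $\mathcal{K}_M\subseteq\mathcal{K}$; for a single $k$ the quantity $\abs{\widehat{g_k}}^2$ is bounded, so the hypotheses underlying the coefficient computation \eqref{eq:fourier series N_k} hold trivially, and $\mathcal{N}_k^\psi(x)=\sum_{\la\in\La(\Om)} w_\psi(\la)^2\abs{\inner{T_x f}{T_\la g_k}}^2$ is almost periodic whose mean value, obtained by setting $\eta=0$ (so $p_1^*(\eta)=0$, $p_2^*(\eta)=0$) and $h_k=g_k$ in \eqref{eq:fourier series N_k}, equals
\begin{equation*}
\M\{\mathcal{N}_k^\psi\} = \text{vol}(\Ga)^{-1}\widehat{\psi^2}(0)\int_{\R^m}\absbig{\widehat{f}(t)}^2\,\absbig{\widehat{g_k}(t)}^2\,dt\,.
\end{equation*}

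Summing over $\mathcal{K}_M$ and noting that $\mathcal{N}_M^\psi(x):=\sum_{k\in\mathcal{K}_M}\mathcal{N}_k^\psi(x)$ is dominated pointwise by the full weighted Bessel sum applied to $T_x f$, which by the first part is bounded by $B_{g,\psi}\norm{T_x f}_2^2 = B_{g,\psi}\norm{f}_2^2$, I get $\M\{\mathcal{N}_M^\psi\}\le B_{g,\psi}\norm{f}_2^2$. Using Plancherel, $\norm{f}_2^2=\int\abs{\widehat{f}}^2$, this reads
\begin{equation*}
\text{vol}(\Ga)^{-1}\widehat{\psi^2}(0)\int_{\R^m}\absbig{\widehat{f}(t)}^2\sum_{k\in\mathcal{K}_M}\absbig{\widehat{g_k}(t)}^2\,dt \le B_{g,\psi}\int_{\R^m}\absbig{\widehat{f}(t)}^2\,dt\,.
\end{equation*}
Letting $M\to\infty$, monotone convergence (the summands being non-negative) upgrades this to the same inequality with the full sum $\sum_{k\in\mathcal{K}}\abs{\widehat{g_k}}^2$. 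Since this holds for every $f\in\mathcal{D}$, and $\widehat{f}$ may be taken to be the indicator of any bounded measurable set $E$ (which lies in $W(L^\infty,\ell^1)$ with compact support, so that $f\in\mathcal{D}$), a localization argument finishes the proof: were $\text{vol}(\Ga)^{-1}\widehat{\psi^2}(0)\sum_k\abs{\widehat{g_k}(t)}^2 > B_{g,\psi}$ on a set of positive measure, intersecting with a large ball and choosing $\widehat{f}=\mathds{1}_E$ there would violate the displayed integral inequality.

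The main obstacle is precisely the circularity noted above: one cannot invoke Proposition~\ref{prop:function N} wholesale, so the relevant $0$-th Fourier coefficient must be recovered for finite partial sums and the limit passed through by monotone convergence. A secondary technical point is the final localization step, where one must confirm that the test functions genuinely lie in $\mathcal{D}$; restricting to indicators of bounded sets, which belong to the Wiener algebra and have compact support, makes this immediate.
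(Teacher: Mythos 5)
Your proposal is correct and follows essentially the same route as the paper's own proof: the Bessel bound via $w_\psi(\la)^2\le\norm{\psi}_\infty^2$, and then, for \eqref{eq:bessel}, restriction to finite subsets $\mathcal{K}_M$ (precisely the paper's device for the circularity you flag), identification of the mean value in $x$ with the $0$-th Fourier coefficient coming from the bracket-product machinery, and the bound by $B_{g,\psi}\norm{f}_2^2$. The only divergence is the final localization step, where the paper convolves the finite sums $G_{0,M}$ with the approximate identity $s_n^2$ and evaluates at Lebesgue points before letting $M\to\infty$, whereas you first pass to the full sum by monotone convergence and then test against indicators $\widehat{f}=\mathds{1}_E$ of bounded sets; both endings are valid, yours being slightly more elementary.
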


\begin{remark}
In particular, choosing $\psi\in C_0^\infty(\Om)$ such that $\norm{\psi}_2^2=\abs{\Om}$ and $\norm{\psi}_\infty\leq 1$, it follows from Proposition~\ref{prop:bessel}, that the Bessel property of $\{T_\la g_k:k\in\mathcal{K},\la\in\La(\Om)\}$ implies $\text{vol}(\Ga)^{-1} \abs{\Om} \sum_{k\in\mathcal{K}} \absbig{\widehat{g_k}(t)}^2 \leq B_g$. However, for simple model sets, the density $D(\La(\Om))$ equals $\text{vol}(\Ga)^{-1} \abs{\Om}$, and the inequality becomes
\begin{equation*}
D(\La(\Om))  \sum_{k\in\mathcal{K}} \absbig{\widehat{g_k}(t)}^2 \leq B_g\,.
\end{equation*}
\end{remark}

\begin{proof}
Let $f\in\Lt(\R^m)$. Then 
\begin{equation*}
\sum_{k\in\mathcal{K}} \sum_{\la\in\La(\Om)} w_\psi(\la)^2 \, \absbig{\inner{f}{T_\la g_k}}^2 \leq \sup_{\la\in\La(\Om)}  w_\psi(\la)^2 \,\sum_{k\in\mathcal{K}} \sum_{\la\in\La(\Om)} \absbig{\inner{f}{T_\la g_k}}^2 \leq \norm{\psi}_\infty^2 B_g \norm{f}_2^2\,,
\end{equation*}
and the last inequality follows from $\{T_\la g_k:k\in\mathcal{K},\la\in\La(\Om)$ being a Bessel sequence for $\Lt(\R^m)$.

For the last part of the Proposition, it is sufficient to prove the statement for $f\in\mathcal{D}$. Since $\{w_\psi(\la) T_\la g_k:k\in\mathcal{K}, \la\in\La(\Om)\}$ is a Bessel sequence, then for every finite subset $\mathcal{K}_M$ of $\mathcal{K}$, and every $x\in\R^m$, we have, 
\begin{equation}\label{eq:int}
\sum_{k\in\mathcal{K}_M} \sum_{\la\in\La(\Om)} w_\psi(\la)^2\, \absbig{\inner{T_x f}{T_\la g_k}}^2  \leq   B_{g,\psi} \norm{T_x f}_2^2 = B_{g,\psi} \norm{f}_2^2\,,
\end{equation}
where $\mathcal{K}_M \subseteq \mathcal{K}_{M+1}$. By Lemma~\ref{lem:lemma2}, for $g=g_k$ and $h=g_k$, this becomes
\begin{equation}\label{eq:eq bessel}
\sum_{k\in\mathcal{K}_M} \M_t\Big \{  \absBig{\big [ \widehat{T_x f},\widehat{g_k}\big ]^\psi(t)}^2 \Big \} = \sum_{k\in\mathcal{K}_M} \M_t\Big \{  \big [ \widehat{T_x f},\widehat{g_k}\big ]^\psi(t) \cdot \big [ \widehat{g_k},\widehat{T_x f}\big ]^\psi(t)\Big \} \leq   B_{g,\psi} \norm{f}_2^2\,.
\end{equation}
Since, by Proposition~\ref{prop:ap bracket} for each $k\in\mathcal{K}$, $F(x,t) = \big [ \widehat{T_x f},\widehat{g_k}\big ]^\psi(t) \cdot \big [ \widehat{g_k},\widehat{T_x f}\big ]^\psi(t)$ is almost periodic in $(x,t)\in\R^m\times \R^m$, we have that, by Theorem~\ref{thm:ap in 2 variables}, $\M_t\Big \{ \absBig{\big [ \widehat{T_x f},\widehat{g_k}\big ]^\psi(t)}^2 \Big \}$ is almost periodic in $x\in\R^m$. A finite sum of almost periodic functions is almost periodic, and we can take the mean value over $x\in\R^m$ on both sides of \eqref{eq:eq bessel}:
\begin{equation}\label{eq:bessel bound}
\sum_{k\in\mathcal{K}_M} \M_x \M_t \Big \{  \big [ \widehat{T_x f},\widehat{g_k}\big ]^\psi(t) \cdot \big [ \widehat{g_k},\widehat{T_x f}\big ]^\psi(t)\Big \} \leq B_{g,\psi} \norm{f}_2^2\,.
\end{equation}
The interchange of summation and integration is justified since the summation is over a finite set. From the theory of almost periodic functions, we know that  $\M_x \M_t \{F(x,t)\} =\M_t \M_x\{F(x,t)\} = \M_{x,t} \{F(x,t)\} $ for all almost periodic functions in $(x,t)$, and, by definition, the mean value over $(x,t)$ is a $0-$th Fourier coefficient of $F(x,t)$. 
For $\eta\in\Ga^*$, we define $F_g^\eta$ to be such that $\widehat{F_{g}^\eta} = T_{p_1^*(\eta)}\widehat{f}\cdot \widehat{g}$, and , by \eqref{eq:fourier series F}
\begin{align}\label{eq:BB}
\sum_{k\in\mathcal{K}_M} \M_x \M_t \Big \{  \big [ \widehat{T_x f},\widehat{g_k}\big ]^\psi(t) \cdot \big [ \widehat{g_k},\widehat{T_x f}\big ]^\psi(t)\Big \}&= \sum_{k\in\mathcal{K}_M} \text{vol}(\Ga)^{-1}\Psi_0(0) \inner{F_{g}^0}{F_{g}^0}\\
&= \text{vol}(\Ga)^{-1} \sum_{k\in\mathcal{K}_M} \widehat{\psi^2}(0) \int_{\R^m} \absbig{\widehat{f}(t)}^2 \absbig{\widehat{g_k}(t)}^2\, dt \nonumber \\
&=  \text{vol}(\Ga)^{-1} \widehat{\psi^2}(0) \int_{\R^m}\absbig{ \widehat{f}(t)}^2 \sum_{k\in\mathcal{K}_M} \absbig{\widehat{g_k}(t)}^2 \, dt\,. \nonumber
\end{align}
Now, let $G_{0,M}(t) = \text{vol}(\Ga)^{-1} \widehat{\psi^2}(0) \sum_{k\in\mathcal{K}_M} \absbig{\widehat{g_k}(t)}^2$. Then $G_{0,M}\in L^1(\R^m)$, since it is a finite sum of $\Lt(\R^m)$ functions, and we denote by $L_M$ a set of Lebesgue points of $G_{0,M}$.  

Let $s$ be a bounded compactly supported function given by $s(t) = \mathds{1}_{B(0,1)}(t)$, where $B(0,1) = [0,1]^m$ is a ball in $\R^m$ centered at zero. We form a sequence of functions $s_n(t) = \sqrt{n} \,s(n\cdot t) = \sqrt{n}\, \mathds{1}_{B(0,1/n)}(t)$. Then $\{ s_n^2\}_{n\in\N}$ is an approximate identity, and for each Lebesgue point $t_0\in L_M$ of $G_{0,M}$, we have
\begin{align}
\lim_{n\rightarrow \infty}\big (s_n^2 \ast G_{0,M}\big )(t_0) &= \lim_{n\rightarrow \infty}\int_{\R^m} s_n^2(t_0-t) G_{0,M}(t)\, dt \nonumber \\
&=  \lim_{n\rightarrow \infty} n^m  \int_{\abs{t_0-t}\leq (1/n)^m} G_{0,M}(t)\,dt \nonumber \\
&= G_{0,M}(t_0)\,. \label{eq:aprox identity0}
\end{align}
Therefore, by letting $\widehat{f_n}(t)=s_n(t_0-t)$, we have $f_n\in\mathcal{D}$ with $\norm{f_n}_2=1$, and by \eqref{eq:aprox identity0},
\begin{equation*}
\lim_{n\rightarrow \infty}  \int_{\R^m} \absbig{\widehat{f_n}(t)}^2 G_{0,M}(t)\,dt = G_{0,M}(t_0) \quad \mbox{for all $t_0\in L_M$.}
\end{equation*}
Therefore, by \eqref{eq:BB}, $G_{0,M}(t_0) \leq B_{g,\psi}$ for all $t_0\in L_M$. Since,
\begin{equation*}
\text{vol}(\Ga)^{-1} \, \widehat{\psi^2}(0)\, \sum_{k\in\mathcal{K}} \absbig{\widehat{g_k}(t)}^2 = \lim_{M\rightarrow \infty} G_{0,M}(t)
\end{equation*}
we obtained the desired result for all $t$ in the intersection of all $L_M$, which is a dense set in $\R^m$.
\end{proof}

Before we proceed further we introduce a sequence of auxiliary functions that will be crucial in proving our results. The following Lemma is a particular case of  Prop~$3.6$ in \cite{BM00}. 
\begin{lemma}\label{lemma:psi}
Let $0<\epsilon<1$, $\Om$ be a compact subset of $\R$ symmetric around the origin, $\widetilde{\Om} = (1-\epsilon)\Om$ and $f_s =  \frac{\mathds{1}_{\epsilon^{s}\widetilde{\Om}}}{\abs{\epsilon^{s}\widetilde{\Om}}}$ for $s\in \N$. Then,
\begin{itemize}
\item[(i)] the infinite convolution product $\Convtext_{s=0}^{\infty} \, f_s$ converges in $L^1(\R)$ and defines a non-negative smooth function $\psi = \Convtext_{s=0}^{\infty} \, f_s$ compactly supported on $\Om$, with $\norm{\psi}_1 = 1$;
\item[(ii)] the Fourier transform of $\psi$ is the smooth function $\widehat{\psi} = \prod_{s=0}^{\infty} \,\widehat{f_s} $, with uniform convergence of the product, where $\widehat{f_s}(t) = \text{sinc}(t\,\abs{\epsilon^{s} \widetilde{\Om}})$ and $\text{sinc}(t) = \frac{\sin(\pi t)}{\pi t}$.
\end{itemize}
\end{lemma}

Now, for $0<\epsilon<1$ and $\Om$ a compact subset of $\R$ symmetric around the origin, we define a sequence of compact sets $\Om_n = (1-\epsilon^n)\Om$. The sets $\Om_n$ are increasing and $\overline{\bigcup_n \Om_n} = \Om$. Let $\psi_n$ be an infinite convolution product
\begin{equation}\label{eq:function psi_n}
\psi_n= \Conv_{s=0}^{\infty} \, \frac{\mathds{1}_{\epsilon^{ns}\Om_n}}{\abs{\epsilon^{ns}\Om_n}} = 
\frac{\mathds{1}_{\Om_n}}{\abs{\Om_n}} \ast \left ( \Conv_{s=1}^{\infty} \, \frac{\mathds{1}_{\epsilon^{ns}\Om_n}}{\abs{\epsilon^{ns}\Om_n}}\right)\,.
\end{equation}
Then, by Lemma~\ref{lemma:psi}, each $\psi_n$ is well defined and forms a  sequence of $C_0^\infty(\Om)$ non-negative functions, with Fourier transform of $\psi_n$ being
\begin{equation}\label{eq:function Fourier psi_n}
\widehat{\psi_n}(t) = \prod_{s=0}^{\infty} \,\text{sinc}(t\,\abs{\epsilon^{ns} \Om_n}) = 
\text{sinc}(t\,\abs{\Om_n})  \cdot \prod_{s=1}^{\infty} \, \text{sinc}(t\,\abs{\epsilon^{ns} \Om_n}) \,,.
\end{equation}

\begin{lemma}\label{lemma:psi_n}
With the above notation, 
\begin{itemize}
\item[(i)] the sequence $\{\psi_n\}_{n=1}^{\infty}$ converges pointwise to $\frac{\mathds{1}_\Om}{\abs{\Om}}$ on $\Om\setminus \partial \Om$, where $\partial \Om$ is the boundary of $\Om$;
\item[(ii)] the sequence of Fourier transforms, $\{\widehat{\psi_n}\}_{n=1}^{\infty}$, converges uniformly.
\end{itemize}
\end{lemma}

\begin{proof}
Let $t_0\in \Om\setminus \partial \Om$ and $\delta>0$. By the properties of the sets $\Om_n$ we have: $\Om_m \subset \Om_n \subset \Om$ and $\epsilon^{ns} \Om_n \subset \epsilon^{ms} \Om_m$ for $n\geq m$ and $\epsilon^{ns} \Om_n \subset \epsilon^{n(s+1)} \Om_n$ for all $n$ and $s$. Then, there exists $N\geq 0$, such that for all $n\geq N$, $\abs{\Om \setminus \Om_n} < \delta \abs{\Om}^2$ and $t_0\in\Om_n$. That means, for $n\geq N$
\begin{equation*}
\left [ \left (\sum_{s=1}^{\infty} \text{supp }\frac{\mathds{1}_{\epsilon^{ns}\Om_n}}{\abs{\epsilon^{ns}\Om_n}} \right )- t_0\right ] \cap \Om_n = \left [\left (\sum_{s=1}^\infty \epsilon^{ns}\Om_n - t_0 \right )\right ] \cap \Om_n \neq  \emptyset\,.
\end{equation*}
Then, for all $n\geq N$,
\begin{align*}
\left |\psi_n(t_0) - \frac{\mathds{1}_{\Om}(t_0)}{\abs{\Om}} \right | &= \left | \left [\frac{\mathds{1}_{\Om_n}}{\abs{\Om_n}} \ast \left ( \Conv_{s=1}^{\infty} \, \frac{\mathds{1}_{\epsilon^{ns}\Om_n}}{\abs{\epsilon^{ns}\Om_n}}\right)\right ](t_0) - \frac{1}{\abs{\Om}} \right | \\
&= \left | \frac{1}{\abs{\Om_n}} \int_{\Om_n}  \left ( \Conv_{s=1}^{\infty} \, \frac{\mathds{1}_{\epsilon^{ns}\Om_n}}{\abs{\epsilon^{ns}\Om_n}}\right)(t_0-x)\, dx -  \frac{1}{\abs{\Om}} \right | \\
&\leq \left | \frac{1}{\abs{\Om_n}}\left \| \Conv_{s=1}^{\infty} \, \frac{\mathds{1}_{\epsilon^{ns}\Om_n}}{\abs{\epsilon^{ns}\Om_n}}\right \|_1 -  \frac{1}{\abs{\Om}} \right | \\
&= \left | \frac{1}{\abs{\Om_n}} -  \frac{1}{\abs{\Om}} \right | = \frac{\abs{\Om \setminus \Om_n}}{\abs{\Om_n}\abs{\Om}} \\
&< \frac{\delta}{(1-\epsilon^N)}\,,
\end{align*}
and claim $(i)$ follows.

For $(ii)$ it suffices to show that the sequence of Fourier transforms $\widehat{\psi_n}$ is uniformly Cauchy. By $(i)$ and Lemma~\ref{lemma:psi}, $\{\psi_n\}_{n=1}^{\infty}$ is a sequence of $L^1$ functions converging pointwise almost everywhere to $\frac{\mathds{1}_{\Om}}{\abs{\Om}} $.  Then by the $L^1$ Dominated Convergence Theorem, $\{\psi_n\}_{n=1}^{\infty}$ converges to $\frac{\mathds{1}_{\Om}}{\abs{\Om}} $ in $L^1$, and it follows that $\{\psi_n\}_{n=1}^{\infty}$  is an $L^1$ Cauchy sequence. Meaning, for $\delta>0$ there exists $N>0$ such that $\norm{\psi_n-\psi_m}_1 < \delta$ for all $n,m\geq N$.  Let $n,m\geq N$, then
\begin{equation*} 
\norm{\widehat{\psi_m} - \widehat{\psi_n}}_\infty \leq \norm{\psi_m - \psi_n}_1 < \delta\,,
\end{equation*}
and $\{\widehat{\psi_n}\}_{n=1}^{\infty}$ is uniformly Cauchy. By the completeness of $L^{\infty}(\R)$, it converges uniformly.
\end{proof}

Let, $\Psi$ denote the uniform limit of the sequence defined in \eqref{eq:function Fourier psi_n}. To make things more convenient later, we normalize $\Psi$, and define a new function
\begin{equation}\label{eq:function phi} 
\Phi = \abs{\Om} \cdot (\conv{\Psi}{\Psi})\,. 
\end{equation}
Note that $\Phi(0)=1$. 
 
The following observation will be the main ingredient in our approach. It is analogous to the results for regular frames of translates \cite{L02,HLW02}. With the above notation we have

\begin{proposition}\label{prop:limit function N}
Let $\La(\Om)$ be a simple generic model set with $\Om$ symmetric around the origin. Assume that $g_k,h_k\in W(\Lt,\ell^1)$ for every $k\in\mathcal{K}$, and $\sum_{k\in\mathcal{K}} \abs{\widehat{g_k}(t)}^2 \leq B_g$, $\sum_{k\in\mathcal{K}} \abs{\widehat{h_k}(t)}^2 \leq B_h$, with $B_g,B_h>0$. Then, for every $f\in\mathcal{D}$, the function
\begin{equation*}
\mathcal{N}(x) = \sum_{k\in\mathcal{K}} \sum_{\la\in\La(\Om)} \inner{T_x f}{T_\la g_k} \inner{T_\la h_k}{T_x f}
\end{equation*}
is continuous and coincides pointwise with its Fourier series $\sum_{\eta\in\Ga^*} \widehat{\mathcal{N}}(\eta) e^{-2\pi i p_1^*(\eta) \cdot x}$, with
\begin{equation*}
\widehat{\mathcal{N}}(\eta) = D(\La(\Om)) \, \Phi(-p_2^*(\eta)) \int_{\R^m} \widehat{f}(t) \overline{\widehat{f}(t-p^*_1(\eta))} \left (\sum_{k\in\mathcal{K}} \overline{\widehat{g_k}(t)}\,\widehat{h_k}(t-p^*_1(\eta))\right ) \, dt \,.
\end{equation*}
where $\eta\in \Ga^\ast$, $\Phi$ defined in \eqref{eq:function phi}, and the integral converges absolutely.
\end{proposition}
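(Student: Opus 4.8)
The plan is to realize $\mathcal{N}$ as a limit of the weighted functions produced by Proposition~\ref{prop:function N}, and to transport the Fourier coefficients through this limit. Write $I_\eta=\int_{\R^m}\widehat f(t)\overline{\widehat f(t-p_1^*(\eta))}\big(\sum_{k\in\mathcal{K}}\overline{\widehat{g_k}(t)}\,\widehat{h_k}(t-p_1^*(\eta))\big)\,dt$ for the integral in the statement. Applying Proposition~\ref{prop:function N} with $\psi=\psi_n$ from \eqref{eq:function psi_n} gives, for each $n$, an almost periodic function $\mathcal{N}^{\psi_n}$ whose Fourier coefficients are $\widehat{\mathcal{N}^{\psi_n}}(\eta)=\text{vol}(\Ga)^{-1}\widehat{\psi_n^2}(-p_2^*(\eta))\,I_\eta$ and which coincides with its Fourier series. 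The asserted constants will come out of the definition $\widehat{\Phi^2}=\abs{\Om}\widehat{\Psi^2}$ in \eqref{eq:function phi} together with the density identity $D(\La(\Om))=\text{vol}(\Ga)^{-1}\abs{\Om}$ for simple model sets.

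First I would check two preliminary facts. The defining double sum for $\mathcal{N}$ converges absolutely and locally uniformly in $x$ (by Cauchy--Schwarz in $(k,\la)$ together with the amalgam estimates used in the proof of Proposition~\ref{prop:function N} and the hypotheses $\sum_k\abs{\widehat{g_k}}^2\le B_g$, $\sum_k\abs{\widehat{h_k}}^2\le B_h$), so that $\mathcal{N}$ is continuous. Moreover $\abs{\Om}^2\mathcal{N}^{\psi_n}(x)\to\mathcal{N}(x)$ pointwise: since the model set is generic, every $\la^\star$ lies in the interior of $\Om$ and is therefore eventually contained in the plateau of $\psi_n$, so $\abs{\Om}^2\psi_n(\la^\star)^2\to1$; as $\abs{\Om}^2\psi_n(\la^\star)^2$ is bounded uniformly in $n$, dominated convergence in the $(k,\la)$-sum (with the absolutely convergent sum for $\mathcal{N}$ as majorant) gives the limit.

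The core step, and the point I expect to be the main obstacle, is to show that $\widehat{\mathcal{N}}(\eta)=\lim_n\abs{\Om}^2\widehat{\mathcal{N}^{\psi_n}}(\eta)$. One cannot pass to the limit through a uniform bound, because $\abs{\Om}^2\mathcal{N}^{\psi_n}\to\mathcal{N}$ is only locally uniform in $x$ and is not uniform on all of $\R^m$ (the translates $\la$ carrying the mass drift off to infinity as $x$ grows). Instead I would compute the coefficient as the mean value $\widehat{\mathcal{N}}(\eta)=\M_x\{\mathcal{N}(x)e^{2\pi i x\cdot p_1^*(\eta)}\}$ of \eqref{eq:fourier coeff} and transport it to the torus $\T=(\R^m\times\R)/\Ga$ by the torus parametrization and the ergodic theorem of Section~\ref{sec:local functions}. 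On the \emph{finite} measure space $(\T,\theta)$ the functions attached to $\abs{\Om}^2\mathcal{N}^{\psi_n}$ are uniformly bounded and converge $\theta$-almost everywhere to the one attached to $\mathcal{N}$ (again because $\abs{\Om}^2\psi_n(\la^\star)^2\to1$ off the measure-zero boundary), so dominated convergence on $\T$ yields $\widehat{\mathcal{N}}(\eta)=\lim_n\abs{\Om}^2\widehat{\mathcal{N}^{\psi_n}}(\eta)$. Using $\widehat{\psi_n^2}(-p_2^*(\eta))\to\widehat{\Psi^2}(-p_2^*(\eta))$ (which holds since $\psi_n^2\to\Psi^2$ in $L^1(\Om)$) and the identity $\abs{\Om}^2\widehat{\Psi^2}=\abs{\Om}\widehat{\Phi^2}$, this limit is exactly $D(\La(\Om))\widehat{\Phi^2}(-p_2^*(\eta))\,I_\eta$.

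It remains to establish pointwise coincidence of $\mathcal{N}$ with its Fourier series, for which I would verify the absolute summability criterion from the end of Section~\ref{sec:local functions}, namely $\sum_{\eta\in\Ga^*}\abs{\widehat{\mathcal{N}}(\eta)}<\infty$. The factor $I_\eta$ is nonzero only when $\text{supp}\,\widehat f$ meets $\text{supp}\,\widehat f+p_1^*(\eta)$, which confines $p_1^*(\eta)$ to a fixed compact set, and there $\abs{I_\eta}\le B_g^{1/2}B_h^{1/2}\norm{\widehat f}_2^2$ by Cauchy--Schwarz. Summing the remaining lattice points, whose second coordinates $p_2^*(\eta)$ escape to infinity, against the decay of $\widehat{\Phi^2}$ recorded after \eqref{eq:function phi} gives a finite total, whence the series converges absolutely and uniformly to the continuous function $\mathcal{N}$.
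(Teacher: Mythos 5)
Your setup (apply Proposition~\ref{prop:function N} with $\psi=\psi_n$ from \eqref{eq:function psi_n}) and your pointwise-convergence step (genericity plus dominated convergence in the $(k,\la)$-sum) coincide with the paper's proof. The divergence, and the gap, is in your ``core step''. First, the obstacle you cite is not real: the paper never needs $\abs{\Om}^2\mathcal{N}^{\psi_n}\to\mathcal{N}$ uniformly on $\R^m$. It shows instead that $\mathcal{N}^{\psi_n}$ converges uniformly to the \emph{candidate series} $\abs{\Om}^{-2}\sum_{\eta}\widehat{\mathcal{N}}(\eta)e^{-2\pi i p_1^*(\eta)\cdot x}$ --- this is controlled purely through the coefficients, via $\sup_t\absbig{\widehat{\psi_n^2}(t)-\widehat{\Psi^2}(t)}\to 0$ together with $\sum_{\eta}\abs{I_\eta}<\infty$ (your notation $I_\eta$ for the integral) --- and separately that $\abs{\Om}^2\mathcal{N}^{\psi_n}\to\mathcal{N}$ pointwise; uniqueness of limits then identifies $\mathcal{N}$ with the series, which retroactively yields almost periodicity and continuity. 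This two-sided squeeze is exactly what allows the paper to conclude without knowing anything about $\mathcal{N}$ a priori beyond being a pointwise limit.

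Your replacement argument, by contrast, assumes what is to be proved. To write $\widehat{\mathcal{N}}(\eta)=\M_x\{\mathcal{N}(x)e^{2\pi i x\cdot p_1^*(\eta)}\}$ you need these mean values to exist, i.e.\ you need $\mathcal{N}$ to be almost periodic or local in the sense of Section~\ref{sec:local functions}; at that stage of your proof $\mathcal{N}$ is only known to be continuous, and locally uniform limits of almost periodic functions need not be almost periodic. Formula \eqref{eq:fourier coeff} and the transport to $\T$ are available only for local functions, and the paper's route to locality of $x\mapsto\inner{S_{g,h}^{\La(\Om)-x}f}{f}$ goes through Proposition~\ref{prop:continuity}, which requires both systems to be Bessel sequences on the whole hull --- a hypothesis Proposition~\ref{prop:limit function N} deliberately does not make (the pointwise bounds $\sum_k\abs{\widehat{g_k}}^2\le B_g$, $\sum_k\abs{\widehat{h_k}}^2\le B_h$ are \emph{consequences} of Besselness by Proposition~\ref{prop:bessel}, not substitutes for it). Moreover, even granting the transport, your dominated-convergence step on $(\T,\theta)$ is unjustified: genericity gives convergence of $\abs{\Om}^2\mathcal{N}^{\psi_n}$ only at points of the orbit $\{(x,0)_L:x\in\R^m\}$, which is a $\theta$-null subset of the $(m+1)$-dimensional torus, so ``convergence $\theta$-almost everywhere'' does not follow; one would have to argue separately at points $(t,s)_L$ with $s\neq 0$, i.e.\ for the shifted windows $\Om-s$. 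Your final summability step (compact support of $\widehat{f}$ confining $p_1^*(\eta)$ to a compact set, rapid decay of $\widehat{\Phi^2}$ in $p_2^*(\eta)$) is sound and matches the paper's appeal to the argument of Proposition~\ref{prop:ap bracket}, but it cannot repair the unproven identification of the coefficients that precedes it.
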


\begin{remark}
Let $W$ be the essential support of $\Phi$. Then, for $p_2^*(\eta) \notin W$, the coefficients $\widehat{\mathcal{N}}(\eta)$ are essentially zero, and we can approximate $\mathcal{N}(x)$ by 
\begin{equation*}
\mathcal{N}(x) \approx \sum_{\be\in\La(W)} \widehat{\mathcal{N}}(\be) e^{-2\pi i \be \cdot x}\,,
\end{equation*}
where $\La(W) = \left \{\be=p_1^*(\eta)\,:\, \eta\in\Ga^*,\, p_2^*(\eta) \in W \right \}$ is a model set originating from $\Ga^*$ and 
\begin{equation*}
\widehat{\mathcal{N}}(\be) = D(\La(\Om)) \, \Phi(-p_2^*(\eta)) \int_{\R^m} \widehat{f}(t) \overline{\widehat{f}(t-\be)} \left (\sum_{k\in\mathcal{K}} \overline{\widehat{g_k}(t)}\, \widehat{h_k}(t-\be)\right ) \, dt \,.
\end{equation*}
\end{remark}

\begin{proof}[Proof of Proposition~\ref{prop:limit function N}]
Let $f\in\mathcal{D}$. Let $\psi_n$ be a  sequence of $C_0^\infty(\Om)$ non-negative functions defined in \eqref{eq:function psi_n}. By Proposition~\ref{prop:function N}, for each $n\in\N$, the functions
\begin{equation*}
\mathcal{N}^{\psi_n}(x) = \sum_{k\in\mathcal{K}} \sum_{\la\in\La(\Om)} w_{\psi_n}(\la)^2\, \inner{T_x f}{T_\la g_k} \inner{T_\la h_k}{T_x f}
\end{equation*}
are well defined almost periodic functions that are pointwise equal to their Fourier series $\sum_{\eta\in\Ga^*} \widehat{\mathcal{N}^{\psi_n}}(\eta) e^{-2\pi i p_1^*(\eta) \cdot x}$, where $\widehat{\mathcal{N}^{\psi_n}}(\eta)$ are given by
\begin{equation*}
\widehat{\mathcal{N}^\psi_n}(\eta) = \text{vol}(\Ga)^{-1}\,\widehat{\psi_n^2} (-p_2^*(\eta)) \int_{\R^m} \widehat{f}(t) \overline{\widehat{f}(t-p^*_1(\eta))} \left (\sum_{k\in\mathcal{K}} \overline{\widehat{g_k}(t)}\,\widehat{h_k}(t-p^*_1(\eta))\right ) \, dt \,.
\end{equation*}

By Cauchy-Schwarz inequality, we have
\begin{equation*}
\absBig{\sum_{k\in\mathcal{K}}  \overline{\widehat{g_k}(t)}\, \widehat{h_k}(t-p_1^*(\eta))} \leq  \left ( \sum_{k\in\mathcal{K}} \absbig{\widehat{g_k}(t)}^2\right )^{1/2} \left ( \sum_{k\in\mathcal{K}} \absbig{\widehat{h_k}(t-p_1^*(\eta))}^2\right )^{1/2} \leq B_g^{1/2} B_h^{1/2} \,,
\end{equation*}
and the function $\sum_{k\in\mathcal{K}}  \overline{\widehat{g_k}(t)} \, \widehat{h_k}(t-p_1^*(\eta))$ is locally integrable, and the coefficients $\widehat{\mathcal{N}}(\eta)$ are well defined. The series $\sum_{\eta\in\Ga^*} \widehat{\mathcal{N}}(\eta) e^{-2\pi i p_1^*(\eta) \cdot x}$ converges absolutely (by a similar argument as in the proof of Proposition~\ref{prop:ap bracket}) and gives rise to a uniformly continuous function. By the uniform convergence of $\widehat{\psi_n^2}$ to $\conv{\Psi}{\Psi}$, it can be easily verified that $\mathcal{N}^{\psi_n}$ converges uniformly to $\abs{\Om}^{-2}\,\sum_{\eta\in\Ga^*} \widehat{\mathcal{N}}(\eta) e^{-2\pi i p_1^*(\eta) \cdot x}$, since $D(\La(\Om)) = \text{vol}(\Ga)^{-1} \abs{\Om}$ and $\Phi= \abs{\Om} \cdot (\conv{\Psi}{\Psi})$.

Now, since $\La(\Om)$ is generic, that is the boundary $\partial \Om$ of $\Om$ has no common points with $p_2(\Ga)$,  and $\psi_n$ converges pointwise to $\frac{\mathds{1}_\Om}{\abs{\Om}}$ on $\Om\setminus \partial \Om$, by Lemma~\ref{lemma:psi}, we show that $\mathcal{N}(x)$ is a pointwise limit of $\abs{\Om}^2 \,\mathcal{N}^{\psi_n}(x)$. Indeed, by the Lebesgue Dominated Convergence Theorem, we can move the limit inside the sum, and for every $x\in\R^m$, we have
\begin{align*}
\lim_{n\rightarrow \infty} \mathcal{N}^{\psi_n}(x) &=  \sum_{k\in\mathcal{K}} \sum_{\la\in\La(\Om)} \lim_{n\rightarrow \infty} w_{\psi_n}^2(\la) \inner{T_x f}{T_\la g_k} \inner{T_\la h_k}{T_x f}\\
& = \sum_{k\in\mathcal{K}} \sum_{\la\in\La(\Om)} \lim_{n\rightarrow \infty} \psi_n^2(p_2(\ga)) \, \inner{T_x f}{T_\la g_k}\inner{T_\la h_k}{T_x f}\quad (\la=p_1(\ga),\, \ga\in\Ga)\\
&= \sum_{k\in\mathcal{K}} \sum_{\la\in\La(\Om)} \abs{\Om}^{-2}\big [ \mathds{1}_{\Om}(p_2(\ga))\big ]^2 \, \inner{T_x f}{T_\la g_k}\inner{T_\la h_k}{T_x f}\\
& =  \abs{\Om}^{-2} \mathcal{N}(x)\,.
\end{align*}
By the uniqueness of the limits, we must have $\mathcal{N}(x) = \sum_{\eta\in\Ga^*} \widehat{\mathcal{N}}(\eta) e^{-2\pi i p_1^*(\eta) \cdot x}$, and by the uniqueness of the Fourier series, \eqref{eq:series} is the Fourier series of $\mathcal{N}(x)$.
\end{proof}

Assuming that $\{T_\la g_k:k\in\mathcal{K}, \la\in\La(\Om)\}$ and $\{T_\la h_k:k\in\mathcal{K}, \la\in\La(\Om)\}$ are Bessel sequences, it follows from Proposition~\ref{prop:bessel}, that the sums $\sum_{k\in\mathcal{K}}\abs{\widehat{g_k}(t)}^2$ and $\sum_{k\in\mathcal{K}}\abs{\widehat{h_k}(t)}^2$ are bounded from above and the function $\mathcal{N}(x)$ of Proposition~\ref{prop:limit function N} coincides with the function $\mathcal{N}(x)$ defined in \eqref{eq:main function}. Indeed, using Proposition~\ref{prop:operator shifts}, we can write $\mathcal{N}(x)$ from \eqref{eq:main function} explicitly as
\begin{align*}
\mathcal{N}(x) &=  \widetilde{\mathcal{N}}(\La(\Om)-x) = \inner{S_{g,h}^{\La(\Om)-x}f}{f} = \inner{S_{g,h}^{\La(\Om)}T_x f}{T_x f}\\ 
&= \sum_{k\in\mathcal{K}} \sum_{\la\in\La(\Om)} \inner{T_x f}{T_\la g_k} \inner{T_\la h_k}{T_x f}\,.
\end{align*}
By the uniqueness of the Fourier coefficients, $\widehat{\mathfrak{N}}(\eta)$ in \eqref{eq:fourier coeff} equal $\widehat{\mathcal{N}}(\eta)$ from Proposition~\ref{prop:limit function N}, for all $\eta\in\Ga^*$.

We are now in a position to characterize tight frames and dual frames for $\Lt(\R^m)$. We start with tight frames. The first result is analogous to the known result for regular frames of translates which is due to Ron and Shen \cite{RS95} (see also \cite{J98}) and was reformulated with a different proof in \cite{HLW02}. Our results for model sets relies on the similar methods developed in \cite{HLW02}.

\begin{theorem}\label{thm:tight frame1}
Let $\La(\Om)$ be a simple generic model set, $\Om$ symmetric around the origin, and $g_k\in W(\Lt,\ell^1)$ for every $k\in\mathcal{K}$. Then the family $\{T_\la g_k:k\in\mathcal{K}, \la\in\La(\Om)\}$ is a normalized tight frame for $\Lt(\R^m)$, that is 
\begin{equation}\label{eq:tight1}
\sum_{k\in\mathcal{K}} \sum_{\la\in\La(\Om)} \abs{\inner{f}{T_\la g_k}}^2 = \norm{f}_2^2 \quad \mbox{for all $f\in\Lt(\R^m)$}
\end{equation} 
if and only if
\begin{equation}\label{eq:cond1}
D(\La(\Om))\, \Phi (-p_2^*(\eta)) \sum_{k\in\mathcal{K}}\overline{\widehat{g_k}(t)}\,\widehat{g_k}(t-p_1^*(\eta)) = \delta_{\eta,0} \quad \mbox{for a.e. $t\in\R^m$}\,,
\end{equation}
for each $\eta\in \Ga^*$, where $\delta$ is the Kronecker delta and $\Phi$ is defined in \eqref{eq:function phi}.
\end{theorem}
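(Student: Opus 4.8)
The plan is to deduce both directions from Proposition~\ref{prop:limit function N} applied with $h_k = g_k$, combined with the uniqueness of Fourier coefficients of a continuous (almost periodic) function and a density argument to pass from $\mathcal{D}$ to all of $\Lt(\R^m)$ (the standing assumption $g_k\in W(\Lt,\ell^1)$ being in force). Throughout I abbreviate
\[
G_\eta(t) := D(\La(\Om))\,\widehat{\Phi^2}(-p_2^*(\eta)) \sum_{k\in\mathcal{K}} \overline{\widehat{g_k}(t)}\,\widehat{g_k}(t-p_1^*(\eta))\,, \qquad \eta\in\Ga^*,
\]
so that \eqref{eq:cond1} reads $G_\eta = \delta_{\eta,0}$ a.e. To invoke Proposition~\ref{prop:limit function N} I first check its standing hypothesis $\sum_k \abs{\widehat{g_k}(t)}^2 \leq B_g$: in the forward direction this follows from Proposition~\ref{prop:bessel}, since a normalized tight frame is Bessel with bound $1$ and hence $D(\La(\Om))\sum_k\abs{\widehat{g_k}(t)}^2\leq 1$; in the reverse direction it follows from the $\eta=0$ instance of \eqref{eq:cond1}, which together with $\widehat{\Phi^2}(0)=1$ forces $\sum_k\abs{\widehat{g_k}(t)}^2 = D(\La(\Om))^{-1}$. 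In either case, for $f\in\mathcal{D}$ the function $\mathcal{N}(x) = \sum_k\sum_{\la\in\La(\Om)} \abs{\inner{T_x f}{T_\la g_k}}^2$ is continuous and equals its Fourier series with coefficients $\widehat{\mathcal{N}}(\eta) = \int_{\R^m} \widehat{f}(t)\overline{\widehat{f}(t-p_1^*(\eta))}\,G_\eta(t)\,dt$. I will repeatedly use that $p_1^*$ is injective on $\Ga^*$, so the exponents $p_1^*(\eta)$ are distinct and $p_1^*(\eta)=0$ exactly when $\eta=0$.

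For the ``if'' direction, assume \eqref{eq:cond1}. Then $G_\eta = \delta_{\eta,0}$ gives $\widehat{\mathcal{N}}(\eta)=0$ for $\eta\neq 0$ and $\widehat{\mathcal{N}}(0)=\int_{\R^m}\abs{\widehat{f}(t)}^2\,dt = \norm{f}_2^2$, so $\mathcal{N}\equiv\norm{f}_2^2$; evaluating at $x=0$ yields \eqref{eq:tight1} for all $f\in\mathcal{D}$. To extend this to $\Lt(\R^m)$ I would first establish the Bessel property: for $f\in\Lt(\R^m)$ take $f_n\to f$ with $f_n\in\mathcal{D}$, and Fatou's lemma over the countable index set gives $\sum_{k,\la}\abs{\inner{f}{T_\la g_k}}^2 \leq \liminf_n \norm{f_n}_2^2 = \norm{f}_2^2$. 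Hence the analysis operator is bounded, the quadratic form $f\mapsto \sum_{k,\la}\abs{\inner{f}{T_\la g_k}}^2$ is continuous on $\Lt(\R^m)$, and equality on the dense set $\mathcal{D}$ propagates to all of $\Lt(\R^m)$.

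For the ``only if'' direction, the tight-frame identity applied to $T_x f$ gives $\mathcal{N}(x)=\norm{T_x f}_2^2 = \norm{f}_2^2$, a constant; by uniqueness of its Fourier series, $\widehat{\mathcal{N}}(\eta)=0$ for $\eta\neq 0$ and $\widehat{\mathcal{N}}(0)=\norm{f}_2^2$, for every $f\in\mathcal{D}$. The $\eta=0$ relation reads $\int_{\R^m}\abs{\widehat{f}(t)}^2(G_0(t)-1)\,dt=0$; letting $\abs{\widehat{f}}^2$ range over bounded compactly supported functions forces $G_0=1$ a.e., which is \eqref{eq:cond1} at $\eta=0$. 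For fixed $\eta\neq 0$ put $\be=p_1^*(\eta)\neq 0$ and test against $\widehat{f}=u\,\mathds{1}_B + v\,\mathds{1}_{B+\be}$, where $B$ is a ball small enough that $B$, $B+\be$, $B+2\be$ are pairwise disjoint. Then $\widehat{f}(t)\overline{\widehat{f}(t-\be)}$ is supported on $B+\be$, so the vanishing of $\widehat{\mathcal{N}}(\eta)$ collapses to $\int_{B+\be} v(t)\overline{u(t-\be)}\,G_\eta(t)\,dt=0$; varying the bounded functions $u,v$ and shrinking $B$ around an arbitrary point forces $G_\eta=0$ a.e., which is \eqref{eq:cond1} for $\eta\neq 0$.

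I expect the main obstacle to be this final inversion step in the ``only if'' direction: converting the countable family of integral identities $\widehat{\mathcal{N}}(\eta)=0$ into the pointwise conditions \eqref{eq:cond1}. The difficulty is that the integrand couples the values of $\widehat{f}$ at $t$ and at $t-p_1^*(\eta)$, so one must engineer test functions whose off-diagonal autocorrelation $\widehat{f}(t)\overline{\widehat{f}(t-\be)}$ isolates a single shift $\be=p_1^*(\eta)$; the pairwise-disjointness arrangement of $B$, $B+\be$, $B+2\be$ is exactly what achieves this. A lesser but genuine subtlety is the density extension in the ``if'' direction, which must be routed through the Bessel bound rather than asserted directly, since the quadratic form is continuous only once boundedness of the analysis operator is known.
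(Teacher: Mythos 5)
Your proof is correct, and its skeleton is the same as the paper's: both directions are routed through Proposition~\ref{prop:limit function N} with $h_k=g_k$, with the boundedness hypothesis $\sum_k|\widehat{g_k}|^2\le B_g$ supplied by Proposition~\ref{prop:bessel} in the forward direction and by the $\eta=0$ case of \eqref{eq:cond1} (using $\widehat{\Phi^2}(0)=1$) in the converse, after which the tight-frame identity is equivalent to the almost periodic function $\mathcal{N}$ being constant, and the conclusion is read off from uniqueness of its Fourier coefficients (your explicit remark that $p_1^*$ is injective on $\Ga^*$ is a point the paper leaves tacit). Where you genuinely differ is the inversion step turning $\widehat{\mathcal{N}}(\eta)=0$ into the pointwise condition: the paper polarizes \eqref{eq:identity1} into a bilinear identity in $(f^1,f^2)$ and runs an approximate-identity argument $s_n^2\ast G_\eta\to G_\eta$ at Lebesgue points, whereas you achieve the polarization by hand with the single test function $\widehat{f}=u\,\mathds{1}_B+v\,\mathds{1}_{B+\be}$, whose disjointness arrangement ($B$, $B+\be$, $B+2\be$ pairwise disjoint, available since $\be=p_1^*(\eta)\neq0$) kills all but the cross term; varying bounded $u,v$ and translating $B$ then gives $G_\eta=0$ a.e.\ with no Lebesgue-point machinery (and it sidesteps the centring of paired bumps, where the paper's choice $\widehat{f^2_n}(t)=s_n(t_0-t+p_1^*(\eta))$ has a sign slip: it should be $s_n(t_0-t-p_1^*(\eta))$ for the product to collapse to $s_n^2(t_0-t)$). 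You also spell out, via Fatou plus continuity of the quadratic form, the density reduction to $f\in\mathcal{D}$ that the paper outsources to \cite{HW96}. Two small points to add for completeness: record that each $G_\eta$ is in $L^\infty$ (hence locally integrable) by Cauchy--Schwarz and the bound on $\sum_k|\widehat{g_k}|^2$, so your test integrals are well defined; and in the $\eta=0$ step, note that nonnegative bounded compactly supported $\phi$ arise as $|\widehat{f}|^2$ with $\widehat{f}=\phi^{1/2}\in W(L^\infty,\ell^1)$, which is enough to force $G_0=1$ a.e.
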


\begin{proof}
It is sufficient to prove the theorem when $f\in\mathcal{D}$ \cite{HW96}. If relation \eqref{eq:tight1} holds, then $\{T_\la g_k:k\in\mathcal{K},\la\in\La(\Om)\}$ is a Bessel sequence and by the Remark after Proposition~\ref{prop:bessel}, $\sum_{k\in\mathcal{K}}\abs{\widehat{g_k}(t)}^2$ is bounded from above. Also, if  \eqref{eq:tight1} holds, then by Proposition~\ref{prop:operator shifts}, $\{T_\la g_k:k\in\mathcal{K},\la\in\La(\Om)-x\}$ is a tight frame for all $x\in\R^m$ and, so the function
\begin{equation}\label{eq:function O}
\widetilde{\mathcal{O}}(\La(\Om)-x) := \sum_{k\in\mathcal{K}} \sum_{\la\in\La(\Om)-x} \abs{\inner{f}{T_\la g_k}}^2
\end{equation}
is constant on the orbit of $\La(\Om)$ and equals $\norm{f}_2^2$. We can define the function $\mathcal{O}(x)$ on all of $\R^m$ as in \eqref{eq:main function}. It coincides with the function $\mathcal{N}(x)$ of Proposition~\ref{prop:limit function N}, when $g_k=h_k$ for every $k\in\mathcal{K}$. Then, $\mathcal{E}(x) = \mathcal{O}(x) - \norm{f}_2^2$, for all $f\in\mathcal{D}$, is given by the trigonometric series with coefficients
\begin{equation*}
\widehat{\mathcal{E}}(\eta) = \left \{ \begin{array}{ll} \widehat{\mathcal{O}}(0) - \norm{f}_2^2\,, &\eta = 0\\
\widehat{\mathcal{O}}(\eta)\,, &\eta \neq 0\,, \end{array} \right .
\end{equation*}
for all $\eta\in\Ga^*$. 
Therefore, if \eqref{eq:tight1} holds, then $\mathcal{E}(x)=0$, and all its coefficients have to be zero. Thus, by Proposition~\ref{prop:limit function N} with $g_k=h_k$ for every $k\in\mathcal{K}$, we have 
\begin{equation}\label{eq:identity1}
D(\La(\Om))\, \Phi(-p_2^*(\eta)) \int_{\R^m} \widehat{f}(t) \overline{\widehat{f}(t-p_1^*(\eta))} \left (\sum_{k\in\mathcal{K}} \overline{\widehat{g_k}(t)}\,\widehat{g_k}(t-p_1^*(\eta)) \right )\, dt = \delta_{\eta,0} \norm{f}_2^2 \,,
\end{equation}
with $\eta\in \Ga^*$, for every $f\in \mathcal{D}$. 

Consider the case $\eta=0$ and let 
\begin{equation*}
G_0(t) = D(\La(\Om))\, \Phi(0) \sum_{k\in\mathcal{K}} \abs{\widehat{g_k}(t)}^2 = D(\La(\Om))\, \sum_{k\in\mathcal{K}} \abs{\widehat{g_k}(t)}^2 \,.
\end{equation*}
From the Bessel property of $\{T_\la g_k:k\in\mathcal{K},\la\in\La(\Om)\}$, it follows that
\begin{equation}\label{eq:bound on g}
D(\La(\Om))\, \sum_{k\in\mathcal{K}} \abs{\widehat{g_k}(t)}^2 \leq 1 \quad \mbox{for a.e. $t\in\R^m$}\,,
\end{equation}
and $G_0$ is locally integrable. Let $s$ be a bounded compactly supported function given by $s(t) = \mathds{1}_{B(0,1)}(t)$, where $B(0,1) = [0,1]^m$ is a ball in $\R^m$ centered at zero. We form a sequence of functions $s_n(t) = \sqrt{n} \,s(n\cdot t) = \sqrt{n}\, \mathds{1}_{B(0,1/n)}(t)$. Then $\{ s_n^2\}_{n\in\N}$ is an approximate identity and for each Lebesgue point $t_0\in\R^m$ of $G_0$, we have
\begin{align}
\lim_{n\rightarrow \infty}\big (s_n^2 \ast G_0\big )(t_0) &= \lim_{n\rightarrow \infty}\int_{\R^m} s_n^2(t_0-t) G_0(t)\, dt \nonumber \\ 
&=  \lim_{n\rightarrow \infty} n^m  \int_{\abs{t_0-t}\leq (1/n)^m} G_0(t)\,dt = G_0(t_0)\,. \label{eq:aprox identity1}
\end{align}
Therefore, by letting $\widehat{f_n}(t)=s_n(t_0-t)$, we have $\norm{f_n}_2=1$ and $f_n\in\mathcal{D}$. By \eqref{eq:identity1} and \eqref{eq:aprox identity1},
\begin{equation*}
1 =  \norm{f_n}_2^2 =  \lim_{n\rightarrow \infty}  \int_{\R^m} \absbig{\widehat{f_n}(t)}^2 G_0(t)\,dt = G_0(t_0) \quad \mbox{for a.e. $t_0\in\R^m$,}
\end{equation*}
and \eqref{eq:cond1} is satisfied for $\eta=0$. When $\eta\neq 0$, let
\begin{equation*}
G_\eta(t) = D(\La(\Om))\,\Phi(-p_2^*(\eta)) \sum_{k\in\mathcal{K}} \overline{\widehat{g_k}(t)}\, \widehat{g_k}(t-p_1^*(\eta))\,.
\end{equation*}
By polarization of \eqref{eq:identity1} we have
\begin{equation*}
\int_{\R^m} \widehat{f^1}(t) \overline{\widehat{f^2}(t-p_1^*(\eta))} G_\eta(t) = 0
\end{equation*}
for all $f^1,f^2\in\mathcal{D}$. By Cauchy-Schwarz inequality and \eqref{eq:bound on g}, $G_\eta$ is locally integrable. We choose $f^1_n$ and $f^2_n$ such that
\begin{equation*}
\widehat{f^1_n}(t) = s_n(t_0-t) \quad \mbox{and} \quad \widehat{f^2_n}(t) = s_n(t_0-t+p_1^*(\eta))\,.
\end{equation*}
Then $\norm{f^1_n}_2=1$ and $\norm{f^2_n}_2=1$, and for each Lebesgue point $t_0\in\R^m$ of $G_\eta$, we have
\begin{align*}
0&=\lim_{n\rightarrow \infty}\int_{\R^m} \widehat{f^1_n}(t) \overline{\widehat{f^2_n}(t-p_1^*(\eta))}  G_\eta(t)\, dt= \lim_{n\rightarrow \infty}\int_{\R^m} s_n^2(t_0-t) G_\eta(t)\, dt\\
&= \lim_{n\rightarrow \infty}\big (s_n^2 \ast G_\eta \big )(t_0) = G_\eta(t_0)\,.
\end{align*}
Hence, \eqref{eq:cond1} is satisfied for $\eta\neq 0$.

Conversely, assume that \eqref{eq:cond1} holds. Then $D(\La(\Om))\,\sum_{k\in\mathcal{K}} \abs{\widehat{g_k}(t)}^2 = 1$ for a.e. $t\in\R^m$, and, by Proposition~\ref{prop:limit function N} with $g_k=h_k$ for every $k\in\mathcal{K}$, we can define function $\mathcal{O}(x)$ as in \eqref{eq:function O}. Then, by Proposition~\ref{prop:limit function N} and relation \eqref{eq:cond1}, the function $\mathcal{O}(x)$ is given by the trigonometric series
\begin{equation*}
\sum_{\eta\in \Ga^*} \widehat{\mathcal{O}}(\eta) e^{-2\pi i x \cdot p_1^*(\eta)} = \mathcal{O}(x)\,,
\end{equation*}
for every $x\in\R^m$, with
\begin{equation*}
\widehat{\mathcal{O}}(\eta) = \delta_{\eta,0} \int_{\R^m} \widehat{f}(t) \overline{\widehat{f}(t-p_1^*(\eta))}\, dt\,.
\end{equation*}
Therefore, $\mathcal{O}(x)$ is constant and $\mathcal{O}(x) = \norm{f}_2^2$. This means that $\{T_\la g_k: k\in\mathcal{K}, \la\in\La(\Om)-x\}$ is a tight frame for all $x\in\R^m$, in particular, for $x=0$, which proves the claim.
\end{proof}

We can also charcterize tight frames in a different manner. The following result is similar to the characterization obtained by Labate et. al.. in \cite{L02, HLW02}  for the family of tight frame generators under multi-integer shifts.

\begin{theorem}\label{thm:tight frame2}
Let $\La(\Om)$ be a simple generic model set with $\Om$ symmetric around the origin and $g_k\in W(\Lt,\ell^1)$ for every $k\in\mathcal{K}$. Then the family $\{T_\la g_k: k\in\mathcal{K}, \la\in\La(\Om)\}$ is a normalized tight frame for $\Lt(\R^m)$ if and only if it is a  Bessel sequence with upper frame bound $B_g=1$ and 
\begin{equation}\label{eq:rel2}
D(\La(\Om))\, \sum_{k\in\mathcal{K}} \absbig{\widehat{g_k}(t)}^2 = 1 \quad \mbox{for a.e. $t\in\R^m$}
\end{equation}
holds.
\end{theorem}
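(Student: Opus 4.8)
I would first dispatch the forward implication, which is immediate from Theorem~\ref{thm:tight frame1}: a normalized tight frame satisfies \eqref{eq:tight1} and is therefore a Bessel sequence with $B_g=1$, while the relations \eqref{eq:cond1} hold for every $\eta\in\Ga^*$, so taking $\eta=0$ and using $\widehat{\Phi^2}(0)=1$ collapses \eqref{eq:cond1} into exactly \eqref{eq:rel2}. The substance of the theorem is the converse, asserting that the single diagonal relation \eqref{eq:rel2} together with the normalization $B_g=1$ already forces tightness.

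For the converse I would reduce to $f\in\mathcal{D}$, since $\mathcal{D}$ is dense and both $f\mapsto\sum_{k}\sum_{\la}\absbig{\inner{f}{T_\la g_k}}^2=\inner{S_g^{\La(\Om)}f}{f}$ and $f\mapsto\norm{f}_2^2$ are continuous on $\Lt(\R^m)$, the former because the Bessel bound makes $S_g^{\La(\Om)}$ bounded. Fixing such an $f$, I would work with
\begin{equation*}
\mathcal{N}(x)=\sum_{k\in\mathcal{K}}\sum_{\la\in\La(\Om)}\absbig{\inner{T_x f}{T_\la g_k}}^2=\inner{S_g^{\La(\Om)}T_x f}{T_x f}\,,
\end{equation*}
which is the function of Proposition~\ref{prop:limit function N} in the diagonal case $h_k=g_k$. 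By that proposition $\mathcal{N}$ is continuous, and since it is local with respect to $\La(\Om)$ in the sense of Section~\ref{sec:local functions} it is almost periodic; it is also manifestly nonnegative.

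The plan is then to trap $\mathcal{N}$ between a pointwise ceiling and a matching mean. The Bessel bound $B_g=1$ gives $\mathcal{N}(x)\le\norm{T_x f}_2^2=\norm{f}_2^2$ for every $x$. On the other hand the mean value of $\mathcal{N}$ is its zeroth Fourier coefficient, and by Proposition~\ref{prop:limit function N} combined with \eqref{eq:rel2},
\begin{equation*}
\M\{\mathcal{N}\}=\widehat{\mathcal{N}}(0)=D(\La(\Om))\int_{\R^m}\absbig{\widehat{f}(t)}^2\sum_{k\in\mathcal{K}}\absbig{\widehat{g_k}(t)}^2\,dt=\int_{\R^m}\absbig{\widehat{f}(t)}^2\,dt=\norm{f}_2^2\,.
\end{equation*}
Thus $\norm{f}_2^2-\mathcal{N}(x)$ is a nonnegative almost periodic function with vanishing mean, so Theorem~\ref{thm:ap nonnegative} forces it to be identically zero; evaluating $\mathcal{N}(x)=\norm{f}_2^2$ at $x=0$ yields \eqref{eq:tight1} on $\mathcal{D}$, and density completes the argument.

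The main obstacle is the converse, and I expect its crux to be the clean use of Theorem~\ref{thm:ap nonnegative}: the delicate balance is that $B_g=1$ is precisely what renders $\norm{f}_2^2-\mathcal{N}$ nonnegative while \eqref{eq:rel2} is precisely what kills its mean, so any weaker hypothesis would leave a nontrivial nonnegative remainder and break the rigidity. A secondary point demanding care is verifying that $\mathcal{N}$ is honestly almost periodic rather than merely continuous, which is why I would route through its interpretation as a local function on the hull $X(\La(\Om))$ before applying the mean-value theorem for almost periodic functions.
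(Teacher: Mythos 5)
Your proof is correct and follows essentially the same route as the paper: the forward direction via Theorem~\ref{thm:tight frame1}, and the converse by applying Proposition~\ref{prop:limit function N} in the diagonal case $h_k=g_k$, identifying $\M\{\mathcal{N}\}=\widehat{\mathcal{N}}(0)=\norm{f}_2^2$ via \eqref{eq:rel2}, using the Bessel bound $B_g=1$ for nonnegativity of $\norm{f}_2^2-\mathcal{N}$, and invoking Theorem~\ref{thm:ap nonnegative}. Your version even fixes a small slip in the paper, which writes $\text{vol}(\Ga)^{-1}$ instead of $D(\La(\Om))$ in the formula for the zeroth Fourier coefficient.
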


\begin{proof}
Let us first assume that $\{T_\la g_k:k\in\mathcal{K}, \la\in\La(\Om)\}$ is a normalized tight frame for $\Lt(\R^m)$. Then the Bessel property follows from the tight frame property of $\{T_\la g_k:k\in\mathcal{K}, \la\in\La(\Om)\}$, and relation \eqref{eq:rel2} follows from Theorem~\ref{thm:tight frame1}.

For the converse, by $\{T_\la g_k:k\in\mathcal{K}, \la\in\La(\Om)\}$ being a Bessel sequence with upper frame bound $B_{g}=1$, and by the remark after Proposition~\ref{prop:bessel}, $\sum_{k\in\mathcal{K}} \abs{\widehat{g_k}(t)}^2$ is bounded from above for a.e. $t\in\R^m$. Hence, by Proposition~\ref{prop:limit function N} with $g_k=h_k$ for every $k\in\mathcal{K}$, the function $\mathcal{E}(x) = \norm{f}_2^2 - \mathcal{O}(x)$ is almost periodic, where $\mathcal{O}(x) = \sum_{k\in\mathcal{K}} \sum_{\la\in\La(\Om)} \abs{\inner{T_x f}{T_\la g_k}}^2$.
We then have
\begin{align*}
\M_x\big\{ \mathcal{E}(x)\big \} &= \norm{f}_2^2 - \M_x\big\{ \mathcal{O}(x)\big \}  = \norm{f}_2^2 - \widehat{\mathcal{O}}(0)\\ 
&= \norm{f}_2^2 - \text{vol}(\Ga)^{-1} \int_{\R^m} \absbig{\widehat{f}(t)}^2 \sum_{k\in\mathcal{K}}\absbig{\widehat{g_k}(t)}^2 \, dt\,,
\end{align*}
where we used Proposition~\ref{prop:limit function N} to compute $\widehat{\mathcal{O}}(0)$. By \eqref{eq:rel2}, $\widehat{\mathcal{O}}(0) = \norm{f}_2^2$, and  therefore $\M_x\{ \mathcal{E}(x)\}=0$. Since, by the Bessel property of $\{T_\la g_k:k\in\mathcal{K}, \la\in\La(\Om)\}$, $\mathcal{E}(x) \geq 0$, we can apply Theorem~\ref{thm:ap non-negative} and conclude that $\mathcal{O}(x) = \norm{f}_2^2$.
This means that $\{T_\la g_k: k\in\mathcal{K}, \la\in\La(\Om)-x\}$ is a normalized tight frame for all $x\in\R^m$, in particular, for $x=0$, which proves the claim.
\end{proof}

From Proposition~\ref{prop:limit function N}, we will now obtain a characterization of the dual frame generators for $\Lt(\R^m)$. A characterization of the shift-invariant dual frame generators was done in \cite{L02,HLW02} and similar one in \cite{RS95}.

\begin{theorem}\label{thm:dual frame}
Let $\La(\Om)$ be a simple generic model set, $\Om$ symmetric around the origin and $g_k, h_k \in W(\Lt,\ell^1)$ for every $k\in\mathcal{K}$. Let $\{T_\la g_k: k\in\mathcal{K}, \la\in\La(\Om)\}$ and $\{T_\la h_k:k\in\mathcal{K}, \la\in\La(\Om)\}$ be Bessel sequences in $\Lt(\R^m)$, with upper frame bounds $B_{g}$ and $B_{h}$, respectively. Then
\begin{equation}\label{eq:tight}
\sum_{k\in\mathcal{K}} \sum_{\la\in\La(\Om)} \inner{f}{T_\la g_k}  \inner{T_\la h_k}{f} = \norm{f}_2^2 \quad \mbox{for all $f\in\Lt(\R^m)$}
\end{equation} 
if and only if
\begin{equation}\label{eq:cond}
D(\La(\Om))\, \Phi(-p_2^*(\eta)) \sum_{k\in\mathcal{K}} \overline{\widehat{g_k}(t)}\, \widehat{h_k}(t-p_1^*(\eta)) = \delta_{\eta,0} \quad \mbox{for a.e. $t\in\R^m$}\,,
\end{equation}
for each $\eta\in \Ga^*$, where $\delta$ is the Kronecker delta and $\Phi$ is defined in \eqref{eq:function phi}.
\end{theorem}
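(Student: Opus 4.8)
\section*{Proof proposal}

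The plan is to run the argument of Theorem~\ref{thm:tight frame1} almost verbatim, keeping $g_k$ and $h_k$ distinct throughout and replacing the auto\-correlation $\sum_k\overline{\widehat{g_k}}\,\widehat{g_k}$ by the cross\-correlation $\sum_k\overline{\widehat{g_k}}\,\widehat{h_k}$. The central object is the function $\mathcal{N}(x)$ of Proposition~\ref{prop:limit function N}. Since it suffices to test \eqref{eq:tight} on the dense set $\mathcal{D}$, I would fix $f\in\mathcal{D}$ and record, using the covariance of the mixed frame operator (the analogue of Proposition~\ref{prop:operator shifts} noted at the end of Section~\ref{sec:not}), that
\begin{equation*}
\mathcal{N}(x)=\sum_{k\in\mathcal{K}}\sum_{\la\in\La(\Om)}\inner{T_x f}{T_\la g_k}\inner{T_\la h_k}{T_x f}=\inner{S_{g,h}^{\La(\Om)-x}f}{f}.
\end{equation*}
The hypothesis that both families are Bessel, together with Proposition~\ref{prop:bessel} and the remark following it, guarantees that $\sum_k\abs{\widehat{g_k}}^2$ and $\sum_k\abs{\widehat{h_k}}^2$ are bounded, so Proposition~\ref{prop:limit function N} applies and $\mathcal{N}$ coincides pointwise with its Fourier series with the coefficients $\widehat{\mathcal{N}}(\eta)$ displayed there.

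For the forward implication I would note that \eqref{eq:tight} holds in particular for $T_x f$; combined with the covariance this gives $\mathcal{N}(x)=\inner{S_{g,h}^{\La(\Om)}T_x f}{T_x f}=\norm{T_x f}_2^2=\norm{f}_2^2$, so $\mathcal{N}$ is constant on $\R^m$. Matching Fourier coefficients yields $\widehat{\mathcal{N}}(\eta)=\delta_{\eta,0}\norm{f}_2^2$ for every $f\in\mathcal{D}$. From here I would extract \eqref{eq:cond} pointwise a.e.\ by the approximate-identity technique of Theorem~\ref{thm:tight frame1}: for $\eta=0$, since $\widehat{\Phi^2}(0)=1$, testing against $\widehat{f_n}(t)=s_n(t_0-t)$ at Lebesgue points forces $D(\La(\Om))\sum_k\overline{\widehat{g_k}(t)}\,\widehat{h_k}(t)=1$; for $\eta\neq0$ I would first polarize the vanishing sesquilinear form $(f^1,f^2)\mapsto\int\widehat{f^1}(t)\,\overline{\widehat{f^2}(t-p_1^*(\eta))}\,G_\eta(t)\,dt$, where $G_\eta(t)=D(\La(\Om))\widehat{\Phi^2}(-p_2^*(\eta))\sum_k\overline{\widehat{g_k}(t)}\,\widehat{h_k}(t-p_1^*(\eta))$, and then insert the shifted pair $\widehat{f^1_n}(t)=s_n(t_0-t)$, $\widehat{f^2_n}(t)=s_n(t_0-t-p_1^*(\eta))$ so the product collapses to $s_n^2(t_0-t)$, forcing $G_\eta(t_0)=0$ at every Lebesgue point of $G_\eta$.

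For the converse I would reverse this last step: assuming \eqref{eq:cond}, I substitute it into the coefficient formula of Proposition~\ref{prop:limit function N}. The $\eta=0$ term gives $\widehat{\mathcal{N}}(0)=\int\abs{\widehat{f}(t)}^2\,dt=\norm{f}_2^2$ and every $\eta\neq0$ term vanishes, whence $\mathcal{N}(x)\equiv\norm{f}_2^2$; evaluating at $x=0$ recovers \eqref{eq:tight} on $\mathcal{D}$, and a density argument using that both families are Bessel extends it to all of $\Lt(\R^m)$.

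The hard part will be the passage from the integral identities to the pointwise a.e.\ statement \eqref{eq:cond}, now with the genuinely complex, non-symmetric cross term $\sum_k\overline{\widehat{g_k}(t)}\,\widehat{h_k}(t-p_1^*(\eta))$. Unlike Theorem~\ref{thm:tight frame2}, no non-negativity is at hand, so the shortcut through Theorem~\ref{thm:ap nonnegative} is unavailable and one must match all Fourier coefficients individually. The care required is twofold: first, to verify that $G_\eta$ is only \emph{locally} integrable — which follows from Cauchy--Schwarz and the two Bessel bounds — so that the approximate-identity limits must be taken at Lebesgue points of $G_\eta$; and second, to use the full complex polarization in order to decouple the two arguments $t$ and $t-p_1^*(\eta)$ in the case $\eta\neq0$, since the single test function $f$ appears in both slots of $\widehat{\mathcal{N}}(\eta)$.
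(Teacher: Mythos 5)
Your proposal is correct and follows essentially the same route as the paper's own proof: reduction to $f\in\mathcal{D}$, the Bessel bounds feeding into Proposition~\ref{prop:limit function N}, matching Fourier coefficients of the constant almost periodic function $\mathcal{N}(x)=\inner{S_{g,h}^{\La(\Om)-x}f}{f}$, the approximate-identity argument at Lebesgue points for $\eta=0$, polarization plus the shifted pair $(\widehat{f^1_n},\widehat{f^2_n})$ for $\eta\neq 0$, and direct substitution of \eqref{eq:cond} into the coefficient formula for the converse. As a minor remark, your choice $\widehat{f^2_n}(t)=s_n(t_0-t-p_1^*(\eta))$ is the one that actually makes $\widehat{f^1_n}(t)\,\overline{\widehat{f^2_n}(t-p_1^*(\eta))}$ collapse to $s_n^2(t_0-t)$; the paper writes $s_n(t_0-t+p_1^*(\eta))$, which appears to be a sign typo.
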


\begin{proof}
It suffices to prove the theorem for $f\in\mathcal{D}$ \cite{FGWW97}. Since $\{T_\la g_k:k\in\mathcal{K}, \la\in\La(\Om)\}$ and $\{T_\la h_k:k\in\mathcal{K}, \la\in\La(\Om), k\in\mathcal{K}\}$ are Bessel sequences, it follows from the remark after Proposition~\ref{prop:bessel} that
\begin{equation}\label{eq:g and h}
D(\La(\Om))\, \sum_{k\in\mathcal{K}} \absbig{\widehat{g_k}(t)}^2 \leq B_{g} \quad \mbox{and} \quad D(\La(\Om))\, \sum_{k\in\mathcal{K}} \absbig{\widehat{h_k}(t)}^2 \leq B_{h}
\end{equation}
for a.e. $t\in\R^m$. Therefore we can apply Proposition~\ref{prop:limit function N}, which involves the function $\mathcal{N}(x)$. If \eqref{eq:tight} holds, then $\mathcal{N}(x) = \norm{f}_2^2$ for all $x\in\R^m$. Since $\mathcal{N}(x)$ is almost periodic, so is $\mathcal{E}(x) = \mathcal{N}(x)-\norm{f}_2^2$ with Fourier coefficients given by
\begin{equation*}
\widehat{\mathcal{E}}(\eta) = \left \{ \begin{array}{ll} \widehat{\mathcal{N}(0)}- \norm{f}_2^2\,, &\eta = 0\\
\widehat{\mathcal{N}}(\eta)\,, &\eta \neq 0\,, \end{array} \right .
\end{equation*}
Since $\mathcal{E}(x)=0$, by the uniqueness of Fourier coefficients of almost periodic functions, all its Fourier coefficients have to be zero. Thus, by Proposition~\ref{prop:limit function N}, we have 
\begin{equation}\label{eq:identity2}
D(\La(\Om))\, \Phi(-p_2^*(\eta)) \int_{\R^m} \widehat{f}(t) \overline{\widehat{f}(t-p_1^*(\eta))} \left (\sum_{k\in\mathcal{K}} \overline{\widehat{g_k}(t)}\,\widehat{h_k}(t-p_1^*(\eta)) \right )\, dt = \delta_{\eta,0} \norm{f}_2^2 \,,
\end{equation}
with $\eta\in \Ga^*$, for every $f\in \mathcal{D}$. 
Analogously to the proof of Theorem~\ref{thm:tight frame1}, consider the case $\eta=0$ and let 
\begin{equation*}
H_0(t) = D(\La(\Om))\, \sum_{k\in\mathcal{K}} \overline{\widehat{g_k}(t)}\, \widehat{h_k}(t)\,.
\end{equation*}
By the Cauchy-Schwarz inequality and \eqref{eq:g and h}, $H_0$ is locally integrable. Let $s$ be a bounded compactly supported function given by $s(t) = \mathds{1}_{B(0,1)}(t)$, where $B(0,1) = [0,1]^m$ is a ball in $\R^m$ centered at zero. We form a sequence of functions $s_n(t) = \sqrt{n} \,s(n\cdot t) = \sqrt{n}\, \mathds{1}_{B(0,1/n)}(t)$. Then $\{ s_n^2\}_{n\in\N}$ is an approximate identity and for each Lebesgue point $t_0\in\R^m$ of $H_0$, we have
\begin{align}
\lim_{n\rightarrow \infty}\big (s_n^2 \ast H_0\big )(t_0) &= \lim_{n\rightarrow \infty}\int_{\R^m} s_n^2(t_0-t) H_0(t)\, dt \nonumber \\ 
&=  \lim_{n\rightarrow \infty} n^m  \int_{\abs{t_0-t}\leq (1/n)^m} H_0(t)\,dt = H_0(t_0)\,. \label{eq:aprox identity2}
\end{align}
Therefore, by letting $\widehat{f_n}(t)=s_n(t_0-t)$, we have $\norm{f_n}_2=1$ and $f_n\in\mathcal{D}$. By \eqref{eq:identity2} and \eqref{eq:aprox identity2},
\begin{equation*}
1 =  \norm{f_n}_2^2 =  \lim_{n\rightarrow \infty}  \int_{\R^m} \absbig{\widehat{f_n}(t)}^2 H_0(t)\,dt = H_0(t_0) \quad \mbox{for a.e. $t_0\in\R^m$,}
\end{equation*}
and \eqref{eq:cond} is satisfied for $\eta=0$. When $\eta\neq 0$, let
\begin{equation*}
H_\eta(t) = D(\La(\Om))\, \Phi(-p_2^*(\eta)) \sum_{k\in\mathcal{K}} \overline{\widehat{g_k}(t)}\, \widehat{h_k}(t-p_1^*(\eta))\,.
\end{equation*}
By polarization of \eqref{eq:identity2} we have
\begin{equation*}
\int_{\R^m} \widehat{f^1}(t) \overline{\widehat{f^2}(t-p_1^*(\eta))} H_\eta(t) = 0
\end{equation*}
for all $f^1,f^2\in\mathcal{D}$. By Cauchy-Schwarz inequality and \eqref{eq:g and h}, $H_\eta$ is locally integrable. We choose $f^1_n$ and $f^2_n$ such that
\begin{equation*}
\widehat{f^1_n}(t) = s_n(t_0-t) \quad \mbox{and} \quad \widehat{f^2_n}(t) = s_n(t_0-t+p_1^*(\eta))\,.
\end{equation*}
Then $\norm{f^1_n}_2=1$ and $\norm{f^2_n}_2=1$, and for each Lebesgue point $t_0\in\R^m$ of $H_\eta$, we have
\begin{align*}
0&=\lim_{n\rightarrow \infty}\int_{\R^m} \widehat{f^1_n}(t) \overline{\widehat{f^2_n}(t-p_1^*(\eta))}  H_\eta(t)\, dt= \lim_{n\rightarrow \infty}\int_{\R^m} s_n^2(t_0-t) H_\eta(t)\, dt\\
&= \lim_{n\rightarrow \infty}\big (s_n^2 \ast H_\eta \big )(t_0) = H_\eta(t_0)\,.
\end{align*}
Hence, \eqref{eq:cond} is satisfied for $\eta\neq 0$.

Conversely, assume that \eqref{eq:cond} holds. Since $\{T_\la g_k: k\in\mathcal{K}, \la\in\La(\Om)\}$ and $\{T_\la h_k:k\in\mathcal{K}, \la\in\La(\Om)\}$ be Bessel sequences in $\Lt(\R^m)$, the function $\mathcal{N}(x)$, as in Proposition~\ref{prop:limit function N}, is well defined. It coincides pointwise with its Fourier series 
\begin{equation*}
\mathcal{N}(x) = \sum_{\eta \in \Ga^*} \widehat{\mathcal{N}}(\eta) e^{-2\pi i x \cdot p_1^*(\eta)}\,,
\end{equation*}
for all $x\in\R^m$, and by \eqref{eq:cond}
\begin{equation*}
\widehat{\mathcal{N}}(\eta) = \delta_{\eta,0} \int_{\R^m} \widehat{f}(t) \overline{\widehat{f}(t-p_1^*(\eta))}\, dt\,.
\end{equation*}
Therefore, $\mathcal{N}$ is a constant function and $\mathcal{N}(x) = \norm{f}_2^2$ for all $x\in\R^m$. That means that $\{T_\la g_k: k\in\mathcal{K}, \la\in\La(\Om)-x\}$ and $\{T_\la h_k:k\in\mathcal{K}, \la\in\La(\Om)-x\}$ are  dual frames, for all $x\in\R^m$. Taking $x=0$ proves the claim.
\end{proof}


\section{Gabor Frames}\label{sec:gabor}

In this section we apply the developed results to Gabor frames. Let $\La(\Om)$ be a  simple generic model set, $\Om$ symmetric around the origin, and $\mathcal{K}=\mathcal{L}\times \Delta$, where $\mathcal{L}=\{1,\ldots,L\}$ is a finite index set and $\Delta$ a relatively dense and uniformly discrete set in $\R^m$. Let $g_k:= M_{\be} g_l$ for $k=(l,\beta)\in\mathcal{K}$ and $g_l\in W(\Lt,\ell^1)$. Then the system $\{ T_\la g_k:k\in\mathcal{K}, \la\in\La(\Om)\}$ is the Gabor system
\begin{equation}\label{eq:Gabor}
\mathcal{G}(g,\La(\Om),\Delta): = \{ T_\la M_{\be} g_l: l=1,\ldots, L, \la\in\La(\Om),\be\in\Delta\}\,.
\end{equation}

The assumption in Proposition~\ref{prop:limit function N}, that $\sum_{k\in\mathcal{K}}\abs{\widehat{g_k}(t)}^2$ should be bounded from above, which follows from the Bessel property of $\{ T_\la g_k:k\in\mathcal{K}, \la\in\La(\Om)\}$, is automatically satisfied for Gabor frames. Indeed, if $g_k =  M_{\be} g_l$ and $g_l\in W(\Lt,\ell^1)$, then
\begin{equation*}
\sum_{k\in\mathcal{K}}\abs{\widehat{g_k}(t)}^2 = \sum_{l=1}^L \sum_{\be\in\Delta}\abs{\widehat{g_l}(t-\be)}^2 \leq \sum_{l=1}^L \text{rel}(\Delta) \norm{\widehat{g_l}}_{\infty,2}^2 \leq L\, \text{rel}(\Delta) \, \max_{l=1,\ldots,L} \norm{\widehat{g_l}}_{\infty,2}^2
\end{equation*}
since $\widehat{g_l}\in W(L^\infty,\ell^2)$, and $\text{rel}(\Delta)$ is the relative separation of $\Delta$.

The following characterization of tight Gabor frames is a simple corollary of Theorem~\ref{thm:tight frame1}. In the case of $\La(\Om)$ being a lattice, they were obtained in \cite{RS97, CCJ99, Cz00, HLW02}. 

\begin{corollary}\label{col:tight gabor 1}
The system $\mathcal{G}(g,\La(\Om),\Delta)$ is a normalized tight Gabor frame for $\Lt(\R^m)$ if and only if
\begin{equation*}
D(\La(\Om))\, \Phi(-p_2^*(\eta)) \sum_{l=1}^L \sum_{\be\in\Delta}\overline{\widehat{g_l}(t-\beta)}\,\widehat{g_l}(t-\beta - p_1^*(\eta)) = \delta_{\eta,0} \quad \mbox{for a.e. $t\in\R^m$}\,,
\end{equation*}
for each $\eta\in \Ga^*$, where $\delta$ is the Kronecker delta and $\Phi$ defined in \eqref{eq:function phi}.
\end{corollary}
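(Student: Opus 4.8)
The plan is to read $\mathcal{G}(g,\La(\Om),\Delta)$ as the special case of the family $\{T_\la g_k:k\in\mathcal{K},\la\in\La(\Om)\}$ of Theorem~\ref{thm:tight frame1} in which $\mathcal{K}=\mathcal{L}\times\Delta$ and $g_k=M_\be g_l$ for $k=(l,\be)$, and then simply to translate the characterizing identity \eqref{eq:cond1} into the language of the generators $g_l$. The only genuine ingredient is the behaviour of the Fourier transform under modulation.

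First I would verify that the framework of Theorem~\ref{thm:tight frame1} applies. Since modulation by $\be$ multiplies a function by a unimodular character, it preserves the Wiener amalgam norm, so $g_k=M_\be g_l\in W(\Lt,\ell^1)$ whenever $g_l\in W(\Lt,\ell^1)$; and the standing boundedness hypothesis of Proposition~\ref{prop:limit function N}, namely that $\sum_{k\in\mathcal{K}}\abs{\widehat{g_k}(t)}^2$ is bounded above, is precisely the estimate carried out in the paragraph preceding the statement, where $\widehat{g_l}\in W(L^\infty,\ell^2)$ together with the relative separation of $\Delta$ furnishes a finite bound. Thus $\mathcal{G}(g,\La(\Om),\Delta)$ is covered by Theorem~\ref{thm:tight frame1}.

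Next I would compute $\widehat{g_k}$. From $M_\be f(t)=e^{2\pi i\be\cdot t}f(t)$ a change of variables yields $\widehat{M_\be f}(\om)=\widehat f(\om-\be)$, i.e. $\F M_\be=T_\be\F$, in agreement with the relation $\F T_\la=M_{-\la}\F$ of Section~\ref{sec:not}. Hence $\widehat{g_k}(t)=\widehat{g_l}(t-\be)$ for $k=(l,\be)$. Substituting this into the inner sum of \eqref{eq:cond1} and splitting $k=(l,\be)$ gives
\[
\sum_{k\in\mathcal{K}}\overline{\widehat{g_k}(t)}\,\widehat{g_k}(t-p_1^*(\eta))=\sum_{l=1}^L\sum_{\be\in\Delta}\overline{\widehat{g_l}(t-\be)}\,\widehat{g_l}(t-\be-p_1^*(\eta)),
\]
so that multiplying by $D(\La(\Om))\,\widehat{\Phi^2}(-p_2^*(\eta))$ turns \eqref{eq:cond1} into the asserted condition, and vice versa.

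I do not expect a real obstacle, as the result is a direct specialization of Theorem~\ref{thm:tight frame1}. The only points demanding attention are getting the direction of the frequency shift in $\widehat{M_\be g_l}$ correct and observing that, for fixed $t$, only finitely many $\be\in\Delta$ contribute to each product because $\widehat{g_l}$ lies in a Wiener amalgam space and $\Delta$ is uniformly discrete, which legitimises the rearrangement of the double sum.
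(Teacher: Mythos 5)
Your proposal is correct and follows essentially the same route as the paper: the paper's proof is exactly the specialization of Theorem~\ref{thm:tight frame1} via $g_k:=M_\be g_l$, the identity $\widehat{g_k}(t)=\widehat{g_l}(t-\be)$, and the resulting reindexing of the sum in \eqref{eq:cond1}. Your additional checks (preservation of $W(\Lt,\ell^1)$ under modulation and the upper bound on $\sum_{k\in\mathcal{K}}\abs{\widehat{g_k}(t)}^2$) mirror the remarks the paper makes immediately before stating the corollary, so nothing is missing and nothing differs in substance.
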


\begin{proof}
We apply Theorem~\ref{thm:tight frame1} with $g_k:= M_{\be} g_l$. Observe that $\widehat{g_k}(t)  = \widehat{g_l}(t-\beta)$ and the sum in the left hand side of \eqref{eq:cond1} becomes
\begin{equation*}
\sum_{k\in\mathcal{K}} \overline{\widehat{g_k}(t)}\, \widehat{g_k}(t-p_1^*(\eta)) = \sum_{l=1}^L \sum_{\be\in\Delta}\overline{\widehat{g_l}(t-\beta)}\, \widehat{g_l}(t-\beta - p_1^*(\eta))\,.
\end{equation*}
\end{proof}

For the Gabor systems $\mathcal{G}(g,\La(\Om),\Delta)$, condition \eqref{eq:rel2} of Theorem~\ref{thm:tight frame2} 
becomes 
\begin{equation}\label{eq:calderon}
D(\La(\Om))\, \sum_{l=1}^L \sum_{\be\in\Delta} \abs{\widehat{g_l}(t-\beta)}^2 = 1 \quad \mbox{for a.e. $t\in\R^m$}\,,
\end{equation}
and we obtain another characterization of tight Gabor frames 
\begin{corollary}
The system $\mathcal{G}(g,\La(\Om),\Delta)$ is a normalized tight Gabor frame for $\Lt(\R^m)$ if and only if it is a Bessel sequence with upper frame bound $B_g=1$ and \eqref{eq:calderon} holds.
\end{corollary}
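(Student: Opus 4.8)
The plan is to deduce this directly from Theorem~\ref{thm:tight frame2}, in exactly the same way Corollary~\ref{col:tight gabor 1} was deduced from Theorem~\ref{thm:tight frame1}. By definition the Gabor system $\mathcal{G}(g,\La(\Om),\Delta)$ is the family $\{T_\la g_k : k\in\mathcal{K}, \la\in\La(\Om)\}$ with $\mathcal{K}=\mathcal{L}\times\Delta$ and $g_k := M_\be g_l$ for $k=(l,\be)$. Hence the hypotheses and the conclusion of Theorem~\ref{thm:tight frame2} apply verbatim to this system, and the only work is to rewrite the spectral condition \eqref{eq:rel2} in terms of the single window $g_l$; the Bessel hypothesis with bound $B_g=1$ is literally the same statement under both descriptions of the system, so nothing need be translated there.

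The one computation required is the effect of modulation on the Fourier side: directly from the definition of $\F$ one has $\widehat{M_\be g_l}(t)=\widehat{g_l}(t-\be)$, which is the companion $\F M_\be = T_\be\F$ of the relation $\F T_\la = M_{-\la}\F$ recorded in Section~\ref{sec:not}. Consequently $\widehat{g_k}(t)=\widehat{g_l}(t-\be)$, and therefore
\begin{equation*}
\sum_{k\in\mathcal{K}} \absbig{\widehat{g_k}(t)}^2 = \sum_{l=1}^L \sum_{\be\in\Delta} \absbig{\widehat{g_l}(t-\be)}^2\,,
\end{equation*}
so that \eqref{eq:rel2} becomes precisely \eqref{eq:calderon}. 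Feeding this identification into Theorem~\ref{thm:tight frame2} yields the stated equivalence: $\mathcal{G}(g,\La(\Om),\Delta)$ is a normalized tight frame if and only if it is a Bessel sequence with $B_g=1$ and \eqref{eq:calderon} holds.

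I do not anticipate any genuine obstacle here, since all of the analytic content — the reduction to $f\in\mathcal{D}$, the bracket-product machinery, and the appeal to Theorem~\ref{thm:ap nonnegative} for the nonnegative almost periodic error function — is already carried out inside the proof of Theorem~\ref{thm:tight frame2}. The only point worth remarking is that the upper boundedness of $\sum_{k\in\mathcal{K}}\abs{\widehat{g_k}(t)}^2$, which underlies the applicability of Proposition~\ref{prop:limit function N} within that theorem, is automatic for Gabor systems with $g_l\in W(\Lt,\ell^1)$; this was verified immediately before the corollary using $\widehat{g_l}\in W(L^\infty,\ell^2)$ together with the relative separation of $\Delta$.
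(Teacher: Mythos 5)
Your proposal is correct and matches the paper's own (essentially immediate) argument: the paper likewise obtains this corollary by specializing Theorem~\ref{thm:tight frame2} to $g_k = M_\be g_l$, using $\widehat{g_k}(t)=\widehat{g_l}(t-\be)$ so that \eqref{eq:rel2} turns into \eqref{eq:calderon}, with the Bessel hypothesis carrying over verbatim. Your additional remark about the automatic upper bound on $\sum_{k\in\mathcal{K}}\abs{\widehat{g_k}(t)}^2$ for $g_l\in W(\Lt,\ell^1)$ is consistent with the observation the paper makes just before the Gabor corollaries.
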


The application of Theorem~\ref{thm:dual frame} to the Gabor system $\mathcal{G}(g,\La(\Om),\Delta)$ yields the following characterization of Gabor dual frames. 

\begin{corollary}\label{col:dual frames}
Let $h_l\in W(\Lt,\ell^1)$ for each $l\in\mathcal{L}$, and assume that $\mathcal{G}(g,\La(\Om),\Delta)$ and $\mathcal{G}(h,\La(\Om),\Delta)$ are Bessel sequences for $\Lt(\R^m)$. Then $\mathcal{G}(h,\La(\Om),\Delta)$ is a dual system to $\mathcal{G}(g,\La(\Om),\Delta)$ if and only if
\begin{equation*}
D(\La(\Om))\, \Phi(-p_2^*(\eta)) \sum_{l=1}^L \sum_{\be\in\Delta}\overline{\widehat{g_l}(t-\beta)}\, \widehat{h_l}(t-\beta - p_1^*(\eta)) = \delta_{\eta,0} \quad \mbox{for a.e. $t\in\R^m$}\,,
\end{equation*}
for each $\eta\in \Ga^*$, where $\delta$ is the Kronecker delta and $\Phi$ is defined in \eqref{eq:function phi}.
\end{corollary}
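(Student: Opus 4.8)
The plan is to derive this statement as a direct specialization of Theorem~\ref{thm:dual frame}, in exactly the way that Corollary~\ref{col:tight gabor 1} specializes Theorem~\ref{thm:tight frame1}. First I would identify the Gabor system $\G(g,\La(\Om),\Delta)$ with the family $\{T_\la g_k:k\in\mathcal{K},\la\in\La(\Om)\}$ for the index set $\mathcal{K}=\mathcal{L}\times\Delta$ and generators $g_k=M_\be g_l$ with $k=(l,\be)$, and similarly $h_k=M_\be h_l$. Since modulation does not change the modulus of a function, $\norm{M_\be g_l}_{2,1}=\norm{g_l}_{2,1}$, so each $g_k,h_k$ lies in $W(\Lt,\ell^1)$ as required by Theorem~\ref{thm:dual frame}. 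The two Bessel hypotheses of that theorem are precisely the Bessel assumptions of the corollary, and the boundedness of $\sum_{k\in\mathcal{K}}\abs{\widehat{g_k}(t)}^2$ used in Proposition~\ref{prop:limit function N} is automatic for Gabor systems, as observed in the paragraph preceding Corollary~\ref{col:tight gabor 1}.

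The only real computation is on the Fourier side. From $M_\be f(t)=e^{2\pi i\be\cdot t}f(t)$ one checks $\widehat{M_\be f}=T_\be\widehat{f}$, whence $\widehat{g_k}(t)=\widehat{g_l}(t-\be)$ and $\widehat{h_k}(t)=\widehat{h_l}(t-\be)$. Substituting these into the $k$-sum in condition \eqref{eq:cond} of Theorem~\ref{thm:dual frame} and resolving $k=(l,\be)$ into the double sum over $l\in\{1,\dots,L\}$ and $\be\in\Delta$ gives
\begin{equation*}
\sum_{k\in\mathcal{K}}\overline{\widehat{g_k}(t)}\,\widehat{h_k}(t-p_1^*(\eta))=\sum_{l=1}^L\sum_{\be\in\Delta}\overline{\widehat{g_l}(t-\be)}\,\widehat{h_l}(t-\be-p_1^*(\eta)).
\end{equation*}
Plugging this into \eqref{eq:cond} produces exactly the displayed condition of the corollary.

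With this substitution in hand, the biconditional of Theorem~\ref{thm:dual frame} transfers verbatim: the duality identity \eqref{eq:tight} for the pair $\G(g,\La(\Om),\Delta)$, $\G(h,\La(\Om),\Delta)$ holds if and only if the displayed equation holds for every $\eta\in\Ga^*$ and a.e.\ $t\in\R^m$. I do not anticipate any genuine obstacle; all the analytic weight is carried by Theorem~\ref{thm:dual frame}, and the argument runs parallel to the proof of Corollary~\ref{col:tight gabor 1}, with the second factor $\widehat{g_l}$ replaced by the dual window $\widehat{h_l}$. The only point demanding care is the bookkeeping in the index change $k\leftrightarrow(l,\be)$ together with the correct translation in the relation $\widehat{M_\be f}=T_\be\widehat{f}$.
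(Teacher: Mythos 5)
Your proposal is correct and follows exactly the route the paper intends: the paper gives no separate proof for this corollary, noting only that it is the application of Theorem~\ref{thm:dual frame} to $\mathcal{G}(g,\La(\Om),\Delta)$, which is precisely your specialization $g_k=M_\be g_l$, $h_k=M_\be h_l$ with $\widehat{g_k}(t)=\widehat{g_l}(t-\be)$, mirroring the paper's proof of Corollary~\ref{col:tight gabor 1}. Your additional checks (modulation invariance of the $W(\Lt,\ell^1)$ norm and the automatic boundedness of $\sum_k\abs{\widehat{g_k}}^2$) are correct and consistent with the paper's remarks preceding that corollary.
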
                                                                                                                                               

We note here, that if $\Delta$ is a lattice in $\R^m$, that is $\Delta = A\Z^m$, where $A\in GL_m(\R)$, then we can deduce a form of Wexler-Raz biorthogonality relations for Gabor systems of the form $\mathcal{G}(g,\La(\Om),A\Z^m)$, that is when translations are taken from model sets and modulations from a lattice. An analogous theorem for Gabor system on a lattice was found by Wexler and Raz \cite{WR90} and proved in \cite{J94,J95,DLL95,RS95}, as well as in \cite{HLW02}.

\begin{theorem}\label{thm:wexler raz}
Assume that $\mathcal{G}(g,\La(\Om), A\Z^m)$ and $\mathcal{G}(h,\La(\Om), A\Z^m)$ are Bessel sequences for $\Lt(\R^m)$. Then  $\mathcal{G}(h,\La(\Om),A\Z^m)$ is a dual system to $\mathcal{G}(g,\La(\Om),A\Z^m)$ if and only if
\begin{equation}\label{eq:W-R}
D(\La(\Om))\, \abs{\det A}^{-1} \, \Phi(-p_2^*(\eta))  \sum_{l=1}^L  \innerBig{T_{A^I v}\,M_{p_1^*(\eta)} \,h_l}{g_l}  = \delta_{\eta,0}\, \delta_{v,0}\,,
\end{equation}
for each $\eta\in \Ga^*$ and $v\in\Z^m$, where $A^I = (A^T)^{-1}$ and $\delta$ is the Kronecker delta and $\Phi$ defined in \eqref{eq:function phi}.
\end{theorem}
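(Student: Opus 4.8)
The plan is to derive \eqref{eq:W-R} from Corollary~\ref{col:dual frames} by a Fourier-analytic argument on the quotient torus $\R^m/A\Z^m$. Specializing Corollary~\ref{col:dual frames} to $\Delta = A\Z^m$ and writing $\be = Ak$, $k\in\Z^m$, the duality of $\mathcal{G}(h,\La(\Om),A\Z^m)$ to $\mathcal{G}(g,\La(\Om),A\Z^m)$ is equivalent to
\begin{equation*}
D(\La(\Om))\,\widehat{\Phi^2}(-p_2^*(\eta))\,G_\eta(t) = \delta_{\eta,0}\quad\text{for a.e. }t\in\R^m,\ \text{each }\eta\in\Ga^*,
\end{equation*}
where $G_\eta(t) := \sum_{l=1}^L\sum_{k\in\Z^m}\overline{\widehat{g_l}(t-Ak)}\,\widehat{h_l}(t-Ak-p_1^*(\eta))$. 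The function $G_\eta$ is manifestly $A\Z^m$-periodic, so the idea is to pass to its Fourier coefficients and read off the condition lattice-point by lattice-point.

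Since $A\Z^m$ has dual lattice $A^I\Z^m$ with $A^I=(A^T)^{-1}$ and a fundamental domain $F$ of volume $\abs{\det A}$, I would expand $G_\eta$ in the Fourier series $G_\eta(t) = \sum_{v\in\Z^m} c_v(\eta)\,e^{2\pi i A^I v\cdot t}$ with $c_v(\eta) = \abs{\det A}^{-1}\int_F G_\eta(t)\,e^{-2\pi i A^I v\cdot t}\,dt$. By uniqueness of Fourier series on the torus, the displayed condition---that $D(\La(\Om))\widehat{\Phi^2}(-p_2^*(\eta))\,G_\eta$ equal the \emph{constant} $\delta_{\eta,0}$---is equivalent to the vanishing of all its Fourier coefficients except the zeroth, which must equal $\delta_{\eta,0}$; that is,
\begin{equation*}
D(\La(\Om))\,\widehat{\Phi^2}(-p_2^*(\eta))\,c_v(\eta) = \delta_{\eta,0}\,\delta_{v,0}\quad\text{for all }v\in\Z^m,\ \eta\in\Ga^*.
\end{equation*}

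It then remains to identify $c_v(\eta)$ with the inner-product expression in \eqref{eq:W-R}. Here I would use the standard unfolding trick: because $A^I v\cdot Ak = v\cdot k\in\Z$, the exponential $e^{-2\pi i A^I v\cdot t}$ is itself $A\Z^m$-periodic, so the sum over $k\in\Z^m$ together with the integral over $F$ collapses to a single integral over $\R^m$,
\begin{equation*}
c_v(\eta) = \abs{\det A}^{-1}\sum_{l=1}^L\int_{\R^m}\overline{\widehat{g_l}(t)}\,\widehat{h_l}(t-p_1^*(\eta))\,e^{-2\pi i A^I v\cdot t}\,dt.
\end{equation*}
A change of variables $t\mapsto t+p_1^*(\eta)$ turns each summand into $\innerBig{\widehat{h_l}}{T_{-p_1^*(\eta)}M_{A^I v}\widehat{g_l}}$: the phase $e^{-2\pi i A^I v\cdot p_1^*(\eta)}$ produced by the substitution is precisely the one supplied by the modulation $M_{A^I v}$ inside $T_{-p_1^*(\eta)}M_{A^I v}\widehat{g_l}$, so the two expressions agree term by term. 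This gives $c_v(\eta) = \abs{\det A}^{-1}\sum_{l=1}^L\innerBig{\widehat{h_l}}{T_{-p_1^*(\eta)}M_{A^I v}\widehat{g_l}}$, and substituting into the coefficient condition yields \eqref{eq:W-R} exactly.

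The genuine work is not in the algebra but in justifying the Fourier-series manipulations. I must check that $G_\eta$ is a well-defined locally integrable $A\Z^m$-periodic function, so that uniqueness of Fourier series applies, and that the interchange of summation and integration in the unfolding is legitimate. Both follow from the Bessel hypotheses: by the remark after Proposition~\ref{prop:bessel} the sums $\sum_l\abs{\widehat{g_l}}^2$ and $\sum_l\abs{\widehat{h_l}}^2$ are bounded a.e., and since $g_l,h_l\in W(\Lt,\ell^1)$ their Fourier transforms lie in $W(L^\infty,\ell^2)$; Cauchy--Schwarz then gives absolute convergence of the series defining $G_\eta$ and of the unfolded integral, so Fubini applies. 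The main subtlety to watch is keeping the translation and modulation in the correct order, so that the inner product emerges as $T_{-p_1^*(\eta)}M_{A^I v}\widehat{g_l}$ and not its adjoint.
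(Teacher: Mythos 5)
Your proposal is correct and follows essentially the same route as the paper's proof: both reduce the statement to Corollary~\ref{col:dual frames} with $\Delta=A\Z^m$, observe that the resulting function of $t$ is $A\Z^m$-periodic and locally integrable, compute its Fourier coefficients on $\R^m/A\Z^m$ by the unfolding trick (justified via Cauchy--Schwarz and the Bessel bounds), and conclude by uniqueness of Fourier coefficients, identifying each coefficient with $\abs{\det A}^{-1}\sum_{l}\langle\widehat{h_l},T_{-p_1^*(\eta)}M_{A^I v}\,\widehat{g_l}\rangle$. The only cosmetic difference is that you keep the constant $D(\La(\Om))\,\widehat{\Phi^2}(-p_2^*(\eta))$ outside the periodic function rather than absorbing it, as the paper does.
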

                                                                                                                                                                           
\begin{proof}
Denote by $A^I = (A^T)^{-1}$. By Corollary~\ref{col:dual frames}, $\mathcal{G}(h,\La(\Om),A\Z^m)$ is a dual system to $\mathcal{G}(g,\La(\Om),A\Z^m)$ if and only if
\begin{equation}\label{eq:function F}
F_\eta(t) := C(\eta) \sum_{l=1}^L \sum_{n\in\Z^m}\overline{\widehat{g_l}(t-An)}\, \widehat{h_l}(t-An- p_1^*(\eta)) = \delta_{\eta,0} \quad \mbox{for a.e. $t\in\R^m$}\,,
\end{equation}
where $C(\eta) = D(\La(\Om)) \,\Phi(-p_2^*(\eta))$. It is clear that for each $\eta\in\Ga^*$, $F_\eta \in L^1(A[0,1]^m)$ and is periodic, that is $F_\eta(t+Au)=F_\eta(t)$ for every $u\in\Z^m$. Therefore, we can compute the Fourier coefficients $\widehat{F_\eta}(v)$, $v\in\Z^m$, of $F_\eta(t)$ for each $\eta\in\Ga^*$:
\begin{align*}
\widehat{F_\eta}(v) &=C(\eta) \abs{\det A}^{-1} \int_{A[0,1]^m}  \sum_{l=1}^L \sum_{n\in\Z^m} \overline{\widehat{g_l}(t-An)}\, \widehat{h_l}(t-An- p_1^*(\eta)) \, e^{-2\pi i A^Iv \cdot t}\,dt\\
&= C(\eta) \abs{\det A}^{-1} \sum_{l=1}^L \int_{A[0,1]^m} \sum_{n\in\Z^m} \overline{\widehat{g_l}(t-An)}\, \widehat{h_l}(t-An- p_1^*(\eta)) \, e^{-2\pi i A^Iv \cdot t}\,dt\\ 
&= C(\eta) \abs{\det A}^{-1} \sum_{l=1}^L \int_{R^m} \overline{\widehat{g_l}(t)}\, \widehat{h_l}(t- p_1^*(\eta)) \, e^{-2\pi i A^Iv \cdot t}\,dt\\
&= C(\eta) \abs{\det A}^{-1} \sum_{l=1}^L \innerBig{T_{p_1^*(\eta)}\, \widehat{h_l}}{M_{A^I v} \,\widehat{g_l}}\,,
\end{align*}
where the interchanging of summation and integration is justified since, by Cauchy-Schwarz inequality, the sum $\sum_{n\in\Z^m}\overline{\widehat{g_l}(t-An)}\, \widehat{h_l}(t-An- p_1^*(\eta))$ is absolutely convergent. 
Therefore, by \eqref{eq:function F}, if $\eta\neq 0$, then $F_\eta(t) = 0$, and by the uniqueness of Fourier coefficients, we must have $\widehat{F_\eta}(v) = 0$ for every $v\in\Z^m$. For $\eta=0$, we have $F_\eta(t) = 1$, and therefore $\widehat{F_\eta}(v) = 0$ for $v\neq 0$ and $\widehat{F_\eta}(v) = 1$ for $v=0$. Since $ \inner{T_{p_1^*(\eta)}\, \widehat{h_l}}{M_{A^I v} \,\widehat{g_l}} = \inner{T_{A^I v}\,M_{p_1^*(\eta)} \,h_l}{g_l}$, summarizing above computation, \eqref{eq:function F}, is equivalent to
\begin{equation*}
C(\eta) \abs{\det A}^{-1} \sum_{l=1}^L \innerBig{T_{A^I v}\,M_{p_1^*(\eta)} \,h_l}{g_l} = \delta_{\eta,0}\, \delta_{v,0}\,,
\end{equation*}
for each  $\eta\in\Ga^*$ and $v\in\Z^m$, completing the proof.
\end{proof}

Theorem~\ref{thm:wexler raz} implies the necessary condition for a Gabor system $\mathcal{G}(g,\La(\Om), A\Z^m)$ to be a frame. A similar result, using different methods and for more general sets, was obtained in \cite{grrost17}.

\begin{corollary}
Let $g\in W(\Lt,\ell^1)$, $\La(\Om)$ a simple generic model set, $\Om$ symmetric around the origin, and let $A\in GL_m(\R)$. If  $\{ T_\la M_{An} g:\la\in\La(\Om), n\in\Z^m\}$ is a frame for $\Lt(\R^m)$ that has, for some $h\in W(\Lt,\ell^1)$, a  dual of the form $\{ T_\la M_{An} h:\la\in\La(\Om), n\in\Z^m\}$, then $D(\La(\Om)) \leq \abs{\det A}$.
\end{corollary}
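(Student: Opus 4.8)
The plan is to reduce the statement to the single inequality $\inner{h}{g}\le 1$ and then to read that inequality off a trace computation for the Gram idempotent of the system. First I would specialize Theorem~\ref{thm:wexler raz} to the present situation, where $L=1$ and the two generators are $g$ and $h$. Evaluating the Wexler--Raz relation \eqref{eq:W-R} at $\eta=0$ and $v=0$, and using $p_1^*(0)=p_2^*(0)=0$, $A^I\cdot 0=0$ together with $\widehat{\Phi^2}(0)=1$, gives
\begin{equation*}
D(\La(\Om))\,\abs{\det A}^{-1}\,\inner{\widehat{h}}{\widehat{g}}=1,
\end{equation*}
so that by Plancherel $\inner{h}{g}=\abs{\det A}/D(\La(\Om))$, a positive real number. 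Hence the asserted bound $D(\La(\Om))\ge\abs{\det A}$ is exactly equivalent to $\inner{h}{g}\le 1$, and the whole problem is to prove this last inequality.

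Next I would package the two systems into analysis operators. Write $I=\La(\Om)\times\Z^m$ and let $C_g,C_h:\Lt(\R^m)\to\lt(I)$ be the analysis operators $C_g f=(\inner{f}{T_\la M_{An}g})_{(\la,n)}$ and likewise for $h$; both are bounded since the systems are Bessel. The duality hypothesis means precisely $S_{g,h}^{\La}=C_h^*C_g=I$, and taking adjoints $C_g^*C_h=I$ as well. Therefore $P:=C_gC_h^*$ is a bounded idempotent on $\lt(I)$ whose range equals the range of $C_g$. Because the time--frequency shifts $T_\la M_{An}$ are unitary, every diagonal entry of $P$ is the same:
\begin{equation*}
P_{(\la,n),(\la,n)}=\inner{T_\la M_{An}h}{T_\la M_{An}g}=\inner{h}{g}.
\end{equation*}
Thus $\inner{h}{g}$ is the common diagonal value of a nonzero idempotent, and the claim $\inner{h}{g}\le 1$ is the assertion that this value --- the reciprocal of the redundancy --- does not exceed $1$.

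To extract that bound I would introduce a normalized (dimension) trace. Let $I_R=\big(\La(\Om)\cap B(0,R)\big)\times\{n\in\Z^m:An\in B(0,R)\}$ and set $\tau(X)=\lim_{R\to\infty}\abs{I_R}^{-1}\sum_{j\in I_R}X_{jj}$. By equidistribution of model sets one has $\abs{\La(\Om)\cap B(0,R)}\sim D(\La(\Om))R^m$ and $\abs{\{n:An\in B(0,R)\}}\sim\abs{\det A}^{-1}R^m$, so $\{I_R\}$ is a F\o lner sequence for $I$. Since the diagonal of $P$ is constant, $\tau(P)=\inner{h}{g}$. Let $\widetilde P$ be the orthogonal projection of $\lt(I)$ onto $\mathrm{range}(C_g)=\mathrm{range}(P)$. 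Its diagonal entries satisfy $\widetilde P_{jj}=\norm{\widetilde P e_j}_2^2\le 1$, hence $\tau(\widetilde P)\le 1$. Finally, because $P$ and $\widetilde P$ are idempotents onto the same subspace we have $\widetilde P P=P$ and $P\widetilde P=\widetilde P$, so the trace property $\tau(XY)=\tau(YX)$ yields
\begin{equation*}
\inner{h}{g}=\tau(P)=\tau(\widetilde P P)=\tau(P\widetilde P)=\tau(\widetilde P)\le 1,
\end{equation*}
which is the desired inequality.

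The part that needs real care --- and which I expect to be the main obstacle --- is justifying that $\tau$ is a well-defined trace with the cyclic property $\tau(XY)=\tau(YX)$ on the operators at hand. Convergence of the F\o lner averages and vanishing of the boundary contributions require off-diagonal decay of the Gram operators $C_gC_h^*$, $C_hC_g^*$ and of $\widetilde P$; this is exactly where the assumption $g,h\in W(\Lt,\ell^1)$ enters, since the Wiener-amalgam localization forces the matrix entries $\inner{T_{\la'}M_{An'}g}{T_\la M_{An}h}$ to be summable off the diagonal uniformly, by the same $\ell^1$-type estimates already used in the proof of Proposition~\ref{prop:ap bracket}. One must also invoke the covariance of Proposition~\ref{prop:operator shifts} and equidistribution over the hull $X(\La(\Om))$ to control the averages independently of the centre of the balls $B(0,R)$. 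Once the trace is in place, the cyclicity identity closes the argument immediately.
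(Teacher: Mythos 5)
Your opening step --- specializing Theorem~\ref{thm:wexler raz} to $L=1$ and $(\eta,v)=(0,0)$, using $\widehat{\Phi^2}(0)=1$ to get $\inner{h}{g}=\abs{\det A}/D(\La(\Om))$, and thereby reducing the claim to the single inequality $\inner{h}{g}\le 1$ --- is exactly the reduction the paper makes. Where you diverge is in proving $\inner{h}{g}\le 1$. The paper does this in two lines: it applies the mixed frame expansion to the pair $f_1=h$, $f_2=g$, identifies the resulting sum as $\sum_{\la,n}\abs{\inner{h}{T_\la M_{An}g}}^2$, and bounds it by the Bessel property of $\{T_\la M_{An}g\}$ after normalizing $\norm{h}_2^2=B_g^{-1}$. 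You instead set up a F\o lner-averaged trace $\tau$ on $\ell^2(\La(\Om)\times\Z^m)$ and compare the oblique idempotent $P=C_gC_h^*$ with the orthogonal projection $\widetilde P$ onto $\mathrm{range}(C_g)$. Your algebraic skeleton is sound: $P^2=P$, $\widetilde P P=P$, $P\widetilde P=\widetilde P$, the diagonal of $P$ is constantly $\inner{h}{g}$, and the diagonal of $\widetilde P$ lies in $[0,1]$.

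The genuine gap is the step you flag and then wave away: the cyclicity $\tau(XY)=\tau(YX)$ (indeed, even the existence of the limits defining $\tau(\widetilde P P)$ and $\tau(P\widetilde P)$). This is not a routine verification; it is the analytic core of density theorems proved by this dimension-counting method, and it requires uniform off-diagonal decay of the matrices being averaged, so that the boundary mismatch between the truncated sums is $o(\abs{I_R})$. Two concrete failures in your justification. First, the claimed source of decay is wrong: $g,h\in W(\Lt,\ell^1)$ controls localization only in the translation variable, while the entries $\inner{T_{\la'}M_{An'}g}{T_\la M_{An}h}$ are samples of a short-time Fourier transform of windows with no smoothness assumed, hence with no decay whatsoever guaranteed in the modulation offset $n-n'$; the uniform off-diagonal summability you invoke can simply fail. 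The appeal to the estimates in Proposition~\ref{prop:ap bracket} is misplaced: there the summability over $\Ga^*$ comes from $f\in\mathcal{D}$ having compactly supported Fourier transform, not from any property of the windows. Second, even granting decay for $P$, your argument equally needs it for $\widetilde P=C_g\big(C_g^*C_g\big)^{-1}C_g^*$, whose matrix involves the inverse frame operator; off-diagonal decay of such an inverse is a spectral-invariance (Wiener-lemma type) statement, and nothing of that kind is available for this non-lattice index set from the stated hypotheses. So, as written, the key inequality $\inner{h}{g}\le 1$ is not established, and repairing it along your lines would require importing substantially heavier machinery than the elementary frame-expansion argument the paper uses.
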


\begin{proof}
Let $\{ T_\la M_{An} g:\la\in\La(\Om), n\in\Z^m\}$  be a frame for $\Lt(\R^m)$ with  dual $\{ T_\la M_{An} h:\la\in\La(\Om), n\in\Z^m\}$, for some $h\in W(\Lt,\ell^1)$. Let $B_g$ be the upper frame bound of $\{ T_\la M_{An} g:\la\in\La(\Om), n\in\Z^m\}$, and we can assume without loss of generality that $\norm{h}_2^2 = B_g^{-1}$. Then we have the frame decomposition
\begin{equation*}
\inner{f_1}{f_2} = \sum_{\la\in\La(\Om)} \sum_{n\in\Z^m} \inner{f_1}{T_\la M_{An} g} \inner{T_\la M_{An} h}{f_2} \quad \mbox{for all $f_1,f_2\in\Lt(\R^m)$,}
\end{equation*} 
If we set $f_1=h$ and $f_2=g$, by the Bessel property of $\{ T_\la M_{An} g:\la\in\La(\Om), n\in\Z^m\}$, we obtain
\begin{equation*}
\inner{h}{g} = \sum_{\la\in\La(\Om)} \sum_{n\in\Z^m} \abs{\inner{h}{T_\la M_{An} g}}^2 \leq B_g \norm{h}_2^2 = 1\,.
\end{equation*}
On the other hand, by Theorem~\ref{thm:wexler raz} with $L=1$, $\inner{h}{g} = D(\La(\Om))\, \abs{\det A}^{-1}$, and therefore $D(\La(\Om)) \leq \abs{\det A}$.
\end{proof}


\section{Conclusions}\label{sec:con}

We have characterized tight and  dual frames of translates on simple model sets, and we have applied the results to semi-regular Gabor systems. 
Our approach relied heavily on the existence of Poisson summation formula for model sets and the connection between the latter and almost periodic functions. The kind of results, to our knowledge, are the first ones in that direction. In the followup paper we address Gabor frames on $\La(\Om) \subset R^m\times \R^m$. They were recently investigated in \cite{K16}, and, on more general relatively separated sets, in \cite{GOR15}, however we will aim at the characterization in terms of equations.

\section{Acknowledgment}
This research was supported by the  Austrian Science Fund, FWF Project (V312-N25).


\section*{References}
\bibliographystyle{abbrv}

\end{document}